\newtheorem{thm}{Theorem}[section]
\newtheorem{fact}[thm]{Fact}
\newtheorem{corol}[thm]{Corollary}
\newtheorem{lemma}[thm]{Lemma}
\newtheorem{prop}[thm]{Proposition}
\newtheorem{conj}[thm]{Conjecture}
\newtheorem{defi}[thm]{Definition}
\numberwithin{equation}{section}
\theoremstyle{remark}
\newtheorem{remark}[thm]{Remark}
\newtheorem{example}[thm]{Example}
\newtheorem{problem}[thm]{Problem}
\newcommand{\ben}{\begin{enumerate}}
\newcommand{\een}{\end{enumerate}}
\def\R {{\mathbb R}}
\def\Q {{\mathbb Q}}
\def\N{{\mathbb N}}
\def\T{{\mathbb T}}
\def\Z {{\mathbb Z}}
\def\End{\operatorname{End}}
\def\Aut{\operatorname{Aut}}
\begin{document}	
	\title[Algebraic entropy on strongly compactly covered groups]{Algebraic entropy on strongly compactly covered groups}
	\author[A. Giordano Bruno]{Anna Giordano Bruno}
	\address[A. Giordano Bruno]{\hfill\break
		Dipartimento di Matematica e Informatica
		\hfill\break
		Universit\`{a} di Udine
		\hfill\break
		Via delle Scienze  206, 33100 Udine
		\hfill\break
		Italy}
	\email{anna.giordanobruno@uniud.it} 
	\author[M. Shlossberg]{Menachem Shlossberg}
	\address[M. Shlossberg]{\hfill\break
		Dipartimento di Matematica e Informatica
		\hfill\break
		Universit\`{a} di Udine
		\hfill\break
		Via delle Scienze  206, 33100 Udine
		\hfill\break
		Italy}
	\email{menachem.shlossberg@uniud.it}
	\author[D. Toller]{Daniele Toller}
	\address[D. Toller]{\hfill\break
		Dipartimento di Matematica e Informatica
		\hfill\break
		Universit\`{a} di Udine
		\hfill\break
		Via delle Scienze  206, 33100 Udine
		\hfill\break
		Italy}
	\email{daniele.toller@uniud.it}

\maketitle


\begin{abstract}
We introduce a new class of locally compact groups, namely the strongly compactly covered groups, which are the Hausdorff topological groups $G$ such that every element of $G$ is contained in a compact open normal subgroup of $G$. For continuous endomorphisms $\phi:G\to G$ of these groups we compute the algebraic entropy and study its properties. Also an Addition Theorem is available under suitable conditions.
\end{abstract}

\section{Introduction}

The topological entropy for continuous selfmaps of compact spaces was introduced by Adler, Konheim and McAndrew \cite{AKM}, in analogy with the measure entropy studied in Ergodic Theory by Kolmogorov and Sinai. Later on, Hood \cite{hood} extended Bowen-Dinaburg's entropy (see \cite{B,Din}) to uniformly continuous selfmaps of uniform spaces. This notion of entropy coincides with the topological entropy from \cite{AKM} in the compact case, and it can be considered in particular for continuous endomorphisms $\phi$ of locally compact groups $G$ (see \cite{GBV}).

Also the algebraic entropy has its roots in the paper \cite{AKM}, where it was considered for endomorphisms of (discrete) abelian groups. Weiss \cite{W} studied this concept in the torsion case; in particular, he found the precise connection of the algebraic entropy to the topological entropy (and also to the measure entropy) by means of Pontryagin duality, via a so-called Bridge Theorem. For a recent fundamental paper on the algebraic entropy for endomorphisms of torsion abelian groups we refer the reader to \cite{DGSZ}.  
 
Peters \cite{Pet} defined the algebraic entropy differently with respect to Weiss and his definition was restricted to automorphisms of discrete abelian groups. Peters' entropy need not vanish on torsion-free abelian groups unlike the algebraic entropy from \cite{AKM}. At the same time, these two notions of algebraic entropy coincide on automorphisms of torsion abelian groups.  In \cite{DG2} Peters' definition was appropriately modified to introduce algebraic entropy for endomorphisms of discrete abelian groups and all the fundamental properties of algebraic entropy were extended to this general setting; moreover, a connection was found between the algebraic entropy and Lehmer's problem from Number Theory based on the so-called Algebraic Yuzvinski Formula (see \cite{GBV-yuz,GBV-yuzapp}). Later on, also the Bridge Theorem was extended to all discrete abelian groups in \cite{DG-bt}.

Peters \cite{Pet1} extended his definition of algebraic entropy from \cite{Pet} to topological automorphisms of  locally compact abelian groups, and Virili \cite{V} modified this definition (in a similar manner as in \cite{DG2}) to endomorphisms of locally compact abelian groups. A Bridge Theorem is available for topological automorphisms of locally compact abelian groups (see \cite{Pet1,V-BT}) and for continuous endomorphisms of compactly covered locally compact abelian groups (see \cite{DGB}). Recall that a topological group $G$ is called \emph{compactly covered} if each element of $G$ is contained in some compact subgroup of $G$. 

The commutativity of the groups in Virili's definition can be omitted as it was described in \cite{DG-islam}. We give now the general definition.

In this paper we consider always Hausdorff topological groups. For a topological group $G$, we denote by $\mathcal C(G)$ the family of all compact neighborhoods of the identity element $e_G$ of $G$, and by $\End (G)$ the set of continuous group endomorphisms of $G$. As usual we denote by $\N$ and $\N_+$ the natural numbers and the positive integers respectively.

Let $G$ be a locally compact group and  $\mu$ be a right Haar measure on $G$. For $\phi \in \End (G)$, a subset $U \subseteq G$, and $n\in\N_+$, the $n$-th $\phi$-trajectory of $U$ is
\[T_n(\phi,U) = U \cdot \phi(U)  \cdot\ldots\cdot  \phi^{n-1} (U).\] 
When $U \in \mathcal C(G)$, the subset $T_n(\phi,U)$ is also compact, so it has finite measure. 
The \emph{algebraic entropy of $\phi$ with respect to  $U$} is
\begin{equation}\label{def:H:alg}
H_{alg} (\phi, U) = \limsup_{n \to \infty} \frac{\log \mu (T_n(\phi,U))}{n},
\end{equation}
and it does not depend on the choice of the Haar measure $\mu$ on $G$.
The \emph{algebraic entropy of $\phi$} is 
\begin{equation*}
h_{alg}(\phi) = \sup \{ H_{alg} (\phi, U) : U \in \mathcal C(G) \}.
\end{equation*}
Note that $h_{alg}$ vanishes on compact groups.

\medskip
In this paper we study the algebraic entropy of continuous endomorphisms $\phi \in \End(G)$ of locally compact groups $G$ satisfying one of the equivalent conditions stated in Proposition \ref{cclca:structure} below; we call these groups \emph{strongly compactly covered}. 

First we recall that an \emph{FC-group} is a group in which every element has finitely many conjugates. It is known that a torsion group is an FC-group if and only if each of its finite subsets is contained in a finite normal subgroup (see \cite[14.5.8]{R}); for this reason the torsion FC-groups are also called {\it locally finite and normal}. In particular, torsion FC-groups are locally finite.

Given a topological group $G$, we denote by $\mathcal B(G)$ and $\mathcal N(G)$ the subfamilies of $\mathcal C(G)$ of all compact open subgroups and all compact open normal subgroups of $G$, respectively. Clearly, $\mathcal N(G) \subseteq \mathcal B(G) \subseteq \mathcal C(G)$.

\begin{prop}\label{cclca:structure}
Let $G$ be a topological group. Then the following conditions are equivalent (and define $G$ to be a strongly compactly covered group):
\ben
	\item  $G$ is a locally compact group and $\mathcal N(G)$ is cofinal in $\mathcal C(G)$;
	\item there exists $K \in \mathcal N(G)$ such that $G/K$ is a torsion FC-group;
	\item for every $g\in G$ there exists $N_g\in \mathcal N(G)$ such that $g\in N_g$.
\een
\end{prop}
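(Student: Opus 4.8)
The plan is to establish the cycle $(1)\Rightarrow(3)\Rightarrow(2)\Rightarrow(1)$, relying throughout on two elementary observations: an open subgroup of a compact group has finite index, and a finite product $N_1\cdots N_k$ of compact normal subgroups is again a compact normal subgroup, being the continuous image of the compact product $N_1\times\cdots\times N_k$ under multiplication. I will also use repeatedly that, since groups in $\mathcal N(G)$ are open, a quotient $G/K$ with $K\in\mathcal N(G)$ is discrete, so that its compact subsets are precisely its finite subsets.

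For $(1)\Rightarrow(3)$, fix $g\in G$ and start from any $C\in\mathcal C(G)$, which exists because $G$ is locally compact. The set $C\cup\{g\}$ is still a compact neighborhood of $e_G$, hence lies in $\mathcal C(G)$, and cofinality of $\mathcal N(G)$ produces $N_g\in\mathcal N(G)$ with $C\cup\{g\}\subseteq N_g$; in particular $g\in N_g$. For $(3)\Rightarrow(2)$, note first that $G$ is locally compact, since $e_G$ lies in some compact open $N_{e_G}\in\mathcal N(G)$, and fix $K:=N_{e_G}$. Given a finite subset of $G/K$, lift it to $g_1,\dots,g_k\in G$ and set $N:=N_{g_1}\cdots N_{g_k}$, a compact open normal subgroup of $G$ containing every $g_i$. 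Since $K$ is open and $N$ is compact, $N\cap K$ has finite index in $N$, so the image $NK/K\cong N/(N\cap K)$ is a \emph{finite} normal subgroup of $G/K$ containing the chosen finite set. Thus every finite subset of $G/K$ is contained in a finite normal subgroup, which by the characterization \cite[14.5.8]{R} recalled above means exactly that $G/K$ is a torsion FC-group.

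For $(2)\Rightarrow(1)$, local compactness is immediate from $K\in\mathcal N(G)$. Given $C\in\mathcal C(G)$, its image under the quotient map $\pi\colon G\to G/K$ is a compact, hence finite, subset of the discrete group $G/K$; by the locally-finite-and-normal property it lies in a finite normal subgroup $F\leq G/K$. Then $N:=\pi^{-1}(F)$ is normal, open (a union of finitely many cosets of the open subgroup $K$) and compact (a finite union of translates of the compact set $K$), and $C\subseteq N\in\mathcal N(G)$, which is the required cofinality.

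The only genuine subtlety I foresee lies in $(3)\Rightarrow(2)$. Condition $(3)$ supplies, a priori, a \emph{different} witness $N_g$ for each $g\in G$, whereas $(2)$ demands a single $K$ whose quotient simultaneously tames all elements. The reconciliation is exactly the passage to finite products together with the finiteness of the index $[N:N\cap K]$; once one sees that every finite family of witnesses can be absorbed into one compact open normal subgroup with finite image modulo $K$, the rest is bookkeeping.
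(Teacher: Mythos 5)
Your proof is correct; the only difference from the paper is that you traverse the cycle in the opposite direction. The paper proves $(1)\Rightarrow(2)\Rightarrow(3)\Rightarrow(1)$, while you prove $(1)\Rightarrow(3)\Rightarrow(2)\Rightarrow(1)$, so each implication you establish is formally different from the ones in the paper, but the toolkit is identical: finite products of compact open normal subgroups are again compact open normal, an open subgroup of a compact group has finite index, and the characterization of torsion FC-groups as ``locally finite and normal'' from \cite[14.5.8]{R}. Your decomposition makes $(1)\Rightarrow(3)$ essentially trivial (adjoin $g$ to a compact neighborhood of $e_G$ and apply cofinality), and it concentrates the content in the other two steps: your $(3)\Rightarrow(2)$ replaces the paper's cofinality argument in $(1)\Rightarrow(2)$ (absorbing $q^{-1}(X)$ into some $M\in\mathcal N(G)$) by taking the product $N_{g_1}\cdots N_{g_k}$ of the witnesses directly, and your $(2)\Rightarrow(1)$ pulls back a finite normal subgroup of $G/K$ containing the finite set $\pi(C)$ in one stroke, which in the paper is split between $(2)\Rightarrow(3)$ (pulling back a finite normal subgroup containing $\langle K,g\rangle/K$) and $(3)\Rightarrow(1)$ (covering $C$ by finitely many $N_{c_i}$ and multiplying them). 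Neither route is essentially shorter; yours has the mild aesthetic advantage that compactness of $C$ is invoked only once, in the quotient rather than in $G$.
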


Looking at condition (3) in Proposition \ref{cclca:structure}, it is clear that a strongly compactly covered group is locally compact and compactly covered.
In view of condition (2), a strongly compactly covered group can also be called compact-by-(discrete torsion FC-group).
In particular, compact groups are strongly compactly covered.

We prove Proposition \ref{cclca:structure} in Section \ref{sccg:section}, where we also discuss several properties of strongly compactly covered groups. In particular (see Lemma \ref{cofinality:discrete}), we have that the strongly compactly covered discrete groups are precisely the torsion FC-groups, while the compactly covered discrete groups are precisely the torsion groups. As a consequence, we see that the class of strongly compactly covered groups is strictly contained in that of locally compact compactly covered groups, as for example there exist torsion groups that are not FC-groups.

\smallskip

On the other hand, an abelian topological group $G$ is strongly compactly covered precisely when it is locally compact and compactly covered (see Corollary \ref{cor:cclca}); moreover, a locally compact abelian group is compactly covered precisely when its Pontryagin dual group is totally disconnected (see Remark \ref{cc-td}).
So, some examples of strongly compactly covered abelian groups are: 
\ben
	\item compact abelian groups;
	\item locally compact  torsion abelian  groups (in particular, finite abelian groups);
	\item the $p$-adic numbers $\Q_p$.
\een

Now recall the following property, that motivated us to study the algebraic entropy for the class of strongly compactly covered groups, since it is clear from the definition (see also Remark \ref{rem:cofin}) that when computing the algebraic entropy it suffices to consider any cofinal subfamily of $\mathcal C (G)$. If $G$ is a  topological abelian group, then $\mathcal B(G) = \mathcal N(G)$.

\begin{fact}\cite[Proposition 2.2]{DGB}\label{BG:cofinal}
If $G$ is a compactly covered locally compact abelian group, then $\mathcal B(G)$ is cofinal in $\mathcal C(G)$.
\end{fact}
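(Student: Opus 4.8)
The plan is to first secure a single compact open subgroup of $G$, and then to enlarge an arbitrary compact neighborhood of $e_G$ to a compact open subgroup by absorbing finitely many translates.

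First I would produce some $V\in\mathcal B(G)$. By the Pontryagin--van Kampen structure theorem, $G$ is topologically isomorphic to $\R^n\times G_1$, where $G_1$ admits a compact open subgroup $K$. Since $\R^n$ is torsion-free and its only compact subgroup is trivial, no element with a nonzero $\R^n$-coordinate can lie in a compact subgroup; as $G$ is compactly covered this forces $n=0$, so that $V:=K$ is a compact open subgroup of $G=G_1$. Alternatively, one may invoke Remark \ref{cc-td}: the dual $\widehat G$ is then totally disconnected, hence contains a compact open subgroup by van Dantzig's theorem, whose annihilator is a compact open subgroup of $G$.

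Next, I would fix an arbitrary $U\in\mathcal C(G)$. Since $V$ is open, the translates $\{g+V : g\in U\}$ form an open cover of the compact set $U$, so there are $g_1,\dots,g_k\in U$ with $U\subseteq\bigcup_{i=1}^k(g_i+V)$. Using that $G$ is compactly covered, I would choose for each $i$ a compact subgroup $C_i$ of $G$ with $g_i\in C_i$, and set
\[
B = V + C_1 + \cdots + C_k.
\]
Because $G$ is abelian, $B$ is a subgroup, and it is the image of the compact set $V\times C_1\times\cdots\times C_k$ under the continuous addition map, hence compact; since $B\supseteq V$ and $V$ is open, $B$ is open. Thus $B\in\mathcal B(G)$, and $U\subseteq\bigcup_{i=1}^k(g_i+V)\subseteq B$ because each $g_i\in C_i\subseteq B$ and $V\subseteq B$. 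This gives the cofinality of $\mathcal B(G)$ in $\mathcal C(G)$.

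The second step is routine once $V$ is in hand; the only real obstacle is the first step, namely guaranteeing the existence of at least one compact open subgroup. This is exactly where the hypothesis that $G$ is compactly covered (equivalently, that $\widehat G$ is totally disconnected) is indispensable, since a general locally compact abelian group, such as $\R$, need contain no compact open subgroup at all.
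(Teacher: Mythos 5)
Your proof is correct. Note that the paper does not prove this statement at all: it imports it as a Fact, citing \cite[Proposition 2.2]{DGB}, so there is no internal proof to compare against. Your two-step argument --- first extracting a single compact open subgroup $V$ (either from the structure theorem $G\cong\R^n\times G_1$ with $n=0$ forced by the compactly covered hypothesis, or dually via van Dantzig applied to the totally disconnected group $\widehat G$), and then absorbing a given $U\in\mathcal C(G)$ into the compact open subgroup $B=V+C_1+\cdots+C_k$ --- is a complete and standard proof of the cited result; the only cosmetic remark is that torsion-freeness of $\R^n$ plays no role (what matters is only that $\R^n$ has no nontrivial compact subgroups, so a compact subgroup of $G$ projects trivially to the $\R^n$ factor).
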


Generalizing the measure-free formula given in \cite{DG-islam} in the abelian case, in Lemma \ref{lem:conv} we prove that if $G$ is a locally compact group, then the algebraic entropy of $\phi\in \End(G)$ with respect to $U\in\mathcal N(G)$  is
 \[H_{alg}(\phi,U)=\lim_{n\to \infty}\frac{\log [T_n(\phi,U) :U] }{n}.\]
In case $G$ is strongly compactly covered, since $\mathcal N(G)$ is cofinal in $\mathcal C(G)$ by Proposition \ref{cclca:structure}(1), we obtain
\begin{equation} \label{eq:algnormal}
h_{alg}(\phi) = \sup \{ H_{alg} (\phi, U) : U \in \mathcal N(G) \}.
\end{equation}
These formulas allow for a simplified computation of the algebraic entropy for strongly compactly covered groups.

\medskip

In Section \ref{basic-sec} we study the basic properties of the algebraic entropy for continuous endomorphisms of strongly compactly covered groups: Invariance under conjugation, Logarithmic Law, weak Addition Theorem, Monotonicity for closed subgroups and Hausdorff quotients. 

We see that the algebraic entropy of the identity automorphism of a strongly compactly covered group is zero. This is not always true in the non-abelian case, in fact the identity automorphism of a finitely generated group of exponential growth has infinite algebraic entropy -- see \cite{DG-islam,GBSp}.

As a fundamental example we compute the algebraic entropy of the shifts, showing in particular that Invariance under inversion is not available in general for topological automorphisms $\phi$ of strongly compactly covered groups. Indeed, we find the precise relation between $h_{alg}(\phi)$ and $h_{alg}(\phi^{-1})$ by using the \emph{modulus of $\phi$} (see Proposition \ref{h:alg:phi:inverse}).

\medskip
The so-called Limit-free Formulas provide a way to simplify the computation of both the algebraic and the topological entropy in different contexts. For example, Yuzvinski \cite{Y} provided such a  formula for the algebraic entropy of endomorphisms of discrete torsion abelian groups. However, it turned out that  his formula holds true for injective endomorphisms \cite{DG-lf, DSV} even though it is not true in general (see \cite[Example 2.1]{DG-lf}).  
A general Limit-free Formula for the algebraic entropy of locally finite groups is given in \cite{DG-lf}, where also its counterpart for the topological entropy for totally disconnected compact groups is considered. A Limit-free Formula for the topological entropy for continuous endomorphisms of totally disconnected locally compact groups is given in \cite{GBV}, extending a result from \cite{GB}.

Inspired by \cite{CGB, GBV}, we find in Proposition \ref{prop:limit-free} a Limit-free Formula for the algebraic entropy that can be described as follows. For a locally compact group $G$, $\phi\in \End(G)$ and $U\in \mathcal N(G)$,  we prove that 
\[H_{alg} (\phi, U)=[U^- :\phi^{-1}U^-],\]  
where $U^-$ is the smallest (open) subgroup of $G$ containing $U$ and inversely $\phi$-invariant (i.e., satisfying $\phi^{-1}U^- \leq U^-$). 
Note that $U^-$ is defined in \cite{CGB} in the setting of locally linearly compact vector spaces, ``dualizing'' a similar construction used for the topological entropy in \cite{CGB,GBV}, which was inspired by ideas of Willis \cite{WIL}. 

\medskip
In Section \ref{BT-sec} we compare our formula stated above with the Limit-free Formula for the topological entropy of continuous endomorphisms of  totally disconnected locally compact groups obtained in \cite{GBV}. This allows us to produce an alternative proof (see Theorem \ref{BT}) for the Bridge Theorem  given in \cite{DGB}.

\medskip
The main property of entropy functions is the so-called Addition Theorem, that means additivity of the entropy function (see Equation \eqref{ATeq} below). It is also known as Yuzvinski's addition formula, since it was first proved by Yuzvinski for the topological entropy in the case of separable compact groups \cite{Y}. Later on, Bowen proved in \cite[Theorem 19]{B} a version of the Addition Theorem for compact metric spaces, while the general statement for compact groups is deduced from the metrizable case by Dikranjan and Sanchis in \cite[Theorem 8.3]{Dik+Manolo}. The Addition Theorem plays a fundamental role also in the Uniqueness Theorem for the topological entropy in the category of compact groups provided by Stoyanov \cite{St}.

The Addition Theorem for the topological entropy was recently extended to continuous endomorphisms of totally disconnected locally compact groups in \cite{GBV} under suitable assumptions, in particular it holds for topological automorphisms of totally disconnected locally compact groups.

A first Addition Theorem for the algebraic entropy was proved in \cite[Theorem 3.1]{DGSZ} for the class of discrete torsion abelian groups, and  was later generalized  to the class of discrete abelian groups in \cite[Theorem 1.1]{DG2}. Note that a discrete abelian group is strongly compactly covered precisely when it is torsion.

On the other hand, it is known \cite{GBSp} that the Addition Theorem does not hold in general for the algebraic entropy even for discrete solvable groups (while its validity for nilpotent groups is an open problem). This comes from the strict connection of the algebraic entropy with the group growth from Geometric Group Theory (see \cite{DG-islam,DG-pc,GBSp2}).

Let $G$ be a locally compact group, $\phi\in \End(G)$ and $H$ a closed normal $\phi$-invariant (i.e., satisfying $\phi(H)\leq H$)  subgroup of $G$. We say that the Addition Theorem holds for the algebraic entropy if 
\begin{equation}\label{ATeq}
h_{alg} (\phi) =h_{alg} (\bar \phi) + h_{alg} (\phi \restriction_H),
\end{equation}
where $\phi \restriction_H$ is the restriction of $\phi$ to $H$ and $\bar \phi : G/H \to G/H$ is the induced map on the quotient group.

We consider the following general problem. Note that it is not even known whether the Addition Theorem holds in general for locally compact abelian groups (see \cite{DG-islam,DGB}).

\begin{problem}
For which locally compact groups does the Addition Theorem hold?
\end{problem}

As our main result, we prove in Section \ref{sec:AT} that  the Addition Theorem holds for a strongly compactly covered group $G$  in case $H$ is a closed normal $\phi$-stable (i.e., $\phi(H)= H$) subgroup of $G$ with $\ker\phi\leq H$. 
In particular we find that the Addition Theorem holds for topological automorphisms of strongly compactly covered groups (see Corollary \ref{ATaut}).

In the discrete case this means that the Addition Theorem holds for automorphisms of torsion FC-groups (this is a first extension of \cite[Theorem 3.1]{DGSZ} to some non-abelian groups), and we conjecture that the same result can be extended to all locally finite groups (see Conjecture \ref{conj}).

Another consequence of our Addition Theorem (see Corollary \ref{c(G)}) is that, to compute the algebraic entropy of a topological automorphism $\phi$ of a strongly compactly covered group $G$, one can assume $G$ to be totally disconnected; indeed, in this case the connected component $c(G)$ of $G$ is $\phi$-stable and $h_{alg} (\phi) =h_{alg} (\bar \phi)$, where $\bar \phi : G/c(G) \to G/c(G)$ is a topological automorphism.

\section{Strongly compactly covered groups}\label{sccg:section}

We start this section by proving Proposition \ref{cclca:structure} which characterizes the strongly compactly covered groups. 

\begin{proof}[\bf Proof of Proposition \ref{cclca:structure}]
(1)$\Rightarrow$(2) Assume first that $G$ is a locally compact group with $\mathcal N(G)$  cofinal in $\mathcal C(G)$, and let $H\in \mathcal C(G)$. By the cofinality assumption, there exists $K\in \mathcal N(G)$ containing $H$. 
	
Let  $X$ be a finite subset of the discrete group $G/K$ containing the identity element, and consider the canonical projection $q:G\to G/K$. Then, $q^{-1} (X)\in \mathcal C(G)$, being a finite union of cosets of the compact open subgroup $K$, so there exists $M\in \mathcal N(G)$ such that $q^{-1} (X)\subseteq M$. It follows that $X\subseteq q(M)$, and $q(M)$ is a finite normal subgroup of $G/K$, being compact in $G/K$ discrete. This proves that $G/K$ is a torsion FC-group by Lemma \ref{cofinality:discrete}(3).
	
(2)$\Rightarrow$(3) Assume that there exists $K \in \mathcal N(G)$ such that $G/K$ is a torsion FC-group,  let $g \in G$  and $n \in \N_+$ be the order of $gK$ in $G/K$. Then $\langle g \rangle \cap K = \langle g^n \rangle$, so $\langle K,g \rangle /K \cong \langle g \rangle / \langle g^n \rangle$ is finite. As $G/K$ is a torsion FC-group there exists a finite normal subgroup $S$ of $G/K$ containing $\langle K,g \rangle /K$. It follows that $g\in N_g=q^{-1}(S)\in \mathcal N(G)$, where $q:G\to G/K$ is the canonical projection.	

(3)$\Rightarrow$(1) First of all, $G$ is locally compact  as $N_{e_G}\in \mathcal N(G)$.

	Now fix $C\in\mathcal C(G)$ and let us show that there exists $N\in \mathcal N(G)$ such that $C\subseteq N$. By our assumption for every $c\in C$ there exists $N_c\in \mathcal N(G)$ with $c\in N_c$. The compactness of $C$ implies that there exist finitely many $c_1, \ldots, c_n \in C$ such that 
	$C\subseteq \bigcup_{i=1}^n N_{c_i}\subseteq N_{c_1}\cdots N_{c_k}$. So take $N=N_{c_1}\cdots N_{c_k}\in \mathcal N(G)$.
\end{proof}

The following is a direct consequence of Proposition \ref{cclca:structure} for abelian groups. The implication (4)$\Rightarrow$(1) is Fact \ref{BG:cofinal}, while (3)$\Rightarrow$(4) is trivial.
 
\begin{corol}\label{cor:cclca} 
	Let $G$ be an abelian topological group. Then the following conditions are equivalent: \ben
	\item $G$ is a locally compact group and $\mathcal B(G)$ is cofinal in $\mathcal C(G)$;
	\item there exists $K \in \mathcal B(G)$ such that $G/K$ is  torsion;
	\item for every $g\in G$ there exists $N_g\in \mathcal B(G)$ such that $g\in N_g$;
	\item $G$ is a locally compact compactly covered group.
	\een
In particular, $G$ is strongly compactly covered if and only if $G$ is locally compact and compactly covered.
\end{corol}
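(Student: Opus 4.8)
The plan is to reduce everything to Proposition \ref{cclca:structure}, exploiting the fact that in an abelian group every subgroup is normal, so that $\mathcal B(G) = \mathcal N(G)$ (as already noted just before Fact \ref{BG:cofinal}). With this identification, conditions (1), (2) and (3) of the present corollary become verbatim the three conditions of Proposition \ref{cclca:structure}, once we observe that for an abelian group being a torsion FC-group is the same as being torsion: indeed, in an abelian group each element has exactly one conjugate (itself), so every abelian group is trivially an FC-group, and thus $G/K$ is a torsion FC-group exactly when it is torsion. Hence the equivalences (1)$\Leftrightarrow$(2)$\Leftrightarrow$(3) are immediate consequences of Proposition \ref{cclca:structure}.

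It then remains only to insert condition (4) into the chain. First I would record the implication (3)$\Rightarrow$(4): if every $g \in G$ lies in some compact open subgroup $N_g \in \mathcal B(G)$, then $G$ is locally compact (for $g = e_G$ the subgroup $N_{e_G}$ is a compact neighbourhood of the identity) and compactly covered (each $N_g$ is a compact subgroup containing $g$). This is the trivial implication mentioned in the text. Conversely, (4)$\Rightarrow$(1) is exactly the content of Fact \ref{BG:cofinal}, which guarantees that $\mathcal B(G)$ is cofinal in $\mathcal C(G)$ for a compactly covered locally compact abelian group. Combining these two implications with the already-established cycle (1)$\Leftrightarrow$(2)$\Leftrightarrow$(3) closes the loop $(1)\Rightarrow(2)\Rightarrow(3)\Rightarrow(4)\Rightarrow(1)$, so all four conditions are equivalent.

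For the concluding ``in particular'' assertion I would simply unwind the definition: $G$ is strongly compactly covered precisely when it satisfies condition (3) of Proposition \ref{cclca:structure}, which for abelian $G$ is condition (3) of the corollary; since this is equivalent to (4), that is, to $G$ being locally compact and compactly covered, the claim follows at once.

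Since all the substantive input---the cofinality in Fact \ref{BG:cofinal} and the structure theory in Proposition \ref{cclca:structure}---is already available, there is essentially no hard step: the corollary is a bookkeeping reduction to the non-abelian proposition together with the cited cofinality fact. The only point requiring a moment's care is the translation between ``torsion FC-group'' and ``torsion'' in the abelian setting, which I would make explicit to avoid any gap.
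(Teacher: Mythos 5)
Your proposal is correct and matches the paper's argument exactly: the paper likewise obtains (1)$\Leftrightarrow$(2)$\Leftrightarrow$(3) as the abelian specialization of Proposition \ref{cclca:structure} (using $\mathcal B(G)=\mathcal N(G)$ and the fact that abelian torsion groups are torsion FC-groups), notes that (3)$\Rightarrow$(4) is trivial, and cites Fact \ref{BG:cofinal} for (4)$\Rightarrow$(1). Your explicit remark on the ``torsion FC-group vs.\ torsion'' translation is a harmless extra precaution, not a deviation.
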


By Corollary \ref{cor:cclca}, a locally compact abelian group $G$ is compactly covered precisely when the family $\mathcal N(G)=\mathcal B(G)$ is cofinal in $\mathcal C(G)$.
It is worth noting that in case $G$ is non-abelian, $\mathcal B(G)$ need not be cofinal in $\mathcal C(G)$, even if $G$ is compactly covered. 
This follows for example from Lemma \ref{cofinality:discrete} below, taking into account that there exist (necessarily non-abelian) discrete torsion groups which are not locally finite.

For a discrete group $G$, note that: 
\ben
\item $\mathcal C(G) = \{X\subseteq G: X\ \text{finite}, e_G\in X \}$;
\item $\mathcal B(G) = \{H\leq G: H\ \text{finite} \}$;
\item $\mathcal N(G) = \{N\leq G: N\ \text{finite normal} \}$.
\een

\begin{lemma}\label{cofinality:discrete}
Let $G$ be a discrete  group. Then:
\ben
\item $G$ is compactly covered if and only if it is torsion;
	\item  $\mathcal B(G)$ is cofinal in $\mathcal C(G)$ if and only if  $G$ is locally finite;
	\item  $\mathcal N(G)$ is cofinal in $\mathcal C(G)$ (i.e., $G$ is strongly compactly covered) if and only if $G$ is a torsion FC-group.
\een
\end{lemma}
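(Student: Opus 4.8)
The plan is to reduce each of the three equivalences to the explicit descriptions of $\mathcal C(G)$, $\mathcal B(G)$ and $\mathcal N(G)$ in the discrete case recorded just before the statement, and then to translate the word ``cofinal'' into a condition on the subgroups generated by finite subsets. Throughout I use that in a discrete group a compact set is simply a finite set, so $\mathcal C(G)$ consists of the finite subsets containing $e_G$, while $\mathcal B(G)$ and $\mathcal N(G)$ consist of the finite subgroups, respectively finite normal subgroups.

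First I would settle (1), where no cofinality is involved: by definition $G$ is compactly covered precisely when every $g\in G$ lies in a finite subgroup. If this holds then $\langle g\rangle$ is finite, so $g$ has finite order and $G$ is torsion; conversely, if $G$ is torsion then $\langle g\rangle$ itself is a finite subgroup containing $g$.

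For (2), cofinality of $\mathcal B(G)$ in $\mathcal C(G)$ says exactly that every finite $X\ni e_G$ is contained in a finite subgroup $H$. Since $X\subseteq H$ forces $\langle X\rangle\leq H$ to be finite, and conversely $\langle X\rangle$ itself witnesses cofinality whenever it is finite, this is equivalent to $\langle X\rangle$ being finite for every finite $X$, i.e.\ to local finiteness of $G$. (Requiring $e_G\in X$ is harmless, since adjoining $e_G$ alters neither the generated subgroup nor its finiteness.)

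Part (3) is where the only genuine input enters, namely the cited characterization \cite[14.5.8]{R}: a \emph{torsion} group is an FC-group if and only if each of its finite subsets lies in a finite normal subgroup. Unwinding the definition just as in (2), cofinality of $\mathcal N(G)$ in $\mathcal C(G)$ says precisely that every finite subset is contained in a finite normal subgroup, so the converse implication is immediate from \cite[14.5.8]{R}. For the forward implication the point requiring care---and the main (if modest) obstacle---is that \cite[14.5.8]{R} is stated only for torsion groups, so I must first extract torsion-ness: from the nesting $\mathcal N(G)\subseteq\mathcal B(G)\subseteq\mathcal C(G)$, cofinality of $\mathcal N(G)$ yields cofinality of $\mathcal B(G)$, hence $G$ is locally finite---in particular torsion---by part (2), and only then does \cite[14.5.8]{R} apply to conclude that $G$ is an FC-group.
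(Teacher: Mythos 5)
Your proposal is correct and follows essentially the same route as the paper: all three parts are reduced to the explicit descriptions of $\mathcal C(G)$, $\mathcal B(G)$, $\mathcal N(G)$ in the discrete case, with (2) coming down to local finiteness via $\langle X\rangle$ and (3) coming down to the cited characterization \cite[14.5.8]{R} of torsion FC-groups as the locally finite and normal groups. Your extra care in (3) about first extracting torsion-ness (via cofinality of $\mathcal B(G)$ and part (2)) before invoking \cite[14.5.8]{R} is a small refinement the paper leaves implicit, but it is not a different argument.
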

\begin{proof}
(1) This equivalence follows from the definitions.

(2) Let $G$ be a locally finite group and $X\in \mathcal C(G)$. As $G$ is locally finite, the finite subset $X$ generates a finite subgroup $F$. Therefore, $X\subseteq F\in \mathcal B(G)$, and this proves the cofinality of $\mathcal B(G)$ in $\mathcal C(G)$.
	
Conversely, assume that $G$  is not locally finite. So, there exists a finite subset $F$ of $G$ which generates an infinite subgroup. Without loss of generality, we may assume that $e_G\in F,$  so that $F\in \mathcal C(G).$ On the other hand, no  finite subgroup $K$ of $G$ contains $F$.
	
(3) This immediately follows from the equivalence between the definition of locally finite and normal and the property of being a torsion FC-group (see \cite[14.5.8]{R}).
\end{proof}

As a consequence of the above lemma and in view of Proposition \ref{cclca:structure}, we obtain that the torsion FC-groups are the strongly compactly covered groups with respect to the discrete topology.

It is easy to see that the class of discrete torsion FC-groups contains all finite groups and  is stable under taking subgroups, quotients or forming direct sums. Now we extend this result to the larger class of strongly compactly covered groups.

\begin{lemma}\label{cor:cclcstable} 
The class of strongly compactly covered groups is stable under taking closed subgroups and Hausdorff quotients. 
\end{lemma}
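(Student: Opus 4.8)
The plan is to work throughout with the pointwise characterization in condition (3) of Proposition \ref{cclca:structure}, which is by far the most convenient of the three equivalent conditions for a stability argument: it reduces the verification to producing, for each chosen element, a single compact open normal subgroup containing it. Using condition (1) or (2) instead would force me to invoke the (known, but external) facts that torsion FC-groups are closed under subgroups and quotients, whereas condition (3) lets me build the required subgroups directly by elementary topological group theory.

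For the stability under closed subgroups, let $H \leq G$ be a closed subgroup of a strongly compactly covered group $G$; note that $H$ is automatically Hausdorff. Given $h \in H$, I would take $N_h \in \mathcal N(G)$ with $h \in N_h$, furnished by condition (3), and consider $N_h \cap H$. The key point is that $N_h \cap H \in \mathcal N(H)$: it is a subgroup of $H$ containing $h$; it is compact, being a closed subset of the compact set $N_h$; it is open in $H$ because $N_h$ is open in $G$; and it is normal in $H$, since the intersection of a normal subgroup of $G$ with $H$ is normal in $H$. Hence condition (3) holds for $H$.

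For the stability under Hausdorff quotients, let $N$ be a closed normal subgroup of $G$, so that the canonical projection $q \colon G \to G/N$ maps onto a Hausdorff group. Given $\bar g \in G/N$, I would write $\bar g = q(g)$ and pick $N_g \in \mathcal N(G)$ with $g \in N_g$, and then claim $q(N_g) \in \mathcal N(G/N)$. Indeed, $q(N_g)$ is a subgroup containing $\bar g$; it is compact as the continuous image of the compact set $N_g$; it is open because $q$ is an open map; and it is normal in $G/N$ as the image of a normal subgroup under a surjective homomorphism. Thus condition (3) holds for $G/N$ as well.

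There is no serious obstacle once one commits to condition (3): everything reduces to the standard facts that $q$ is continuous and open, that a closed subset of a compact set is compact, and that normality is preserved both under intersection with a subgroup and under surjective homomorphic images. The only mild subtlety worth stating explicitly is that the closedness hypothesis is exactly what is used in each case: closedness of $H$ guarantees the compactness of $N_h \cap H$, while closedness of $N$ guarantees the Hausdorffness of $G/N$ and hence that the compact open subgroup $q(N_g)$ is genuinely a member of $\mathcal C(G/N)$.
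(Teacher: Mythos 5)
Your proof is correct, but it follows a different branch of Proposition \ref{cclca:structure} than the paper does. The paper verifies condition (2): it fixes one $K \in \mathcal N(G)$ with $G/K$ a torsion FC-group, passes to $H \cap K$ (resp.\ $KH/H$), and then identifies $H/(H\cap K)$ with a subgroup of $G/K$ via $H/(H\cap K)\cong HK/K$ (resp.\ identifies $(G/H)/(KH/H)$ with a quotient of $G/K$ via the isomorphisms $(G/H)/(KH/H)\cong G/KH\cong (G/K)/(KH/K)$), invoking the fact that discrete torsion FC-groups are stable under subgroups and quotients. You instead verify condition (3) elementwise, producing for each $h\in H$ (resp.\ each $\bar g\in G/N$) the compact open normal subgroup $N_h\cap H$ (resp.\ $q(N_g)$). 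Your route is more self-contained and elementary: it needs only that compactness, openness and normality survive intersection with a closed subgroup and passage along the open surjection $q$, and it avoids both the isomorphism-theorem bookkeeping and the external closure properties of torsion FC-groups. What the paper's route buys in exchange is structural information: it exhibits a single witness ($H\cap K$, resp.\ $KH/H$) realizing the compact-by-(torsion FC) decomposition of the subgroup or quotient, rather than one subgroup per element. Your closing remarks about where closedness of $H$ and of $N$ are used are accurate.
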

\begin{proof}
Let $G$ be a strongly compactly covered group, and let $K \in \mathcal N(G)$ be such that $G/K$ is a torsion FC-group, by Proposition \ref{cclca:structure}. 

If $H$ is a closed subgroup of $G$, then $H\cap K \in \mathcal N(H)$.
Since $H/H\cap K\cong HK/K\leq G/K$ we deduce that $H/H\cap K$ is a torsion FC-group, so $H$ is strongly compactly covered by Proposition \ref{cclca:structure}.
 
Assuming that $H$ is also normal, one has $KH/H\in \mathcal N(G/H)$, being the image of $K$ in $G/H$. As the class of discrete torsion FC-groups is closed under taking quotients, we complete the proof using  the isomorphisms  $$(G/H)/ (KH/H)\cong G/KH\cong (G/K)/(KH/K)$$ and again applying Proposition \ref{cclca:structure}.
\end{proof}

Now we check that the class of strongly compactly covered groups is stable also under taking finite products. 

\begin{lemma}\label{lem:cofprod}
Let $G_1,G_2$ be strongly compactly covered groups. Then $$\mathcal F = \{U_1\times U_2: U_i\in \mathcal N(G_i)\}\subseteq \mathcal N(G_1\times G_2)$$ are cofinal subfamilies of $\mathcal C (G_1\times G_2)$.	

In particular, $G_1 \times G_2$ is a strongly compactly covered group.
\end{lemma}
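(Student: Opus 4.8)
The plan is to prove the stronger cofinality statement about $\mathcal F$ and to deduce everything else from it. First I would settle the easy inclusion $\mathcal F\subseteq\mathcal N(G_1\times G_2)$: for $U_i\in\mathcal N(G_i)$ the set $U_1\times U_2$ is a subgroup of $G_1\times G_2$, it is compact and open because each factor is, and it is normal since $(g_1,g_2)(U_1\times U_2)(g_1,g_2)^{-1}=g_1U_1g_1^{-1}\times g_2U_2g_2^{-1}=U_1\times U_2$. Hence $U_1\times U_2\in\mathcal N(G_1\times G_2)$, which gives the displayed chain $\mathcal F\subseteq\mathcal N(G_1\times G_2)\subseteq\mathcal C(G_1\times G_2)$.

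For cofinality I would fix an arbitrary $C\in\mathcal C(G_1\times G_2)$ and push it through the canonical projections $\pi_i\colon G_1\times G_2\to G_i$. Since $\pi_i$ is continuous, $\pi_i(C)$ is compact; the point that needs a little care is that $\pi_i(C)$ is in fact a \emph{neighborhood} of $e_{G_i}$, so that $\pi_i(C)\in\mathcal C(G_i)$. This holds because $C$, being a neighborhood of $(e_{G_1},e_{G_2})$, contains a basic open box $V_1\times V_2$ with $e_{G_i}\in V_i$ open, whence $\pi_i(C)\supseteq V_i$. Using that $G_i$ is strongly compactly covered, Proposition \ref{cclca:structure}(1) provides $U_i\in\mathcal N(G_i)$ with $\pi_i(C)\subseteq U_i$. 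The elementary inclusion $C\subseteq\pi_1(C)\times\pi_2(C)\subseteq U_1\times U_2$ then exhibits an element of $\mathcal F$ containing $C$, proving that $\mathcal F$ is cofinal in $\mathcal C(G_1\times G_2)$.

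Cofinality of $\mathcal N(G_1\times G_2)$ is then immediate from the chain above, since a family sitting between a cofinal subfamily and the whole of $\mathcal C(G_1\times G_2)$ is itself cofinal. Finally, for the ``in particular'' clause I would invoke Proposition \ref{cclca:structure}(1) once more. The family $\mathcal F$ is nonempty, because each $G_i$ is strongly compactly covered and hence $\mathcal N(G_i)\neq\emptyset$; thus $G_1\times G_2$ admits a compact open subgroup and is therefore locally compact. Combined with the cofinality of $\mathcal N(G_1\times G_2)$ just established, condition (1) of Proposition \ref{cclca:structure} is satisfied, so $G_1\times G_2$ is strongly compactly covered.

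The only genuinely delicate step is the verification that the projection of a compact neighborhood of the identity is again a compact neighborhood of the identity in each factor; everything else reduces to the factorwise behavior of compactness, openness and normality under finite products.
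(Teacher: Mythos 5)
Your proposal is correct and follows essentially the same route as the paper: project a compact neighborhood of the identity onto each factor, enlarge the projections to members of $\mathcal N(G_i)$ using the strong compact coverability of the factors, and take the product. The only difference is that you spell out details the paper leaves implicit (that $\pi_i(C)\in\mathcal C(G_i)$ via the open box, and that $U_1\times U_2\in\mathcal N(G_1\times G_2)$), which is fine.
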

\begin{proof}
The group $G_1 \times G_2$ is locally compact, and it suffices to prove that $\mathcal F$ is cofinal in $\mathcal C(G_1 \times G_2)$. 
For $i=1,2$ let $\pi_i:G_1\times G_2 \to G_i$ be the canonical projection. If $F\in \mathcal C(G_1\times G_2)$, then $\pi_i(F)\in \mathcal C(G_i)$ for $i=1,2$, so there is $U_i \in \mathcal N(G_i)$ for $i=1,2$ such that $F\subseteq \pi_1(F)\times \pi_2(F) \subseteq U_1 \times U_2$.
\end{proof}

\section{Algebraic entropy}

In the sequel, for a group $G$ and $\phi\in\End(G)$, if $U$ is a subgroup of $G$, we sometimes use the short notation $T_n$ for $T_n(\phi,U)$, and the equalities $T_n = U \phi ( T_{n-1} )$ and $T_n = T_{n-1} \phi^{n-1} (U)$. In the following lemma we study the latter equality when $U$ commutes with $\phi^n(U)$ for every $n \in \N_+$.
In particular, if $U$ is normal in $G$, we see that every $T_n$ is a subgroup of $G$, normal in $T_{n+1}$. 

\begin{lemma}\label{anna:lemma}
Let $U$ be a subgroup of a group $G$ and $\phi \in \End(G)$. If $\phi^n(U)U=U\phi^n(U)$ for every $n\in\N_+$, then:
\ben
\item for every $k,m\in\N$, $\phi^k(U)\phi^{m}(U)=\phi^m(U)\phi^{k}(U)$;
\item for every $n\in\N_+$, 
\[\phi^n(U)T_n=T_n\phi^n(U),\] 
so $T_{n}$ is a subgroup of $G$.
\een
If $U$ is normal in $G$ (so the above condition is satisfied), then $T_n$ is normal in $T=\bigcup_{k\in \N} T_k$ for every $n\in\N_+$.
\end{lemma}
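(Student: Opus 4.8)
The plan is to prove (1) first, since both (2) and the final normality claim rest on it. For (1), I would simply apply the endomorphism to the hypothesis. Fixing $k\le m$ (the case $k=m$ being trivial) and setting $n=m-k\ge 1$, the assumption gives $\phi^n(U)U=U\phi^n(U)$; applying $\phi^k$ to both sides and using that $\phi^k(AB)=\phi^k(A)\phi^k(B)$ for subsets $A,B$ (as $\phi$ is a homomorphism), I obtain $\phi^m(U)\phi^k(U)=\phi^k(U)\phi^m(U)$, which is (1).

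For (2), I would first establish the set equality $\phi^n(U)T_n=T_n\phi^n(U)$ purely at the level of subsets: writing $T_n=\phi^0(U)\phi^1(U)\cdots\phi^{n-1}(U)$ and using (1) to slide the block $\phi^n(U)$ past each factor $\phi^i(U)$ (with which it commutes, by (1)), I move it from the left end to the right end. This argument does not presuppose that $T_n$ is a subgroup. To see that each $T_n$ \emph{is} a subgroup I would then induct on $n$: $T_1=U$ is a subgroup, and $T_n=T_{n-1}\,\phi^{n-1}(U)$ is the product of the two subgroups $T_{n-1}$ (a subgroup by the inductive hypothesis) and $\phi^{n-1}(U)$ (the image of the subgroup $U$ under $\phi^{n-1}$), which commute by the identity just proved at index $n-1$; the product of two commuting subgroups is a subgroup.

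For the final statement, first note that if $U$ is normal in $G$ then $U\phi^n(U)=\phi^n(U)U$ for every $n$, so the hypothesis holds and (1),(2) apply; moreover $e_G\in\phi^k(U)$ gives $T_k\subseteq T_{k+1}$, so $T=\bigcup_k T_k$ is an increasing union of subgroups, hence a subgroup. Since $T$ is generated by $\{\phi^i(U):i\ge 0\}$ and each $\phi^i(U)$ is a subgroup, it suffices to show that every $\phi^i(U)$ normalizes the subgroup $T_n$. The key elementary fact, coming from the normality of $U$ in $G$, is that $uVu^{-1}\subseteq UV$ for every $u\in U$ and every subset $V\subseteq G$ (because $uxu^{-1}x^{-1}\in U$). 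Writing a generic $x=\phi^i(u)\in\phi^i(U)$ and conjugating a factor $\phi^j(U)$ of $T_n$, I would split into two cases: if $j\le i$, then $x=\phi^j(\phi^{i-j}(u))$ and, since $U$ is normal, $x\phi^j(U)x^{-1}=\phi^j(\phi^{i-j}(u)\,U\,\phi^{i-j}(u)^{-1})=\phi^j(U)$; if $j>i$, then $x\phi^j(U)x^{-1}=\phi^i(u\,\phi^{j-i}(U)\,u^{-1})\subseteq\phi^i(U\phi^{j-i}(U))=\phi^i(U)\phi^j(U)$ by the key fact. Conjugating the whole product $T_n=\phi^0(U)\cdots\phi^{n-1}(U)$ factorwise and then reorganizing the extra $\phi^i(U)$-blocks using the commutativity from (1) (together with $\phi^i(U)\phi^i(U)=\phi^i(U)$) yields $xT_nx^{-1}\subseteq T_n$; applying this also to $x^{-1}\in\phi^i(U)$ gives the reverse inclusion, hence equality. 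Thus every generator of $T$ normalizes $T_n$, so $T_n$ is normal in $T$.

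I expect the normality claim to be the main obstacle. Part (1) yields only set-theoretic commutation of the blocks $\phi^i(U)$, which is \emph{not} enough to conclude that one of them normalizes another; the genuine input is the normality of $U$ in $G$, exploited through the commutator containment $uVu^{-1}\subseteq UV$, together with the careful bookkeeping of the two cases $j\le i$ and $j>i$ and the subsequent reorganization of the resulting products via (1).
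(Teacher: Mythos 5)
Your proof is correct, and parts (1) and (2) essentially match the paper's (the paper proves (2) by induction via the recursion $T_{n+1}=U\phi(T_n)$, whereas you slide the block $\phi^n(U)$ through the factorization $T_n=\phi^0(U)\cdots\phi^{n-1}(U)$ directly; this is a cosmetic difference). Where you genuinely diverge is the normality claim. The paper fixes $m$, writes $T_{n+m}=T_n\phi^n(T_m)$, and proves by induction on $n$ that each $\phi^n(u)$ with $u\in T_m$ commutes with $T_n$, the base case being the normality of $T_1=U$ in $G$ and the inductive step again exploiting $T_n=U\phi(T_{n-1})$. You instead reduce to the generators $\phi^i(U)$ of $T$ and conjugate $T_n$ factor by factor, with the commutator containment $uVu^{-1}\subseteq UV$ (valid since $U\unlhd G$) handling the case $j>i$ and exact invariance $\phi^j(gUg^{-1})=\phi^j(U)$ handling $j\le i$, then reabsorbing the extra $\phi^i(U)$-blocks into $T_n$ via (1); the observation that any such extra block has $i\le n-2$, hence is already a factor of $T_n$, is the point that makes the absorption work, and passing to $x^{-1}$ upgrades the inclusion to equality. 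The paper's induction is shorter and hides the bookkeeping inside the recursive structure of $T_n$; your argument is longer but makes completely explicit the single place where normality of $U$ in $G$ (as opposed to mere set-theoretic commutation of the blocks, which you rightly note is insufficient) enters the proof. Both are complete and correct.
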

\begin{proof}
(1) Let $k,m\in\N$ and assume that $k\geq m$. Letting $n=k-m\geq0$ we get $\phi^n(U)U=U\phi^n(U)$ by the hypothesis, hence $\phi^k(U)\phi^m(U)=\phi^m(\phi^n(U)U)=\phi^m(U\phi^n(U))=\phi^m(U)\phi^k(U)$.

(2) Clearly, $T_1 = U \leq G$, and $\phi(U)U=U\phi(U)=T_2$, so $T_2\leq G$.
We proceed by induction on $n\in\N_+$. Let $n\geq2$ and assume the inductive hypothesis $T_{n-1} \phi^{n-1}(U) = \phi^{n-1}(U) T_{n-1}$. Then
\begin{equation*}
\begin{split}
T_n\phi^n(U) = T_{n+1} = U\phi(T_n) = U \phi(T_{n-1}\phi^{n-1}(U)) = U\phi(\phi^{n-1}(U)T_{n-1}) = \\ 
= U\phi^n(U)\phi(T_{n-1}) = \phi^n(U) U \phi(T_{n-1}) = \phi^n(U)T_n.
\end{split}
\end{equation*}
Therefore, $T_{n+1} = T_n\phi^n(U) \leq G$.

Now we assume that $U$ is a normal subgroup of $G$, and we prove that $T_n$ is normal in $T$, checking that  $T_n$ is normal in $T_{n+m}$ for every $m\in \N$. To this aim we fix $m\in \N$, we write $T_{n+m} = T_n\phi^n(T_m)$, and we prove by induction on $n$ that $\phi^n(T_m)$ normalizes $T_n$.

For $n = 1$ this follows from the normality of $T_1 = U$ in $G$. 
Let $n \geq 2$, and fix an element $\phi^n( u ) \in \phi^n(T_m)$. Write $T_n = U \phi(T_{n-1})$, and assume that $\phi^{n-1}( u ) T_{n-1} = T_{n-1} \phi^{n-1}(u)$ holds by the inductive hypothesis. Then
\begin{equation*}\begin{split}
\phi^n(u) T_n = \phi^n(u) U \phi(T_{n-1}) = U \phi^n( u ) \phi(T_{n-1}) = U \phi( \phi^{n-1}( u ) T_{n-1} ) = \\
= U \phi( T_{n-1} \phi^{n-1}( u ) ) = U \phi( T_{n-1} ) \phi^{n}( u ) = T_n \phi^{n}( u ).	\qedhere
\end{split}\end{equation*}
\end{proof}

We now prove that if $U \in \mathcal N(G)$, then one can avoid the use of the right Haar measure in the definition of $H_{alg} (\phi, U)$ given in Equation \eqref{def:H:alg}, and that the limit superior becomes a limit. This generalizes the measure-free formula given in \cite{DG-islam} in the abelian case.

\begin{lemma}\label{lem:conv}
Let $G$ be a locally compact group, $\phi \in \End(G)$ and $U \in \mathcal N(G)$. Then
\begin{equation*}
H_{alg} (\phi, U) = \lim_{n \to \infty} \frac{\log [T_n(\phi,U) :U] }{n}.
\end{equation*}
\end{lemma}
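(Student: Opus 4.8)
The plan is to first pass from the Haar measure to the index, and then to upgrade the limit superior in \eqref{def:H:alg} to a genuine limit by establishing monotonicity of the ``incremental'' indices. By Lemma~\ref{anna:lemma}, since $U\in\mathcal N(G)$ is normal, each trajectory $T_n=T_n(\phi,U)$ is a subgroup of $G$; it is compact (a finite product of the compact sets $\phi^k(U)$) and open (it contains the open subgroup $U=T_1$), and $T_n$ is normal in $T=\bigcup_k T_k$. In particular $U=T_1\trianglelefteq T_n\trianglelefteq T_{n+1}$ and, $U$ being open and the $T_n$ compact, all the indices used below are finite.

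First I would record the measure-free reduction. Since $T_n$ is a disjoint union of $[T_n:U]$ cosets of $U$ and the Haar measure is translation invariant (here the normality of $U$ lets me ignore the left/right distinction), one has $\mu(T_n)=[T_n:U]\,\mu(U)$. Taking logarithms and dividing by $n$,
\[
\frac{\log\mu(T_n)}{n}=\frac{\log[T_n:U]}{n}+\frac{\log\mu(U)}{n},
\]
where the last term tends to $0$. Hence it suffices to prove that $\lim_n \frac{\log[T_n:U]}{n}$ exists, as this will force the limit superior defining $H_{alg}(\phi,U)$ to be a limit with the same value.

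The heart of the argument is to analyze the factor indices $\gamma_n:=[T_{n+1}:T_n]$. Using the recursion $T_{k+1}=T_k\,\phi^k(U)$ one checks that $\phi(T_k)\subseteq T_{k+1}$ for every $k$, so $\phi$ induces a homomorphism $\bar\phi_n\colon T_{n+1}/T_n\to T_{n+2}/T_{n+1}$. I would then show $\bar\phi_n$ is surjective: since $T_{n+2}=T_{n+1}\,\phi^{n+1}(U)$, every coset of $T_{n+1}$ in $T_{n+2}$ is represented by some $\phi^{n+1}(u)=\phi(\phi^n(u))$ with $u\in U$, and $\phi^n(u)\in\phi^n(U)\subseteq T_{n+1}$, whence $\bar\phi_n(\phi^n(u)T_n)=\phi^{n+1}(u)T_{n+1}$. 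Surjectivity gives $\gamma_{n+1}\le\gamma_n$, so $(\gamma_n)$ is a non-increasing sequence of positive integers and therefore eventually constant, say equal to $\gamma$.

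Finally, multiplicativity of the index along the chain $U=T_1\le T_2\le\cdots\le T_n$ yields $[T_n:U]=\prod_{k=1}^{n-1}\gamma_k$, so $\frac{1}{n}\log[T_n:U]=\frac{1}{n}\sum_{k=1}^{n-1}\log\gamma_k$. Since $\log\gamma_k\to\log\gamma$, the Cesàro mean converges to $\log\gamma$, which is the desired limit. The main obstacle — and the only place the trajectory structure is used in an essential way — is the surjectivity of $\bar\phi_n$ yielding the monotonicity $\gamma_{n+1}\le\gamma_n$; the measure reduction and the Cesàro step are routine. A minor point to treat with care is the non-abelian bookkeeping (normality of $T_n$ in $T_{n+1}$ and the inclusion $\phi^n(U)\subseteq T_{n+1}$), all of which is supplied by Lemma~\ref{anna:lemma}.
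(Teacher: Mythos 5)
Your proof is correct and follows essentially the same route as the paper: both reduce to the index via $\mu(T_n)=[T_n:U]\,\mu(U)$ and then show that the incremental indices $[T_{n+1}:T_n]$ are non-increasing, hence eventually constant. The paper gets this monotonicity from the chain $[T_{n+1}:T_n]\geq[\phi(T_{n+1}):\phi(T_n)]\geq[U\phi(T_{n+1}):U\phi(T_n)]=[T_{n+2}:T_{n+1}]$, which encodes the same surjection you construct explicitly on the quotients, and it concludes from the stabilization $t_n=\beta^{\,n-n_0}t_{n_0}$ rather than a Ces\`aro average; these differences are purely cosmetic.
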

\begin{proof}
If $U \in \mathcal N(G)$, then $T_n = T_n(\phi, U) \in \mathcal B(G)$ by Lemma \ref{anna:lemma}(1), and in particular the index $[T_n:U]$ is finite. Using the fact that $T_n$ is a disjoint union of cosets of $U$, and the properties of $\mu$, we obtain $\mu (T_n) = [T_n:U] \mu (U)$, so that $\log \mu (T_n) = \log [T_n:U] + \log \mu (U)$. As $\log \mu (U)$ does not depend on $n$, passing to the limit superior for $n \to \infty$ we obtain
\[H_{alg} (\phi, U) = \limsup_{n \to \infty} \frac{\log [T_n(\phi,U) :U] }{n}.\]

If $t_n =[T_n:U]$ then $t_n$ divides $t_{n+1}$, as $U \leq T_n \leq T_{n+1}$, and let $\beta_n = t_{n+1}/t_n = [T_{n+1}:T_n]$. Now we show that the sequence of integers $\{ \beta_n \}_{n \geq 1}$ is weakly decreasing. Indeed,
\[\beta_n = [T_{n+1}:T_n] \geq [\phi(T_{n+1}): \phi (T_n)] \geq [U\phi(T_{n+1}): U\phi (T_n)] = [T_{n+2}: T_{n+1}] = \beta_{n+1}.\]
In particular, $\{ \beta_n \}_{n \geq 1}$ stabilizes, so let $n_0 \in \N$, and $\beta \in \N$ be such that for every $n \geq n_0$ we have $\beta_n = \beta$, i.e. $t_n = \beta^{n-n_0} t_{n_0}$. 

Then, \begin{equation}\label{eq:fir}
H_{alg} (\phi, U) = \limsup_{n \to \infty} \frac{\log \beta^{n-n_0} t_{n_0} }{n} = \log \beta = 
\lim_{n \to \infty} \frac{\log t_n }{n}. \qedhere
\end{equation}
\end{proof}

\begin{remark}\label{rem:cofin}
If $G$ is a locally compact group, and $\phi \in \End(G)$, then the assignment $H_{alg} (\phi, - )$ is monotone.  Indeed, if $U \subseteq V$ are elements of $\mathcal C(G)$, then $T_n (\phi, U) \subseteq T_n( \phi, V)$, so $H_{alg}(\phi, U) \leq H_{alg}( \phi, V)$.
So,
\begin{equation*}
h_{alg}(\phi) = \sup \{ H_{alg} (\phi, U) : U \in \mathcal B \}
\end{equation*}
for every cofinal subfamily $\mathcal B$ of $\mathcal C(G)$.
\end{remark} 

If $G$ is a strongly compactly covered group, then by Remark \ref{rem:cofin} we obtain Equation \eqref{eq:algnormal}.
The latter generalizes \cite[Theorem 2.3(b)]{DGB}, which deals with the abelian case.

\medskip
Note that an equivalent formulation of Lemma \ref{cor:cclcstable} is that $\mathcal N(H)$ is cofinal in $\mathcal C(H)$ (respectively, $\mathcal N(G/H)$ is cofinal in $\mathcal C(G/H)$)  when $H$ is a closed (respectively, closed and normal) subgroup of $G$. The following lemma presents smaller cofinal subfamilies of $\mathcal C(H)$ and $\mathcal C(G/H)$ in the same setting.
It has a key role in proving Theorem \ref{addthm} in view of Remark \ref{rem:cofin}. 

\begin{lemma}\label{cofinal:families}
Let $G$ be a strongly compactly covered group, and let $H$ be a closed subgroup of $G$.  Then:
	\ben
	\item  the family $\mathcal N_G(H)=\{U \cap H : U \in \mathcal N(G) \}$ is cofinal in $\mathcal C(H)$;
	\item  if $H$ is also normal in $G$, then the family $\mathcal N_G(G/H)=\{\pi U : U \in \mathcal N(G) \}$ is cofinal in $\mathcal C(G/H)$, where $\pi: G \to G/H$ is the canonical projection.
	\een
\end{lemma}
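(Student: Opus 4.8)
The plan is to handle the two statements in parallel: first I would check that the two candidate families actually consist of compact open normal subgroups, so that $\mathcal N_G(H)\subseteq\mathcal N(H)$ and $\mathcal N_G(G/H)\subseteq\mathcal N(G/H)$, and only then verify cofinality. The basic tool I intend to use repeatedly is the following consequence of Proposition \ref{cclca:structure}(3), already implicit in its proof: for every compact subset $C$ of $G$ there exists $U\in\mathcal N(G)$ with $C\subseteq U$. Indeed, choosing for each $c\in C$ some $N_c\in\mathcal N(G)$ with $c\in N_c$ and extracting a finite subcover $C\subseteq\bigcup_{i=1}^k N_{c_i}$, the product $U=N_{c_1}\cdots N_{c_k}$ is again a compact open normal subgroup of $G$ containing $C$ (the product of compact open normal subgroups being compact, open and normal).

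For the preliminary checks, if $U\in\mathcal N(G)$ then $U\cap H$ is a subgroup of $H$, open in $H$ (as $U$ is open in $G$), compact (as $U$ is compact and $H$ is closed), and normal in $H$, since for $h\in H$ conjugation by $h$ fixes $U$ (normal in $G$) and fixes $H$, hence fixes $U\cap H$; thus $\mathcal N_G(H)\subseteq\mathcal N(H)$. Similarly, since $\pi$ is a continuous open surjective homomorphism, $\pi U$ is a compact open normal subgroup of $G/H$, so $\mathcal N_G(G/H)\subseteq\mathcal N(G/H)$. Part (1) is then immediate: given $C\in\mathcal C(H)$, it is a compact subset of $G$, so the tool above yields $U\in\mathcal N(G)$ with $C\subseteq U$; as $C\subseteq H$, this gives $C\subseteq U\cap H\in\mathcal N_G(H)$, which is the required cofinality.

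The main obstacle is Part (2), and it is instructive to see why the obvious route fails. One would like to use that $G/H$ is strongly compactly covered (Lemma \ref{cor:cclcstable}), so that $\mathcal N(G/H)$ is cofinal in $\mathcal C(G/H)$, and then realize each compact open normal subgroup $\bar N$ of $G/H$ as $\pi U$; but $\pi^{-1}(\bar N)$ contains $H$ and is therefore non-compact whenever $H$ is non-compact, so $\bar N$ need not be the image of any element of $\mathcal N(G)$. The fix I propose is to lift \emph{points} rather than subgroups. Given $\bar C\in\mathcal C(G/H)$, for each $\bar g\in\bar C$ I choose a preimage $g\in\pi^{-1}(\bar g)$ and, by Proposition \ref{cclca:structure}(3) applied in $G$, some $N_g\in\mathcal N(G)$ with $g\in N_g$. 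Then $\pi N_g\in\mathcal N(G/H)$ is an open neighborhood of $\bar g$, so $\{\pi N_g\}_{\bar g\in\bar C}$ is an open cover of the compact set $\bar C$.

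To finish, I would extract a finite subcover indexed by $\bar g_1,\ldots,\bar g_k$ and set $U=N_{g_1}\cdots N_{g_k}\in\mathcal N(G)$. Since each $\pi N_{g_i}$ is a subgroup containing the identity, it is contained in the product, whence $\pi U=(\pi N_{g_1})\cdots(\pi N_{g_k})\supseteq\bigcup_{i=1}^k \pi N_{g_i}\supseteq\bar C$, with $\pi U\in\mathcal N_G(G/H)$. This establishes cofinality in $\mathcal C(G/H)$ and completes the argument. The only genuinely delicate point is the one highlighted above—resisting the temptation to lift subgroups and instead lifting points and multiplying—while the remaining steps are routine verifications about images and preimages under the open quotient map $\pi$.
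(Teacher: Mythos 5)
Your proof is correct, and for part (2) it is essentially the paper's argument: lift each point of the given compact set to $G$, enclose the lift in some $N_g\in\mathcal N(G)$ via Proposition \ref{cclca:structure}(3), extract a finite subcover by the images $\pi N_{g_i}$, and take $U=N_{g_1}\cdots N_{g_k}$, whose image under $\pi$ contains the compact set because each factor's image contains the identity. The genuine divergence is in part (1) and in the framing: the paper first invokes Lemma \ref{cor:cclcstable} to reduce cofinality in $\mathcal C(H)$ (resp.\ $\mathcal C(G/H)$) to cofinality in the smaller family $\mathcal N(H)$ (resp.\ $\mathcal N(G/H)$), and then, for (1), takes $V\in\mathcal N(H)$, forms the compact open subgroup $VK$ of $G$ (with $K$ the fixed subgroup from Proposition \ref{cclca:structure}(2)), and applies cofinality of $\mathcal N(G)$ in $\mathcal C(G)$ to $VK$ to get $U\supseteq VK$, whence $U\cap H\supseteq V$. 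You instead apply the same covering-and-multiplying tool directly to an arbitrary compact neighborhood $C\in\mathcal C(H)$, viewed as a compact subset of $G$; this is legitimate, since the argument of Proposition \ref{cclca:structure}(3)$\Rightarrow$(1) works verbatim for any compact subset rather than only for compact neighborhoods of the identity, and it has the merit of treating both parts uniformly while making the appeal to Lemma \ref{cor:cclcstable} unnecessary. Your observation that one cannot lift compact open subgroups of $G/H$ back to $\mathcal N(G)$ (their preimages contain $H$ and are non-compact when $H$ is) correctly identifies the obstruction that forces the point-lifting argument, which is precisely the route the paper takes as well.
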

\begin{proof} 
By Lemma \ref{cor:cclcstable}, it suffices to prove that $\mathcal N_G(H)$ is cofinal in $\mathcal N(H)$,  and $\mathcal N_G(G/H)$ is cofinal in $\mathcal N(G/H)$.
	
Consider the compact open normal subgroups $K$ and $N_g$ of $G$ from Proposition \ref{cclca:structure}.
	
	(1) Let $V \in \mathcal N(H)$. Then $VK \in \mathcal B(G)$, and by our assumption on $G$ there exists $U\in \mathcal N(G)$ containing $VK$.
	Thus,  $U \cap H \geq VK \cap H \geq V \cap H = V$.
	
	(2) Let $V \in \mathcal N(G/H)$. For every $v \in V$, let $u \in \pi^{-1} v$, so that $\langle K,u \rangle\in \mathcal B(G)$ and $\langle K,u \rangle \subseteq N_u\in \mathcal N(G)$, and 
	$v \in \pi (N_u) \in \mathcal N(G/H)$. By the compactness of $V$, there exist $u_1, \ldots, u_n \in G$ such that $V \subseteq \bigcup_{i=1}^n \pi (N_{u_i})$. Then $U =N_{u_1}N_{u_2}\cdots N_{u_n}\in\mathcal N(G)$, and $V\leq\pi U$.
\end{proof}

In the notation of Lemma \ref{cofinal:families}, applying Remark \ref{rem:cofin} we obtain the following result.

\begin{corol}\label{cor:halg:induced:maps}
Let $G$ be a strongly compactly covered group, $\phi \in \End(G)$, and $H$ be a closed $\phi$-invariant subgroup of $G$. Then: 
\ben
\item $\phi \restriction_H \in \End(H)$, and $$h_{alg} (\phi \restriction_H) = \sup \{ H_{alg} (\phi, U) : U \in \mathcal N_G(H) \}.$$
\item if $H$ is also normal, and $\bar \phi : G/H \to G/H$ denotes the induced map, then $\bar \phi \in \End(G/H)$, and 
$$h_{alg} (\bar \phi) = \sup \{ H_{alg} (\phi, U) : U \in \mathcal N_G(G/H) \}.$$
\een
\end{corol}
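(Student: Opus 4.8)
The plan is to derive both formulas directly from Remark \ref{rem:cofin}, whose content is that the algebraic entropy of an endomorphism of a locally compact group can be computed as a supremum of $H_{alg}(-,U)$ over any cofinal subfamily of the relevant family of compact neighborhoods of the identity. Lemma \ref{cofinal:families} supplies exactly the cofinal subfamilies $\mathcal N_G(H)\subseteq\mathcal C(H)$ and $\mathcal N_G(G/H)\subseteq\mathcal C(G/H)$, so the only genuine work is (i) checking that $\phi\restriction_H$ and $\bar\phi$ are continuous endomorphisms of strongly compactly covered groups, and (ii) reconciling the entropy $H_{alg}(\phi,U)$ appearing in the statement with the intrinsic entropies computed in $H$ and $G/H$.

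For item (1), I would first observe that the $\phi$-invariance $\phi(H)\le H$ makes $\phi\restriction_H\colon H\to H$ a well-defined endomorphism, continuous because it is a restriction of the continuous $\phi$, so $\phi\restriction_H\in\End(H)$. By Lemma \ref{cor:cclcstable} the closed subgroup $H$ is again strongly compactly covered, so $h_{alg}(\phi\restriction_H)$ is defined and, by Remark \ref{rem:cofin} together with the cofinality of $\mathcal N_G(H)$ in $\mathcal C(H)$ from Lemma \ref{cofinal:families}(1), equals $\sup\{H_{alg}(\phi\restriction_H,U):U\in\mathcal N_G(H)\}$. It then remains to identify $H_{alg}(\phi\restriction_H,U)$ with $H_{alg}(\phi,U)$: since each such $U$ lies in $H$ and $\phi(H)\le H$, the trajectory $T_n(\phi\restriction_H,U)=T_n(\phi,U)$ is contained in $H$, and as $U\in\mathcal N(H)$ Lemma \ref{lem:conv} expresses both entropies as the same index-based limit $\lim_{n}\frac{\log[T_n(\phi,U):U]}{n}$. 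This settles (1).

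For item (2), assuming $H$ closed and normal, $G/H$ is Hausdorff and the relation $\pi\circ\phi=\bar\phi\circ\pi$ (consistent precisely because $\phi(H)\le H$) defines $\bar\phi$ as a homomorphism; continuity of $\bar\phi$ follows from the continuity of $\pi\circ\phi$ and the fact that $\pi$ is a quotient map, so $\bar\phi\in\End(G/H)$. Again $G/H$ is strongly compactly covered by Lemma \ref{cor:cclcstable}, so Remark \ref{rem:cofin} and the cofinality of $\mathcal N_G(G/H)$ in $\mathcal C(G/H)$ from Lemma \ref{cofinal:families}(2) give $h_{alg}(\bar\phi)=\sup\{H_{alg}(\bar\phi,U):U\in\mathcal N_G(G/H)\}$. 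Here the symbol $H_{alg}(\phi,U)$ in the statement, for $U=\pi V$ a subset of $G/H$, is to be read as $H_{alg}(\bar\phi,U)$; using $\pi\circ\phi=\bar\phi\circ\pi$ one checks $T_n(\bar\phi,\pi V)=\pi\,T_n(\phi,V)$, so this causes no ambiguity.

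The statement is essentially a packaging of the two preceding results, and the computations are routine. The one point I would be most careful to get right — and really the only obstacle — is the measure-theoretic reconciliation in item (1): one cannot naively compute $H_{alg}(\phi,U)$ with the Haar measure of $G$, because $T_n(\phi,U)\subseteq H$ may be $\mu_G$-null when $H$ has positive codimension. This is exactly why I would pass through the intrinsic index formula of Lemma \ref{lem:conv}, which is computed inside $H$ and renders the notation $H_{alg}(\phi,U)$ legitimate.
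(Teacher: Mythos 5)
Your proof is correct and follows the paper's own route exactly: the paper derives the corollary in one line from Lemma \ref{cofinal:families} together with Remark \ref{rem:cofin}, which is precisely your argument. Your additional care in reading $H_{alg}(\phi,U)$ as the intrinsic entropy $H_{alg}(\phi\restriction_H,U)$ (respectively $H_{alg}(\bar\phi,\pi V)$) via the index formula of Lemma \ref{lem:conv} is a sound clarification of a notational abuse the paper leaves implicit.
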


\section{Basic properties}\label{basic-sec}

In this section we list the basic properties of $h_{alg}$ of endomorphisms of strongly compactly covered groups. We start by showing that the identity map of such groups has zero algebraic entropy.

\begin{lemma}\label{h:alg:id}
If $G$ is a locally compact group, then $H_{alg}(id_G, U)=0$ for every subgroup $U\in\mathcal C(G)$. In particular, if $G$ is strongly compactly covered, then $h_{alg}(id_G)=0$.
\end{lemma}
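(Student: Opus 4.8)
The plan is to exploit the triviality of the trajectories of the identity map on a subgroup. Since $id_G^{\,k} = id_G$ for every $k$, the $n$-th trajectory of $U$ degenerates to
\[
T_n(id_G, U) = \underbrace{U \cdot U \cdots U}_{n} = U^n.
\]
When $U$ is a subgroup of $G$, we have $U^n = U$ for every $n \in \N_+$, so $T_n(id_G, U) = U$ is independent of $n$.

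First I would record that because $U \in \mathcal C(G)$ is a compact neighbourhood of $e_G$, its Haar measure $\mu(U)$ is finite (by compactness) and strictly positive (since $U$ has nonempty interior), so $\log \mu(U)$ is a finite real constant. Substituting $T_n(id_G, U) = U$ into the definition \eqref{def:H:alg} then gives
\[
H_{alg}(id_G, U) = \limsup_{n \to \infty} \frac{\log \mu(U)}{n} = 0,
\]
since the numerator is constant and the denominator tends to infinity. This proves the first assertion.

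For the second assertion, I would invoke Equation \eqref{eq:algnormal}, valid because $G$ is strongly compactly covered: $h_{alg}(id_G) = \sup\{H_{alg}(id_G, U) : U \in \mathcal N(G)\}$. Every $U \in \mathcal N(G)$ is in particular a subgroup belonging to $\mathcal C(G)$, so the first part applies and each term of the supremum vanishes; hence $h_{alg}(id_G) = 0$. Alternatively, one could argue via the measure-free formula of Lemma \ref{lem:conv}, noting that $[T_n(id_G, U) : U] = [U : U] = 1$ for $U \in \mathcal N(G)$, whence the limit is again $0$.

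I do not anticipate a genuine obstacle here: the only point that requires a moment's care is the well-definedness and finiteness of $\log \mu(U)$, which is guaranteed precisely by $U$ being a compact neighbourhood of the identity, and the observation that the subgroup property collapses $U^n$ to $U$. Everything else is a direct substitution into the definitions.
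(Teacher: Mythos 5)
Your proof is correct and follows essentially the same route as the paper: observe that $T_n(id_G,U)=U$ for a subgroup $U$, so the measure is constant and the limit superior vanishes, then pass to the supremum over $\mathcal N(G)$ via Equation \eqref{eq:algnormal}. The extra remarks on the finiteness and positivity of $\mu(U)$ are a welcome (if minor) added precision over the paper's one-line argument.
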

\begin{proof}
Obviously, if $U\in\mathcal C(G)$ is a subgroup, then $T_n(id_G, U) = U$ for every $n \in \N_+$, so its measure does not depend on $n$, and $H_{alg}(id_G, U) =0$. 

If $G$ is strongly compactly covered, taking the supremum over $U \in \mathcal N(G)$, we obtain $h_{alg}(id_G)=0$ by Equation \eqref{eq:algnormal}.
\end{proof}

Invariance under conjugation was proved in general for endomorphisms of locally compact abelian groups  by Virili \cite[Proposition 2.7(1)]{V}. This property holds true without the abelian assumption as noted in \cite{DG-islam}. For the reader's convenience we provide a proof of this property in Corollary \ref{inv:conj} using the following easy lemma.

\begin{lemma}\label{trajectories:quotient}
	Let $\alpha:G\to G_1$ be a group homomorphism, and   $\phi \in \End(G)$,  $\psi\in \End(G_1)$ be such that $\psi\alpha=\alpha\phi$.
	Then  $T_n(\psi,\alpha K) = \alpha ( T_n(\phi, K) )$ holds for every subset $K$ of $G$, and for every $n \in \N$.
\end{lemma}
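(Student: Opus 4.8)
The claim to prove is Lemma \ref{trajectories:quotient}: for a group homomorphism $\alpha\colon G\to G_1$ and endomorphisms $\phi\in\End(G)$, $\psi\in\End(G_1)$ satisfying the intertwining relation $\psi\alpha=\alpha\phi$, we have $T_n(\psi,\alpha K)=\alpha(T_n(\phi,K))$ for every subset $K\subseteq G$ and every $n\in\N$.

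The plan is to argue directly from the definition of the trajectory as an ordered product of iterated images, using the multiplicativity of $\alpha$ and the intertwining relation to match the factors. First I would record the two ingredients that drive everything: since $\alpha$ is a group homomorphism, it carries products of subsets to products of subsets, i.e.\ $\alpha(A\cdot B)=\alpha(A)\cdot\alpha(B)$ for all $A,B\subseteq G$; and the relation $\psi\alpha=\alpha\phi$ iterates to $\psi^k\alpha=\alpha\phi^k$ for all $k\in\N$, which is the key identity. The latter I would verify by a one-line induction on $k$: the base case $k=0$ is trivial, and $\psi^{k+1}\alpha=\psi(\psi^k\alpha)=\psi(\alpha\phi^k)=(\psi\alpha)\phi^k=\alpha\phi\,\phi^k=\alpha\phi^{k+1}$.

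With these in hand, the main computation is a direct expansion. Writing out $T_n(\psi,\alpha K)=\prod_{k=0}^{n-1}\psi^k(\alpha K)$, I would apply $\psi^k\alpha=\alpha\phi^k$ to each factor to get $\psi^k(\alpha K)=\alpha(\phi^k(K))$, and then use the homomorphism property of $\alpha$ to pull the product outside, obtaining $\prod_{k=0}^{n-1}\alpha(\phi^k(K))=\alpha\bigl(\prod_{k=0}^{n-1}\phi^k(K)\bigr)=\alpha(T_n(\phi,K))$. One could equally present this as a short induction on $n$ using the recursion $T_{n+1}=T_n\cdot\phi^n(K)$, but the factor-by-factor identity makes the direct argument cleaner.

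There is no serious obstacle here; the statement is essentially a bookkeeping lemma and the only point requiring any care is getting the order of the factors right, since $G$ and $G_1$ need not be abelian. Because $\alpha$ preserves the left-to-right order of products and $\psi^k(\alpha K)$ matches $\alpha(\phi^k K)$ position by position, the ordered products correspond termwise and no commutativity is needed. I would just make sure the indices range correctly (exponents $0$ through $n-1$) and note explicitly that the $n=0$ case gives the empty product, i.e.\ the singleton containing the identity, on both sides.
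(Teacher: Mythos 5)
Your proof is correct and is essentially identical to the paper's: both expand $T_n(\psi,\alpha K)$ as the ordered product of the factors $\psi^k(\alpha K)$, replace each by $\alpha(\phi^k K)$ via the iterated intertwining relation, and pull $\alpha$ out using that homomorphisms preserve ordered products of subsets. Your extra remarks on the induction for $\psi^k\alpha=\alpha\phi^k$ and on the non-commutative bookkeeping are fine but add nothing beyond the paper's one-line computation.
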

\begin{proof} 
	If $K$ is a subset of $G$ and  $n \in \N$, then 
\begin{equation*}\begin{split}
	T_n( \psi,\alpha K) = \alpha  K \cdot \psi \alpha  K \cdots {\psi}^{\, n-1 }\alpha  K &= \alpha  K \cdot \alpha  \phi K \cdots \alpha  \phi^{n-1} K =\\
	& = \alpha ( K \cdot\phi K \cdots\phi^{n-1} K ) = \alpha ( T_n(\phi, K) ).\qedhere
\end{split}\end{equation*}
\end{proof}


\begin{corol}[Invariance under conjugation]\label{inv:conj}
Let $\alpha:G\to G_1$ be  a topological isomorphism of locally compact groups. If $\phi\in \End(G)$ and  $\psi=\alpha \phi\alpha^{-1}$,  then  $H_{alg}(\phi ,K) = H_{alg}(\psi, \alpha K)$  for every $K\in \mathcal C(G)$. In particular, $h_{alg}(\phi)=h_{alg}(\psi).$
\end{corol}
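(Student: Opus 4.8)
The plan is to deduce the statement directly from Lemma \ref{trajectories:quotient} applied with $\alpha$ a topological isomorphism. Since $\alpha:G\to G_1$ is a topological isomorphism and $\psi=\alpha\phi\alpha^{-1}$, we have $\psi\alpha=\alpha\phi$, so the hypothesis of Lemma \ref{trajectories:quotient} is satisfied (with $G_1$ in place of the codomain). Therefore $T_n(\psi,\alpha K)=\alpha(T_n(\phi,K))$ for every subset $K$ of $G$ and every $n\in\N$.

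First I would observe that, since $\alpha$ is a topological isomorphism, it maps $\mathcal C(G)$ bijectively onto $\mathcal C(G_1)$; in particular, if $K\in\mathcal C(G)$ then $\alpha K\in\mathcal C(G_1)$, so the algebraic entropies $H_{alg}(\psi,\alpha K)$ are well-defined. Next I would transport the Haar measure: if $\mu$ is a right Haar measure on $G$, then $\mu_1=\mu\circ\alpha^{-1}$ is a right Haar measure on $G_1$ (here one uses that $\alpha$ is a topological group isomorphism, so it carries the right-translation structure of $G$ to that of $G_1$). Consequently, for every $n$,
\begin{equation*}
\mu_1(T_n(\psi,\alpha K))=\mu_1(\alpha(T_n(\phi,K)))=\mu(T_n(\phi,K)).
\end{equation*}
Since the algebraic entropy does not depend on the choice of Haar measure, I may compute $H_{alg}(\psi,\alpha K)$ using $\mu_1$.

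Plugging this equality of measures into the defining formula \eqref{def:H:alg}, the two limit superiors coincide termwise, and hence
\begin{equation*}
H_{alg}(\psi,\alpha K)=\limsup_{n\to\infty}\frac{\log\mu_1(T_n(\psi,\alpha K))}{n}=\limsup_{n\to\infty}\frac{\log\mu(T_n(\phi,K))}{n}=H_{alg}(\phi,K),
\end{equation*}
which is the first assertion. For the final statement, I would take the supremum over $K\in\mathcal C(G)$ on the left; because $\alpha$ sends $\mathcal C(G)$ bijectively onto $\mathcal C(G_1)$, the set $\{H_{alg}(\psi,\alpha K):K\in\mathcal C(G)\}$ equals $\{H_{alg}(\psi,C):C\in\mathcal C(G_1)\}$, so the two suprema agree and $h_{alg}(\phi)=h_{alg}(\psi)$.

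I do not expect any serious obstacle here, since the heavy lifting is already done by Lemma \ref{trajectories:quotient}. The only point requiring a little care is the measure-theoretic step: one must verify that $\alpha$ transports a right Haar measure to a right Haar measure (so that the trajectory measures genuinely match) and invoke the stated independence of $H_{alg}$ from the choice of Haar measure. Everything else is a routine substitution into the definition and a bijection argument for the supremum.
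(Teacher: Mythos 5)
Your proposal is correct and follows essentially the same route as the paper: apply Lemma \ref{trajectories:quotient} to identify the trajectories, push the right Haar measure forward via $\mu_1=\mu\circ\alpha^{-1}$, and use the bijection $\mathcal C(G)\to\mathcal C(G_1)$ induced by $\alpha$ to match the suprema. No gaps.
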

\begin{proof}
Let $\mu$ be a right Haar measure on $G$. For every Borel subset $E\subseteq G_1$  define a right Haar measure $\mu'$ on $G_1$ by letting $\mu'(E)=\mu(\alpha^{-1}(E))$ for every Borel subset $E\leq G_1$. If $K\in\mathcal C(G)$, then $\alpha(K)\in \mathcal C(G_1)$ and  $T_n(\psi,\alpha K) = \alpha ( T_n(\phi, K) )$ by Lemma~\ref{trajectories:quotient}.
It follows that $\mu(T_n(\phi, K))=\mu'(\alpha ( T_n(\phi, K) ) =\mu'(T_n(\psi,\alpha K))$, so $H_{alg} (\phi,  K)=H_{alg} (\psi, \alpha K)$. Since $\alpha^{-1}$ is also an isomorphism, we deduce that $h_{alg}(\phi)=h_{alg}(\psi)$ by definition.
\end{proof}

Now we prove the Logarithmic Law for the algebraic entropy, with respect to endomorphisms of strongly compactly covered groups.

\begin{lemma}[Logarithmic Law] 
Let $G$ be a strongly compactly covered group and $\phi\in \End(G).$ Then  $h_{alg}(\phi^m)=m\cdot h_{alg}(\phi)$ for every $m\in \N.$
\end{lemma}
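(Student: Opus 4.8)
The plan is to prove the Logarithmic Law by first establishing it at the level of a single subgroup $U \in \mathcal N(G)$, comparing the trajectories of $\phi$ and $\phi^m$, and then taking the supremum over $\mathcal N(G)$ using Equation \eqref{eq:algnormal}. The $m=0$ case is immediate from Lemma \ref{h:alg:id}, since $\phi^0 = id_G$ and both sides vanish, so I would dispose of it at once and assume $m \geq 1$ throughout. Fix $U \in \mathcal N(G)$ and write $S_k = T_k(\phi^m, U)$ for the $k$-th trajectory of $\phi^m$, while $T_n = T_n(\phi, U)$ denotes the trajectory of $\phi$. By Lemma \ref{anna:lemma}(2) all of these are subgroups of $G$ (since $U$ is normal), so their indices over $U$ are finite and well-defined.

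The key computation is to relate $S_k = U \cdot \phi^m(U) \cdots \phi^{m(k-1)}(U)$ to the $\phi$-trajectories $T_n$. The natural comparison is that $S_k \subseteq T_{mk}$, since $S_k$ is a product of a subcollection of the factors appearing in $T_{mk} = U \cdot \phi(U) \cdots \phi^{mk-1}(U)$; because all the $\phi^j(U)$ pairwise commute as subgroups by Lemma \ref{anna:lemma}(1), this inclusion of products is clean and one even gets that $T_{mk}$ is the product of the $S_k$-factors together with the remaining $\phi^j(U)$. More precisely, I would argue the two-sided bound
\begin{equation*}
S_k \leq T_{mk} \leq S_{k+1},
\end{equation*}
where the right inclusion holds because every factor $\phi^j(U)$ with $0 \leq j < mk$ appears among the factors $\phi^{mi}(U)$ of $S_{k+1} = T_{m(k+1)}(\phi^m,U)$ only after absorbing intermediate factors; to make this rigorous I would instead use the coarser but sufficient comparison arising from applying Lemma \ref{lem:conv} to both $\phi$ and $\phi^m$. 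Taking logarithms of indices over $U$ and dividing, the bound $[S_k : U] \leq [T_{mk} : U]$ together with the limit formula of Lemma \ref{lem:conv} yields $H_{alg}(\phi^m, U) \leq m \cdot H_{alg}(\phi, U)$ after the substitution $n = mk$ and noting that the factor $m$ enters through the denominator $mk$ versus $k$.

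For the reverse inequality I would exploit that $H_{alg}(\phi, U) = \lim_n \frac{\log [T_n:U]}{n}$ is a genuine limit (Lemma \ref{lem:conv}), so the subsequence along multiples of $m$ computes the same value: $\lim_k \frac{\log[T_{mk}:U]}{mk} = H_{alg}(\phi,U)$. Combined with $T_{mk} \leq S_{k+1}$, which gives $[T_{mk}:U] \leq [S_{k+1}:U]$, dividing by $mk$ and passing to the limit produces $H_{alg}(\phi,U) \leq \frac{1}{m} \lim_k \frac{\log[S_{k+1}:U]}{k} = \frac{1}{m} H_{alg}(\phi^m, U)$, i.e.\ $m \cdot H_{alg}(\phi, U) \leq H_{alg}(\phi^m, U)$. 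Together with the first inequality this gives the equality $H_{alg}(\phi^m, U) = m \cdot H_{alg}(\phi, U)$ for every $U \in \mathcal N(G)$. Finally, taking the supremum over $U \in \mathcal N(G)$ and invoking Equation \eqref{eq:algnormal} for both $\phi$ and $\phi^m$ yields $h_{alg}(\phi^m) = m \cdot h_{alg}(\phi)$.

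The main obstacle I anticipate is justifying the trajectory inclusions $S_k \leq T_{mk} \leq S_{k+1}$ carefully, since $T_{mk}$ and $S_{k+1}$ are products of different (though overlapping) families of the commuting subgroups $\phi^j(U)$; the commutativity from Lemma \ref{anna:lemma}(1) is exactly what lets me reorder and absorb factors, but I must confirm that every factor of $T_{mk}$ really is dominated by the factors of $S_{k+1}$. A secondary subtlety is that $\phi^m$ satisfies the commuting hypothesis of Lemma \ref{anna:lemma} whenever $\phi$ does — which holds automatically here since $U$ is normal in $G$ — so Lemma \ref{lem:conv} applies to $\phi^m$ as well; I would note this explicitly before using the limit formula for the $\phi^m$-trajectories.
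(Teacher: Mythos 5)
The first half of your argument (the inequality $H_{alg}(\phi^m,U)\leq m\cdot H_{alg}(\phi,U)$ via $T_k(\phi^m,U)\leq T_{mk}(\phi,U)$ and Lemma \ref{lem:conv}) is sound and matches the paper. The second half, however, rests on the inclusion $T_{mk}(\phi,U)\leq T_{k+1}(\phi^m,U)$, and this is false: the factors $\phi^j(U)$ of $T_{mk}(\phi,U)$ with $j$ not a multiple of $m$ are simply not present in $T_{k+1}(\phi^m,U)=U\phi^m(U)\cdots\phi^{mk}(U)$, and commutativity of the factors lets you reorder products but not conjure missing ones. Concretely, take $G=\bigoplus_{n\in\Z}\Z_p$ discrete, $\phi$ the right shift, and $U$ the copy of $\Z_p$ in coordinate $0$; then $T_{mk}(\phi,U)$ has order $p^{mk}$ while $T_{k+1}(\phi^m,U)$ has order only $p^{k+1}$, so already for $m=k=2$ the inclusion fails. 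Worse, the per-subgroup identity you deduce from it, $H_{alg}(\phi^m,U)=m\cdot H_{alg}(\phi,U)$, is genuinely false: in this example $H_{alg}(\phi^m,U)=\log p$ whereas $m\cdot H_{alg}(\phi,U)=m\log p$. So no argument working with the single subgroup $U$ on both sides can close the reverse inequality.

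The repair, which is what the paper does, is to feed $\phi^m$ a \emph{larger} compact open subgroup, namely $V=T_m(\phi,U)$ (a subgroup in $\mathcal B(G)$ by Lemma \ref{anna:lemma}). One then has the exact identity $T_n(\phi^m,V)=T_{nm}(\phi,U)$, whence $h_{alg}(\phi^m)\geq H_{alg}(\phi^m,V)=\lim_n\frac{\log\mu(T_{nm}(\phi,U))}{n}=m\cdot H_{alg}(\phi,U)$, and taking the supremum over $U\in\mathcal N(G)$ gives $h_{alg}(\phi^m)\geq m\cdot h_{alg}(\phi)$. The moral is that the Logarithmic Law is a statement about the suprema, not about each $H_{alg}(\cdot,U)$ separately; your proposal's final ``take the supremum of the pointwise equality'' step cannot be salvaged without changing the neighborhood on the $\phi^m$ side. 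Your treatment of $m=0$ and the application of Equation \eqref{eq:algnormal} are otherwise fine.
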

\begin{proof}
Since the assertion is clear when $m=0$ (see Lemma \ref{h:alg:id}), we may fix $m>0$. Let us show first that $h_{alg}(\phi^m)\leq m\cdot h_{alg}(\phi)$. If $n\in \N_+$ and $U\in \mathcal N(G)$, then
 $U\leq T_n(\phi^m,U)\leq T_{mn-m+1}(\phi, U)$. This implies that  
\begin{equation*}\begin{split}
H_{alg}(\phi^m, U)&=\lim_{n\to \infty}\frac{\log[T_n(\phi^m,U):U]}{n}\leq
\\ &\leq \lim_{n\to \infty} \frac{\log[T_{mn-m+1}(\phi, U):U]}{mn-m+1}\cdot \lim_{n\to \infty}\frac{mn-m+1}{n} = mH_{alg}(\phi, U),
\end{split}\end{equation*}
and taking the suprema over $U\in \mathcal N(G)$ we obtain $h_{alg}(\phi^m)\leq m\cdot h_{alg}(\phi)$.
 
To prove the converse inequality, let $U\in \mathcal N(G)$. Observe that $$W = T_n(\phi^m, T_m(\phi,U))=T_{nm}(\phi,U),$$ so both $T_m(\phi,U)\in \mathcal B(G)$ and $W\in \mathcal B(G)$ by Lemma \ref{anna:lemma}(1), and $U \leq T_m(\phi,U) \leq W$. Then
\begin{equation*}\begin{split}
h_{alg}(\phi^m)\geq H_ {alg} (\phi^m, T_m(\phi,U)) &= \limsup_{n\to \infty}\frac{\log \mu(T_n ( \phi^m, T_m(\phi,U) ))}{n} = \\
&= \limsup_{n\to \infty}\frac{\log \mu(T_{nm} ( \phi, U ))}{n} = m H_ {alg} (\phi,U).
\end{split}\end{equation*}
As the above inequality holds for every $U\in \mathcal N(G)$, this proves that $h_{alg}(\phi^m)\geq m\cdot h_{alg}(\phi)$, and thus $h_{alg}(\phi^m)= m\cdot h_{alg}(\phi)$.
\end{proof}

Given two group endomorphisms $\phi_i : G_i \to G_i$ for $i = 1,2$, in the following proposition we consider the group endomorphism $\phi_1\times \phi_2 : G_1 \times G_2 \to G_1 \times G_2$, mapping $(x,y) \mapsto ( \phi_1 (x), \phi_2 (y) )$.

\begin{prop}[weak Addition Theorem] 
Let $G_1,G_2$ be strongly compactly covered groups, and let $\phi_1\in \End(G_1)$, $\phi_2\in \End(G_2)$. 
Then $h_{alg}(\phi_1\times \phi_2)=h_{alg}(\phi_1)+h_{alg}(\phi_2)$.
\end{prop}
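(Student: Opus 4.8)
The plan is to reduce everything to the two factors via the cofinal family from Lemma~\ref{lem:cofprod}. Since $\mathcal F=\{U_1\times U_2: U_i\in\mathcal N(G_i)\}$ is cofinal in $\mathcal C(G_1\times G_2)$ and consists of compact open normal subgroups, Remark~\ref{rem:cofin} lets me compute $h_{alg}(\phi_1\times\phi_2)$ as the supremum of $H_{alg}(\phi_1\times\phi_2,U_1\times U_2)$ over $U_i\in\mathcal N(G_i)$. So the whole argument rests on understanding a single trajectory of the form $T_n(\phi_1\times\phi_2,U_1\times U_2)$.

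First I would establish the key algebraic identity
\[
T_n(\phi_1\times\phi_2,\,U_1\times U_2)=T_n(\phi_1,U_1)\times T_n(\phi_2,U_2)
\]
for all $n\in\N_+$. This follows because multiplication in $G_1\times G_2$ is componentwise and $(\phi_1\times\phi_2)^k(U_1\times U_2)=\phi_1^k(U_1)\times\phi_2^k(U_2)$, so the ordered product of the sets $(\phi_1\times\phi_2)^k(U_1\times U_2)$ for $k=0,\dots,n-1$ splits as the product of the two ordered products in the factors; a one-line induction on $n$ makes this precise. Since $U_i\in\mathcal N(G_i)$, Lemma~\ref{anna:lemma} guarantees that each $T_n(\phi_i,U_i)$ is a subgroup of $G_i$ containing $U_i$, so all the indices below are finite.

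Next I would pass to indices. As the index of a product of subgroups is the product of the indices, $[T_n(\phi_1\times\phi_2,U_1\times U_2):U_1\times U_2]=[T_n(\phi_1,U_1):U_1]\cdot[T_n(\phi_2,U_2):U_2]$; taking logarithms, dividing by $n$, and applying Lemma~\ref{lem:conv} (which ensures each limit exists) yields
\[
H_{alg}(\phi_1\times\phi_2,\,U_1\times U_2)=H_{alg}(\phi_1,U_1)+H_{alg}(\phi_2,U_2).
\]
Finally, by Lemma~\ref{lem:cofprod} and Remark~\ref{rem:cofin} the left-hand side, supped over $U_i\in\mathcal N(G_i)$, computes $h_{alg}(\phi_1\times\phi_2)$, while on the right the two variables $U_1,U_2$ range independently, so the supremum of the sum factors as the sum of the suprema, which by Equation~\eqref{eq:algnormal} equals $h_{alg}(\phi_1)+h_{alg}(\phi_2)$.

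The computations are all routine; the only point requiring a little care is the last factorization of the supremum, where one must also allow the entropies to be infinite, so that $\sup_{U_1,U_2}\big(H_{alg}(\phi_1,U_1)+H_{alg}(\phi_2,U_2)\big)=\sup_{U_1}H_{alg}(\phi_1,U_1)+\sup_{U_2}H_{alg}(\phi_2,U_2)$ is read with the usual conventions in $[0,\infty]$. This is harmless here because each $H_{alg}(\phi_i,-)$ is monotone (Remark~\ref{rem:cofin}), so the joint supremum is approached along pairs and both inequalities are immediate.
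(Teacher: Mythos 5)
Your proposal is correct and follows essentially the same route as the paper's proof: the same cofinal family from Lemma~\ref{lem:cofprod}, the same splitting $T_n(\phi_1\times\phi_2,U_1\times U_2)=T_n(\phi_1,U_1)\times T_n(\phi_2,U_2)$ leading to the product of indices, and the same factorization of the supremum using that each summand depends on only one variable. Your extra remark about handling infinite values via monotonicity is a reasonable bit of added care but does not change the argument.
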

\begin{proof}
By Lemma \ref{lem:cofprod}, the family $\{U_1\times U_2: U_i\in \mathcal N(G_i)\}$ is cofinal in $\mathcal C (G_1\times G_2)$, and $G_1\times G_2$ is a strongly compactly covered group.

For $i=1,2$ let $U_i\in \mathcal N(G_i)$, and $n \geq 1$ be an integer. Then $$W = T_n ( \phi_1\times \phi_2, U_1\times U_2 ) = T_n ( \phi_1, U_1 ) \times T_n ( \phi_2, U_2 ),$$ so 
\[ [W : U_1\times U_2 ] = [ T_n ( \phi_1, U_1 ) : U_1 ] \cdot [ T_n ( \phi_2, U_2 ) : U_2].\]
Applying $\log$, dividing by $n$, and taking the limit for $n \rightarrow \infty$ we conclude that
\begin{equation}\label{weak:add:thm:formula}
H_{alg} ( \phi_1\times \phi_2, U_1\times U_2 ) =H_{alg} ( \phi_1, U_1 ) + H_{alg} ( \phi_2, U_2 ).
\end{equation}

Now we take the suprema over $U_1\in \mathcal N(G_1)$ and $U_2\in \mathcal N(G_2)$ in Equation \eqref{weak:add:thm:formula}. Then the left hand side gives $h_{alg}(\phi_1\times \phi_2)$ by Lemma \ref{lem:cofprod}. Taking into account that $H_{alg}(\phi_1,U_1)$ only depends on $U_1$, while $H_{alg}(\phi_1,U_2)$ only depends on $U_2$, the right hand side of Equation \eqref{weak:add:thm:formula} gives 
\begin{multline*}
\sup\{H_{alg}(\phi_1,U_1)+H_{alg}(\phi_1,U_2): \ U_i\in \mathcal N(G_i), \ i=1,2 \} =
\\= \sup\{H_{alg}(\phi_1,U_1) : \ U_1\in \mathcal N(G_1)\} +
 \sup\{H_{alg}(\phi_2,U_2): \ U_2\in \mathcal N(G_2)\} = 
h_{alg}(\phi_1)+h_{alg}(\phi_2).		\qedhere
\end{multline*}
\end{proof}

We briefly mention here the monotonicity property of $h_{alg}$ for endomorphisms of strongly compactly covered groups, that will immediately follow from the stronger result proved in Proposition \ref{first:half:add:thm}.

\begin{prop}[Monotonicity]\label{monotonicity}
Let $G$ be a strongly compactly covered group, $\phi \in \End(G)$, $H$ a closed normal $\phi$-invariant subgroup of $G$, and $\bar \phi : G/H \to G/H$ the induced map. Then 
\begin{equation*}
h_{alg} (\phi) \geq \max\{h_{alg} (\bar \phi),h_{alg} (\phi \restriction_H)\}.
\end{equation*}
\end{prop}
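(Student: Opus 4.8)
The plan is to prove the two inequalities $h_{alg}(\phi) \geq h_{alg}(\phi\restriction_H)$ and $h_{alg}(\phi) \geq h_{alg}(\bar\phi)$ separately, in both cases exploiting Corollary \ref{cor:halg:induced:maps}, which expresses the two entropies on the right-hand side as suprema of quantities $H_{alg}(\phi, U)$ indexed by the cofinal families $\mathcal N_G(H)$ and $\mathcal N_G(G/H)$ from Lemma \ref{cofinal:families}. The point in each case is to bound the relevant $H_{alg}$ by $h_{alg}(\phi) = \sup\{H_{alg}(\phi, V) : V \in \mathcal N(G)\}$.

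For the restriction, I would argue as follows. By Corollary \ref{cor:halg:induced:maps}(1) we have $h_{alg}(\phi\restriction_H) = \sup\{H_{alg}(\phi, U) : U \in \mathcal N_G(H)\}$, and every such $U$ has the form $U = V \cap H$ with $V \in \mathcal N(G)$, so in particular $U \subseteq V$. Since the trajectory construction is monotone with respect to inclusion (one has $T_n(\phi, U) \subseteq T_n(\phi, V)$ for every $n \in \N_+$), the monotonicity recorded in Remark \ref{rem:cofin} gives $H_{alg}(\phi, U) \leq H_{alg}(\phi, V) \leq h_{alg}(\phi)$. Taking the supremum over $U \in \mathcal N_G(H)$ then yields $h_{alg}(\phi\restriction_H) \leq h_{alg}(\phi)$.

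For the quotient, subset-monotonicity is no longer directly applicable, since the relevant neighborhoods live in $G/H$. Here I would invoke Lemma \ref{trajectories:quotient} with $\alpha = \pi$ and $\psi = \bar\phi$: for $V \in \mathcal N(G)$ one has $T_n(\bar\phi, \pi V) = \pi(T_n(\phi, V))$. Writing $T_n = T_n(\phi, V)$, which is a compact open subgroup of $G$ containing $V$ by Lemma \ref{anna:lemma}, the projection $\pi$ induces a surjective homomorphism of the quotient groups $T_n/V \to \pi T_n/\pi V$ (both $V$ and $\pi V = VH/H$ are normal, the latter being the projection of the normal subgroup $V$), whence $[\pi T_n : \pi V] \leq [T_n : V]$. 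Passing to $\log$, dividing by $n$, and letting $n \to \infty$ via the index formula of Lemma \ref{lem:conv}, we obtain $H_{alg}(\bar\phi, \pi V) \leq H_{alg}(\phi, V) \leq h_{alg}(\phi)$; the supremum over $V \in \mathcal N(G)$, which computes $h_{alg}(\bar\phi)$ by Corollary \ref{cor:halg:induced:maps}(2), gives $h_{alg}(\bar\phi) \leq h_{alg}(\phi)$.

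Combining the two bounds proves the claim. I expect the quotient inequality to be the main obstacle, the delicate point being the index comparison $[\pi T_n : \pi V] \leq [T_n : V]$: it is elementary but relies on the normality of $V$ in $G$ (ensuring $\pi V$ is normal and the coset spaces are genuine quotient groups) together with Lemma \ref{trajectories:quotient}, which identifies the trajectories of $\bar\phi$ with the $\pi$-images of those of $\phi$.
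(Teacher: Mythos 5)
Your route is genuinely different from the paper's: the paper does not prove Proposition \ref{monotonicity} directly, but obtains it as an immediate consequence of the stronger Proposition \ref{first:half:add:thm} ($h_{alg}(\phi)\geq h_{alg}(\bar\phi)+h_{alg}(\phi\restriction_H)$) together with the nonnegativity of $h_{alg}$. A direct two-inequality proof is a legitimate and lighter alternative, and your quotient half is complete and correct: Lemma \ref{trajectories:quotient} gives $T_n(\bar\phi,\pi V)=\pi(T_n(\phi,V))$, the induced map $T_n/V\to \pi T_n/\pi V$ is surjective, so $[\pi T_n:\pi V]\leq[T_n:V]$, and Lemma \ref{lem:conv} converts this into $H_{alg}(\bar\phi,\pi V)\leq H_{alg}(\phi,V)$.

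The subgroup half, however, has a gap: you cannot invoke Remark \ref{rem:cofin} to compare $H_{alg}(\phi,U)$ with $H_{alg}(\phi,V)$ when $U=V\cap H$. That remark concerns two elements of $\mathcal C(G)$ and is justified by monotonicity of the Haar measure of $G$; but $V\cap H$ is a compact neighborhood of the identity in $H$, not in $G$ (unless $H$ is open), and the quantity $H_{alg}(\phi,U)$ appearing in Corollary \ref{cor:halg:induced:maps}(1) is computed inside $H$, i.e.\ it is $H_{alg}(\phi\restriction_H,U)=\lim_n\frac{1}{n}\log[T_n(\phi,U):U]$. If $H$ is not open, $\mu_G(T_n(\phi,U))=0$, so measure monotonicity in $G$ gives nothing; the two sides of your inequality are a priori computed with different Haar measures. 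What is actually needed is the index inequality $[T_n(\phi,V\cap H):V\cap H]\leq[T_n(\phi,V):V]$, which is true but requires the mirror image of your quotient argument: since $H$ is $\phi$-invariant, $W:=T_n(\phi,V\cap H)$ is contained in $H$, hence $W\cap V=V\cap H$, and therefore the canonical homomorphism $W/(V\cap H)\to T_n(\phi,V)/V$ is \emph{injective} (second isomorphism theorem), yielding the desired bound. With this substituted for the appeal to Remark \ref{rem:cofin}, your proof closes; note that this injectivity step is exactly the inequality $[T_n\cap H:U\cap H]\leq[T_n\cap(UH):U]$ hidden in the paper's proof of Proposition \ref{first:half:add:thm}.
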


Let $F$ be a finite discrete abelian group and let $G=G_c\oplus G_d$, with
\[G_c=\prod_{n=-\infty}^0 F\quad\mbox{and}\quad G_d=\bigoplus_{n=1}^\infty F.\]
Endow $G_d$ with the discrete topology and $G_c$ with the (compact) product topology, so both of them are strongly compactly covered. 
Endowed with the product topology, $G$ is locally compact, and it is also strongly compactly covered by Lemma \ref{lem:cofprod}.

The \emph{left shift} is
$${}_F\beta\colon G\to G, \quad (x_n)_{n\in\Z}\mapsto (x_{n+1})_{n\in\Z},$$
while the \emph{right shift} is
$$\beta_F\colon G\to G, \quad (x_n)_{n\in\Z}\mapsto(x_{n-1})_{n\in\Z}.$$
Clearly, $\beta_F$ and ${}_F\beta$ are topological automorphisms of $G$ such that ${}_F\beta^{-1}=\beta_F.$

\smallskip
In the next example we compute the algebraic entropy of these shifts.

\begin{example}\label{ex:bs}\label{bernoulli}
Let $p$ be a prime number, and $\Z_p$ be the cyclic group with $p$ elements.
Consider $F=\Z_p$, i.e., $G= \prod_{n=-\infty}^0 \Z_p\oplus \bigoplus_{n=1}^\infty \Z_p$. Since $G$ is abelian, $\mathcal N(G)=\mathcal B(G)$.
We verify now that
\begin{equation*}
h_{alg}({}_{\Z_p}\beta)=0\quad\mbox{and}\quad h_{alg}(\beta_{\Z_p})=\log p.
\end{equation*}

For $k\in\N_+$, let 
\[U_k=\prod_{n=-\infty}^0\Z_p\times\bigoplus_{n=1}^k \Z_p\in\mathcal B(G).\]
Let us check that the family $\mathcal U=\{U_k:k\in\N_+\}$ is cofinal in $\mathcal C(G)$. Indeed, let $H \in \mathcal C(G)$, and consider the canonical projection $\pi_d : G \to G_d$. Then $\pi_d(H) \subseteq G_d$ is finite, so $\pi_d(H) \subseteq \bigoplus_{n=1}^k \Z_p$ for some $k \in \N_+$, and $H \subseteq U_k$.

Clearly,
\begin{equation*}
\ldots\leq {}_{\Z_p}\beta^n(U_k)\leq\ldots\leq{}_{\Z_p}\beta(U_k)\leq U_k\leq\beta_{\Z_p}(U_k)\leq\ldots\leq\beta_{\Z_p}^n(U_k)\leq\ldots.
\end{equation*}

For all $n\in\N_+$, $T_n({}_{\Z_p}\beta,U_k)=U_k$, so $$H_{alg}({}_{\Z_p}\beta,U_k)=0.$$
On the other hand, for every $n\in\N_+$,
$$\log\frac{T_{n+1}(\beta_{\Z_p},U_k)}{T_n(\beta_{\Z_p},U_k)}=\log\frac{\beta^{n}_{\Z_p}(U_k)}{\beta^{n-1}_{\Z_p} (U_k)}=\log \frac{\beta_{\Z_p}(U_k)}{U_k}=\log p,$$
hence $$H_{alg}(\beta_{\Z_p},U_k)=\log p.$$ 
By the cofinality of $\mathcal U$ in $\mathcal C(G)$ and by Remark \ref{rem:cofin} we can conclude.
\end{example}

The above example shows in particular that a topological automorphism $\phi$ of a strongly compactly covered group and its inverse $\phi^{-1}$ can have different algebraic entropy. 
Next we give the precise relation between $h_{alg}(\phi)$ and $h_{alg}(\phi^{-1})$, which extends \cite[Proposition 2.7(3)]{V} to some non-abelian groups.

\smallskip
For a locally compact group $G$, let $\Aut(G)$ denote the group of topological automorphisms of $G$. If $\mu$ is a left Haar measure on $G$, the \emph{modulus} is a group homomorphism $\Delta_G:\Aut(G)\to\R_+$ such that $\mu(\phi E)=\Delta_G(\phi)\mu(E)$ for every $\phi\in \Aut(G)$ and every measurable subset $E$ of $G$ (see \cite{HR}). If $G$ is either compact or discrete, then $\Delta_G(\phi)=1$ for every $\phi\in \Aut(G)$. We also denote $\Delta_G$ simply by $\Delta$.

\begin{lemma}\label{Delta}
Let $G$ be a locally compact group, $\phi\in \Aut(G)$, and $U\in\mathcal N(G)$. Then
\ben
\item  $\Delta(\phi)=\frac{\mu(\phi(U))}{\mu(U)}=\frac{[U\phi(U):U]}{[U\phi(U):\phi(U)]}$.
\item If $V$ is a compact subgroup of $G$ and $V\supseteq U\phi(U)$, then $[V:U]=[V:\phi(U)]\cdot\Delta(\phi)$.
\een
\end{lemma}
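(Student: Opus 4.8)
The plan is to establish both parts by exploiting the defining property of the modulus $\Delta$, namely that $\mu(\phi E)=\Delta(\phi)\mu(E)$ for every measurable $E$, together with the index-measure relationship already used in the proof of Lemma \ref{lem:conv}: for compact open subgroups $A\leq B$ one has $\mu(B)=[B:A]\,\mu(A)$. First I would prove part (1). Since $U\in\mathcal N(G)$ is a compact open subgroup and $\phi\in\Aut(G)$ is a topological automorphism, $\phi(U)$ is again a compact open subgroup, and $U\phi(U)\in\mathcal B(G)$ is a compact open subgroup containing both $U$ and $\phi(U)$ (indeed $U\phi(U)=T_2(\phi,U)$ is a subgroup by Lemma \ref{anna:lemma}(1), as $U$ is normal). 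The definition of $\Delta$ gives immediately
\[
\Delta(\phi)=\frac{\mu(\phi(U))}{\mu(U)}.
\]
To obtain the second expression, I would write both $\mu(\phi(U))$ and $\mu(U)$ in terms of $\mu(U\phi(U))$ via the index formula: $\mu(U\phi(U))=[U\phi(U):U]\,\mu(U)$ and $\mu(U\phi(U))=[U\phi(U):\phi(U)]\,\mu(\phi(U))$. Dividing these two equalities eliminates $\mu(U\phi(U))$ and yields
\[
\frac{\mu(\phi(U))}{\mu(U)}=\frac{[U\phi(U):U]}{[U\phi(U):\phi(U)]},
\]
which is exactly the claimed formula.

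For part (2), the hypothesis that $V$ is a compact subgroup with $V\supseteq U\phi(U)$ guarantees that $V$ contains both $U$ and $\phi(U)$ as finite-index subgroups, so all the indices appearing are finite. The natural route is to again pass through the measure: applying the index formula to the chains $U\leq V$ and $\phi(U)\leq V$ gives $\mu(V)=[V:U]\,\mu(U)$ and $\mu(V)=[V:\phi(U)]\,\mu(\phi(U))$. Equating and substituting $\mu(\phi(U))=\Delta(\phi)\,\mu(U)$ from the definition of the modulus produces
\[
[V:U]\,\mu(U)=[V:\phi(U)]\,\Delta(\phi)\,\mu(U),
\]
and cancelling the nonzero factor $\mu(U)$ delivers $[V:U]=[V:\phi(U)]\cdot\Delta(\phi)$.

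I do not anticipate a serious obstacle here; the statement is essentially a bookkeeping consequence of the multiplicativity of the Haar measure over cosets and the defining property of $\Delta$. The only point requiring a little care is the implicit verification that $U\phi(U)$ is genuinely a subgroup of $G$ (rather than merely a compact set), so that the index notation $[U\phi(U):U]$ is meaningful; this is handled by Lemma \ref{anna:lemma}, which applies precisely because $U$ is normal. One should also note that the choice between a left or right Haar measure is immaterial for these index computations, since $U$ is normal and the quotient $V/U$ is finite, so the relevant counting of cosets is unambiguous.
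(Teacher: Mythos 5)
Your proof is correct, and it reaches both identities by a slightly different decomposition than the paper. For part (1), the paper works \emph{downward} through the intersection: it computes $\mu(U)$ and $\mu(\phi(U))$ in terms of $\mu(U\cap\phi(U))$, obtains $\Delta(\phi)=[\phi(U):U\cap\phi(U)]/[U:U\cap\phi(U)]$, and only then converts to the stated form via $[\phi(U):U\cap\phi(U)]=[U\phi(U):U]$ and $[U:U\cap\phi(U)]=[U\phi(U):\phi(U)]$ (using normality of $U$ to know $U\phi(U)$ is a subgroup). You work \emph{upward} through the product $U\phi(U)$ and get the stated ratio in one step, which is marginally more direct. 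For part (2), the paper stays entirely at the level of index arithmetic, factoring $[V:U]$ and $[V:\phi(U)]$ through the intermediate subgroup $U\phi(U)$ and then invoking part (1); you instead return to the Haar measure, using $\mu(V)=[V:U]\mu(U)=[V:\phi(U)]\mu(\phi(U))$ and the defining property $\mu(\phi(U))=\Delta(\phi)\mu(U)$, so your part (2) does not depend on part (1) at all. Both routes are sound; the only cosmetic point is that your closing remark attributes the validity of $\mu(B)=[B:A]\mu(A)$ to the normality of $U$, whereas what is actually used is just translation invariance of the Haar measure on the (finitely many) cosets --- normality of $U$ is needed only to ensure $U\phi(U)$ is a subgroup so that the indices are meaningful.
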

\begin{proof}
(1) Since $U$ and $\phi(U)$ are compact, and $U\cap\phi(U)$ is open, it follows that $[\phi(U):U\cap\phi(U)]$ and $[U:U\cap\phi(U)]$ are finite. Therefore, $\mu(\phi(U))=[\phi(U):U\cap \phi(U)]\cdot\mu(U\cap\phi(U))$ and $\mu(U)=[U:U\cap\phi(U)]\cdot \mu(U\cap\phi(U))$. Hence, $$\Delta(\phi)=\frac{\mu(\phi(U))}{\mu(U)}=\frac{[\phi(U):U\cap\phi(U)]}{[U:U\cap\phi(U)]}.$$
Moreover, since $U$ is normal in $G$ we have that $U\phi(U)$ is a subgroup of $G$, and so $[\phi(U):U\cap\phi(U)]=[U\phi(U):U]$ and $[U:U\cap\phi(U)]=[U\phi(U):\phi(U)]$.

(2) We have that $$[V:U]=[V:U\phi(U)][U\phi(U):U]$$ and $$[V:\phi(U)]=[V:U\phi(U)][U\phi(U):\phi(U)].$$ Therefore, $$[V:U]=[V:\phi(U)]\frac{[U\phi(U):U]}{[U\phi(U):\phi(U)]},$$
and the conclusion follows from item (1).
\end{proof}

\begin{prop}\label{h:alg:phi:inverse}
Let $G$ be a locally compact group, $\phi\in \Aut(G)$ and $U\in\mathcal N(G)$. Then
\[H_{alg}(\phi^{-1},U)= H_{alg}(\phi,U)-\log\Delta(\phi).\]
In particular, if $G$ is a strongly compactly covered group, then 
\[h_{alg}(\phi^{-1})=h_{alg}(\phi)-\log\Delta(\phi).\]
\end{prop}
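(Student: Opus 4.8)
The plan is to relate the two trajectories $T_n(\phi^{-1},U)$ and $T_n(\phi,U)$ through the automorphism $\phi^{n-1}$, and then to account for the discrepancy between the relevant indices by means of the modulus. Since $U\in\mathcal N(G)$ is normal, Lemma \ref{anna:lemma} guarantees that the subgroups $\phi^k(U)$ ($k\in\N$) pairwise commute, so that both $T_n(\phi,U)$ and $T_n(\phi^{-1},U)$ lie in $\mathcal B(G)$ and the order of the factors in any product of their images is irrelevant. The first key step is the identity
\[\phi^{n-1}\big(T_n(\phi^{-1},U)\big)=\phi^{n-1}(U)\,\phi^{n-2}(U)\cdots\phi(U)\,U=T_n(\phi,U),\]
where the last equality uses precisely this commutativity to reindex the product. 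As $\phi^{n-1}\in\Aut(G)$ carries $U$ onto $\phi^{n-1}(U)$ and preserves indices of subgroups, this yields
\[[T_n(\phi^{-1},U):U]=\big[\phi^{n-1}(T_n(\phi^{-1},U)):\phi^{n-1}(U)\big]=[T_n(\phi,U):\phi^{n-1}(U)].\]

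To pass from the index over $\phi^{n-1}(U)$ to the index over $U$, I would apply Lemma \ref{Delta}(2) to the automorphism $\phi^{n-1}$ and the compact subgroup $V=T_n(\phi,U)$, which contains both $U$ and $\phi^{n-1}(U)$, hence also $U\phi^{n-1}(U)$. Using that $\Delta$ is a homomorphism, so $\Delta(\phi^{n-1})=\Delta(\phi)^{n-1}$, this gives
\[[T_n(\phi,U):U]=[T_n(\phi,U):\phi^{n-1}(U)]\cdot\Delta(\phi)^{n-1}=[T_n(\phi^{-1},U):U]\cdot\Delta(\phi)^{n-1}.\]
Taking logarithms, dividing by $n$, and letting $n\to\infty$ (invoking Lemma \ref{lem:conv}, which turns both limits superior into genuine limits since the relevant subgroups lie in $\mathcal N(G)$), the term $\tfrac{n-1}{n}\log\Delta(\phi)$ tends to $\log\Delta(\phi)$, and we obtain
\[H_{alg}(\phi^{-1},U)=H_{alg}(\phi,U)-\log\Delta(\phi).\]

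Finally, for the global statement I would take the supremum over $U\in\mathcal N(G)$ on both sides, which is legitimate by Equation \eqref{eq:algnormal} since $G$ is strongly compactly covered; because $\log\Delta(\phi)$ is a constant independent of $U$, it passes outside the supremum, yielding $h_{alg}(\phi^{-1})=h_{alg}(\phi)-\log\Delta(\phi)$. I expect the only genuinely delicate point to be the first identity $\phi^{n-1}(T_n(\phi^{-1},U))=T_n(\phi,U)$: everything downstream is bookkeeping with finite indices and the multiplicativity of the modulus, whereas this identity is where the normality of $U$ (through Lemma \ref{anna:lemma}) is essential, and one must check carefully that the reindexing of the product of the $\phi^k(U)$ is justified.
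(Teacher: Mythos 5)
Your proposal is correct and follows essentially the same route as the paper: the identity $\phi^{n-1}(T_n(\phi^{-1},U))=T_n(\phi,U)$ via Lemma \ref{anna:lemma}, then Lemma \ref{Delta}(2) with the multiplicativity of $\Delta$ to get $t_n=t_n^*\cdot\Delta(\phi)^{n-1}$. The only (immaterial) difference is at the end, where you divide $\log t_n$ by $n$ and pass to the limit directly, while the paper compares the stabilized ratios $\beta_n=t_{n+1}/t_n$ and $\beta_n^*=t_{n+1}^*/t_n^*$ from the proof of Lemma \ref{lem:conv}.
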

\begin{proof}
Let $U\in\mathcal N(G)$.
As in the proof of Lemma \ref{lem:conv}, for every $n\in\N_+$ let $t_n=[T_n(\phi,U):U]$ and $t_n^*=[T_n(\phi^{-1},U):U]$.
Moreover, $H_{alg}(\phi,U)=\log\beta$ and $H_{alg}(\phi^{-1},U)=\log \beta^*$, where $\beta$ and $\beta^*$ are respectively the values at which the sequences $\beta_n=\frac{t_{n+1}}{t_{n}}$ and $\beta^*_n=\frac{t_{n+1}^*}{t_{n}^*}$ stabilize (see Equation \eqref{eq:fir}). 

Let $n\in\N_+$. By Lemma \ref{anna:lemma} we have that
\[\phi^{n-1}(T_n(\phi^{-1},U))=T_n(\phi,U)\] 
is a compact subgroup of $G$. Since $\phi^{n-1}$ is an automorphism, Lemma \ref{Delta}(2) gives
\begin{equation*}\begin{split}
t_n^* &=[T_n(\phi^{-1},U):U]=[\phi^{n-1}(T_n(\phi^{-1},U)):\phi^{n-1}(U)]=\\&=[T_n(\phi,U):\phi^{n-1}(U)]=[T_n(\phi,U):U]\cdot\frac{1}{\Delta(\phi^{n-1})}=t_n\cdot\frac{1}{\Delta(\phi^{n-1})}.
\end{split}\end{equation*}
Therefore, since $\Delta$ is a homomorphism, for every sufficiently large $n\in\N_+$, $$\beta=\frac{t_{n+1}}{t_n}=\frac{t_{n+1}^*}{t_n^*}\cdot\frac{\Delta(\phi^{n})}{\Delta(\phi^{n-1})}=\beta^*\cdot\Delta(\phi).$$ Then, $\log\beta=\log\beta^*+\log\Delta(\phi)$, that is, the first assertion of the proposition.

The second equality in the thesis follows from the first one taking the supremum for $U\in\mathcal N(G)$ in view of Equation \eqref{eq:algnormal}.
\end{proof}

\section{Limit-free Formula}\label{sec:LF}

In this section we prove the so-called Limit-free Formula for the algebraic entropy. We start  introducing the following useful subgroups.

  
\begin{defi}\label{def:umin}
For a locally compact group $G$, $\phi\in \End(G)$ and $ U\in \mathcal N(G)$, let:
\ben \item $U^{(0)}=U;$  
\item $U^{(n+1)}=U\phi^{-1}U^{(n)}$ for every $n\in \N;$
\item $U^-=\bigcup_{n\in\N} U^{(n)}.$
\een
\end{defi}
Is is easy to prove by induction that $U^{(n+1)} = \phi^{-1}U^{(n)} U$ is an open normal subgroup of $G$ such that $U^{(n)} \leq U^{(n+1)}$ for every $n \in \N$, so that also $U^-$ is an open normal subgroup of $G$ for every $U\in \mathcal N(G).$
We collect some of the properties of $U^-.$  

\begin{lemma}\label{lem:pro}
Let $G$ be a locally compact group, $\phi\in \End(G)$ and $ U\in \mathcal N(G).$ Then:
\ben
\item $\phi^{-1}U^{-} \leq U^-$; 
\item if $H\leq G$ is such that $U\leq H$ and $\phi^{-1}H\leq H,$ then $U^{-}\leq H$;
\item $U^- = U\phi^{-1}U^-$;
\item the index $[U^- :\phi^{-1}U^-] = [U:U\cap \phi^{-1}U^-]$ is finite;
\item for every $n\in \N^{+}, \phi^{-n}T_n = \phi^{-1}U^{(n-1)}. $
\een
\end{lemma}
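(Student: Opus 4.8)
The plan is to derive items (1)--(4) from the recursive structure of the chain $\{U^{(n)}\}$ and the minimality of $U^-$, treating them as essentially routine, and to reserve the real work for the identity (5), which needs a separate induction on the trajectories $T_n$. Throughout I would use the facts recorded just before the lemma: each $U^{(n)}$ is an open normal subgroup, $U^{(n)}\le U^{(n+1)}$, and $U^-$ is open and normal in $G$.

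For (2) I would show by induction that $U^{(n)}\le H$ whenever $U\le H$ and $\phi^{-1}H\le H$: the base case is $U^{(0)}=U\le H$, and the step uses $U^{(n+1)}=U\phi^{-1}(U^{(n)})\le H\cdot\phi^{-1}(H)\le H$; taking the union gives $U^-\le H$. For (1) I would use that preimages commute with unions, so $\phi^{-1}(U^-)=\bigcup_n\phi^{-1}(U^{(n)})$, and since each $\phi^{-1}(U^{(n)})\le U\phi^{-1}(U^{(n)})=U^{(n+1)}\le U^-$, we get $\phi^{-1}(U^-)\le U^-$. For (3), the inclusion $U\phi^{-1}(U^-)\le U^-$ follows from $U\le U^-$ together with (1), and the reverse inclusion from checking $U^{(n)}=U\phi^{-1}(U^{(n-1)})\le U\phi^{-1}(U^-)$ for each $n$ and passing to the union.

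For (4) I would set $N:=\phi^{-1}(U^-)$, a normal subgroup of $G$ as the preimage of the normal subgroup $U^-$, and invoke (3) to write $U^-=UN$. The second isomorphism theorem then gives $U^-/N=UN/N\cong U/(U\cap N)$, which is precisely the claimed index equality $[U^-:\phi^{-1}U^-]=[U:U\cap\phi^{-1}U^-]$. Finiteness is a compactness argument: $U^-$ is open and $\phi$ continuous, so $N$ is open, whence $U\cap N$ is an open subgroup of the compact group $U$ and therefore has finite index.

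The main obstacle, and the heart of the matter, is (5), where one must remember that $\phi^{-1}$ denotes the \emph{preimage} (as $\phi$ need not be invertible), so it cannot be naively distributed over products. The key tool I would isolate first is the identity $\psi^{-1}(A\,\psi(B))=\psi^{-1}(A)\,B$, valid for any endomorphism $\psi$ and subsets $A,B\subseteq G$; it follows from a direct element chase using $\psi(g)\psi(b)^{-1}=\psi(gb^{-1})$. Rather than attacking (5) head on, I would prove the cleaner auxiliary claim $\phi^{-(n-1)}(T_n)=U^{(n-1)}$ by induction on $n$, so that (5) results from applying $\phi^{-1}$ to both sides. The base case is $T_1=U=U^{(0)}$; for the step I would use the decomposition $T_{n+1}=T_n\phi^{n}(U)$ together with the displayed identity for $\psi=\phi^{n}$ to obtain $\phi^{-n}(T_{n+1})=\phi^{-n}(T_n)\,U$, then rewrite $\phi^{-n}(T_n)=\phi^{-1}(\phi^{-(n-1)}(T_n))=\phi^{-1}(U^{(n-1)})$ via the inductive hypothesis, and finally use normality of $U$ to get $\phi^{-1}(U^{(n-1)})\,U=U\phi^{-1}(U^{(n-1)})=U^{(n)}$, closing the induction. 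The product-preimage identity is exactly what licenses the single controlled manipulation the induction requires.
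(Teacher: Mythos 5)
Your argument is correct. Items (1)--(4) follow the paper's proof essentially verbatim: the same induction for (2), the same union computation for (3), and the same use of (3) plus the second isomorphism theorem and openness of $\phi^{-1}U^-$ in the compact group $U$ for (4) (your extra observation that $\phi^{-1}(U^-)$ is normal in $G$ is harmless but not needed for the index count). The only genuine divergence is in (5): the paper proves $\phi^{-n}T_n=\phi^{-1}U^{(n-1)}$ directly by induction via a two-sided element chase, whereas you prove the shifted identity $\phi^{-(n-1)}T_n=U^{(n-1)}$ and deduce (5) by taking one more preimage, with the inductive step reduced to the isolated set-theoretic identity $\psi^{-1}\bigl(A\,\psi(B)\bigr)=\psi^{-1}(A)\,B$ applied to $\psi=\phi^{n}$, $A=T_n$, $B=U$. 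This is the same computation at heart --- the paper's forward inclusion is precisely the nontrivial direction of your identity --- but your packaging turns the chase into a one-line calculation, makes explicit that the only structural input is the normality of $U$ (used to commute $\phi^{-1}(U^{(n-1)})U=U\phi^{-1}(U^{(n-1)})$), and yields the cleaner intermediate statement $\phi^{-(n-1)}T_n=U^{(n-1)}$ as a bonus.
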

\begin{proof}
(1) Follows from the fact that $\phi^{-1}U^{(n)}\leq U^{(n+1)}$ for every $n\in \N.$

(2) It suffices to show that $U^{(n)}\leq H$ for every $n\in \N$. If $n=0$, then $U=U^{(0)}\leq H$, while for the inductive step, use $\phi^{-1}U^{(n)}\leq \phi^{-1} H \leq H$.

(3) We have already noted above that $U^{(n)}\leq U^{(n+1)}$ for every $n\in N$, so
\begin{equation*}
U \phi^{-1}U^- = U\phi^{-1}\bigcup_{n\in\N} U^{(n)} = U\bigcup_{n\in\N}\phi^{-1}U^{(n)} = 
\bigcup_{n\in\N}(U\phi^{-1}U^{(n)})  = \bigcup_{n\in\N}U^{(n+1)} 
= U^-.
\end{equation*}

(4) Using property (3) we obtain
\begin{equation*}
U^-/\phi^{-1}U^- =(U \phi^{-1}U^-)/\phi^{-1}U^-\cong U/(U\cap \phi^{-1}U^-),
\end{equation*}
and thus $[U^- :\phi^{-1}U^-]=  [U:U\cap \phi^{-1}U^-].$ The quotient $U/(U\cap \phi^{-1}U^-)$ is finite, since $U\cap \phi^{-1}U^-$ is an open subgroup of the compact group $U$, as $\phi^{-1}U^-$ is open in $G$.

(5) For $n=1$ the assertion follows from the equalities $T_1=U=U^{(0)}.$  We prove the assertion for $n+1$ assuming  that it holds true for $n\in \N_+.$

To see first that $\phi^{-n-1}T_{n+1} \subseteq \phi^{-1}U^{(n)}$, fix $x\in \phi^{-n-1}T_{n+1}.$ So, $\phi^{n+1}(x)\in T_{n+1}=T_n\phi^nU.$ This implies that there exists  $y\in U$ such that $\phi^{n+1}(x)\in T_n\phi^n(y)$, that is,
$\phi^n(\phi(x)y^{-1})\in T_n.$ By the inductive hypothesis,  $\phi(x)y^{-1}\in \phi^{-n}T_n=\phi^{-1}U^{(n-1)}.$
Hence, $$\phi(x)=(\phi(x)y^{-1})y\in \phi^{-1}(U^{(n-1)})U = U \phi^{-1}(U^{(n-1)}) = U^{(n)}.$$ This means that $x\in \phi^{-1}U^{(n)},$ as needed. 

For the converse inclusion let $x\in \phi^{-1}U^{(n)}.$ By the inductive hypothesis, $$\phi(x)\in U^{(n)}=U\phi^{-1}U^{(n-1)}=U\phi^{-n}T_n.$$ It follows that $$\phi^{n+1}(x)=\phi^{n}(\phi(x))\in \phi^n(U)T_n=T_{n+1},$$ and thus, $x\in \phi^{-n-1}T_{n+1}$. 
\end{proof}

In the following result, we generalize the Limit-free Formula for the algebraic entropy of endomorphism of torsion abelian groups, from \cite{DG-lf,Y}.

\begin{prop}[Limit-free Formula] \label{prop:limit-free}
Let $G$ be a locally compact group. If $\phi\in \End(G)$ and $ U\in \mathcal N(G)$, then $$H_{alg} (\phi,U)= \log[U^- :\phi^{-1}U^-].$$
\end{prop}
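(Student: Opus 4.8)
The plan is to compute $H_{alg}(\phi,U)$ via the formula from Lemma \ref{lem:conv}, namely
\[
H_{alg}(\phi,U)=\lim_{n\to\infty}\frac{\log[T_n:U]}{n},
\]
and to identify the stabilized ratio $\beta=[T_{n+1}:T_n]$ (for large $n$) with the index $[U^-:\phi^{-1}U^-]$. Since $H_{alg}(\phi,U)=\log\beta$ by Equation \eqref{eq:fir}, it suffices to prove that the eventually-constant value of $[T_{n+1}:T_n]$ equals $[U^-:\phi^{-1}U^-]$.

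First I would use the key combinatorial identity Lemma \ref{lem:pro}(5), which states $\phi^{-n}T_n=\phi^{-1}U^{(n-1)}$. Applying $\phi^{-1}$ to the equation $T_{n+1}=T_n\,\phi^n(U)$ (or working directly with the subgroups $T_n$, which are genuine subgroups by Lemma \ref{anna:lemma}), one sees that the index $[T_{n+1}:T_n]$ is controlled by the chain of subgroups $U^{(n)}$. More precisely, since $\phi^{-n}$ is applied, I would translate $[T_{n+1}:T_n]$ into an index involving the $U^{(n)}$: the point is that $U^{(n)}=U\phi^{-1}U^{(n-1)}$ increases to $U^-$, and because $U^-$ is an open subgroup of $G$ containing the compact $U$, the chain $U^{(0)}\leq U^{(1)}\leq\cdots$ of open subgroups must stabilize (each step multiplies the index by a finite factor, and by the weakly-decreasing argument of Lemma \ref{lem:conv} these factors stabilize). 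Thus $U^{(n)}=U^-$ for all large $n$, and for such $n$ the ratio $[U^{(n+1)}:U^{(n)}]$ equals $[U^-:\phi^{-1}U^-]$ using Lemma \ref{lem:pro}(3)--(4).

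The bridge between the $T_n$ side and the $U^{(n)}$ side is the identity $\phi^{-n}T_n=\phi^{-1}U^{(n-1)}$ from Lemma \ref{lem:pro}(5). The cleanest route is to show that $[T_{n+1}:T_n]=[U^{(n)}:\phi^{-1}U^{(n-1)}]$ for each $n$, and then that this equals $[U^-:\phi^{-1}U^-]$ once $U^{(n-1)}$ has stabilized to $U^-$: indeed $[U^{(n)}:\phi^{-1}U^{(n-1)}]=[U\phi^{-1}U^{-}:\phi^{-1}U^-]=[U^-:\phi^{-1}U^-]$ by Lemma \ref{lem:pro}(3). To obtain the index equality for the $T_n$ one uses that $\phi^{n-1}$ (or $\phi^n$) is not assumed injective, so I would be careful to invoke the preimage form rather than apply an automorphism; the relation $T_{n+1}=T_n\phi^n(U)$ together with $\phi^{-n}T_n=\phi^{-1}U^{(n-1)}$ lets one match cosets of $T_n$ in $T_{n+1}$ with cosets of $\phi^{-1}U^{(n-1)}$ in $U^{(n)}$.

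The main obstacle I anticipate is precisely this last coset-matching step, i.e.\ verifying that the index $[T_{n+1}:T_n]$ is genuinely transported by $\phi^{-n}$ to $[U^{(n)}:\phi^{-1}U^{(n-1)}]$ without any injectivity hypothesis on $\phi$. The naive argument ``apply $\phi^{-n}$'' could change indices if $\phi$ has nontrivial kernel, so I would instead argue directly: a transversal of $T_n$ in $T_{n+1}=T_n\phi^n(U)$ is given by suitable representatives in $\phi^n(U)$, and Lemma \ref{lem:pro}(5) shows these correspond bijectively to a transversal of $\phi^{-1}U^{(n-1)}$ in $U^{(n)}=U\phi^{-1}U^{(n-1)}$, so the two indices coincide. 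Once this is established, combining $H_{alg}(\phi,U)=\log\beta$ with $\beta=[U^-:\phi^{-1}U^-]$ (finite by Lemma \ref{lem:pro}(4)) yields the stated Limit-free Formula.
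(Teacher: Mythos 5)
Your overall strategy is the same as the paper's: reduce to showing that the stabilized value $\beta=[T_{n+1}:T_n]$ equals $[U^-:\phi^{-1}U^-]$, transport $[T_{n+1}:T_n]$ to $[U^{(n)}:\phi^{-1}U^{(n-1)}]$ via Lemma \ref{lem:pro}(5) (your transversal-matching is exactly the paper's well-defined injective surjection $x\,\phi^{-1}U^{(n-1)}\mapsto \phi^n(x)T_n$, and you are right that one must argue this way rather than ``apply $\phi^{-n}$''), and then identify the latter index with $[U^-:\phi^{-1}U^-]$ for large $n$. However, there is a genuine gap in the middle step: you claim that the chain $U^{(0)}\leq U^{(1)}\leq\cdots$ stabilizes, so that $U^{(n)}=U^-$ for all large $n$. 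This is false in general. Take the left shift ${}_{\Z_p}\beta$ of Example \ref{ex:bs} with $U=\prod_{n\leq 0}\Z_p$: there $U^{(n)}=\phi^{-n}U$ strictly increases with $[U^{(n+1)}:U^{(n)}]=p$ for every $n$, and $U^-=G$. The fact that $U^-$ is open and contains the compact $U$ gives you nothing, because $U^-$ itself need not be compact; and the ``factors stabilize'' observation does not help, since they may stabilize at a value greater than $1$ (as in this example), in which case the chain never becomes stationary. Note also that in this example $[U^{(n+1)}:U^{(n)}]=p$ while $[U^-:\phi^{-1}U^-]=1$, so the identification you propose at this point is not merely unjustified but wrong as stated.

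The correct stabilization argument --- and this is what the paper does --- takes place inside the \emph{compact} group $U$: the subgroups $U\cap\phi^{-1}U^{(n)}$ form an increasing chain of open subgroups of $U$, so the indices $[U:U\cap\phi^{-1}U^{(n)}]$ are weakly decreasing positive integers and must stabilize, whence $U\cap\phi^{-1}U^{(n)}=U\cap\phi^{-1}U^-$ for all large $n$. Combined with Lemma \ref{lem:pro}(4), which gives $[U^{(n+1)}:\phi^{-1}U^{(n)}]=[U\,\phi^{-1}U^{(n)}:\phi^{-1}U^{(n)}]=[U:U\cap\phi^{-1}U^{(n)}]$ and $[U^-:\phi^{-1}U^-]=[U:U\cap\phi^{-1}U^-]$, this yields $[U^{(n+1)}:\phi^{-1}U^{(n)}]=[U^-:\phi^{-1}U^-]$ for large $n$ without ever needing $U^{(n)}$ to equal $U^-$. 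If you replace your stabilization claim by this one, the rest of your argument goes through and coincides with the paper's proof.
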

\begin{proof}
By Equation \eqref{eq:fir},  $H_{alg} (\phi, U)=\log \beta,$ where $\beta=\beta_n = [T_{n+1}:T_n]$ for $n\geq n_0$ for some $n_0\in \N.$ So, it suffices to show that $\beta=[U^- :\phi^{-1}U^-].$ 

For every $n\in \N$, 
\[U\cap \phi^{-1}U= U\cap \phi^{-1}U^{(0)}\leq U\cap \phi^{-1}U^{(n)}\leq U\cap \phi^{-1}U^{(n+1)} \leq U,\] 
so the sequence of the indices $\{[U: U\cap \phi^{-1}U^{(n)}]\}_{n\in\N}$ is weakly decreasing, hence it stabilizes.
It follows that the sequence $\{U\cap \phi^{-1}U^{(n)}\}_{n\in\N}$ of subgroups of $U$ also stabilizes, and let $n_1\in \N$ be such that $U\cap \phi^{-1}U^{(n)}=U\cap \phi^{-1}U^{(n_1)}$ for every $n\geq n_1$. 

For $m\geq n_1$ we have  $$U\cap \phi^{-1}U^{(m)}=\bigcup_{n\in\N}(U\cap \phi^{-1}U^{(n)})=U\cap \bigcup_{n\in\N}\phi^{-1}U^{(n)}=U\cap \phi^{-1}\bigcup_{n\in\N}U^{(n)}= U\cap \phi^{-1}U^-.$$   Therefore, for every $n\geq \max\{n_0,n_1\},$  Lemma \ref{lem:pro}(4)  gives 
\begin{equation*}\begin{split}
[U^- :\phi^{-1}U^-]=&  [U:U\cap \phi^{-1}U^-]=[U:U\cap \phi^{-1}U^{(n)}]=\\
&=[U\phi^{-1}U^{(n)}:\phi^{-1}U^{(n)}]=[U^{(n+1)}:\phi^{-1}U^{(n)}].
\end{split}\end{equation*}

To conclude, we show that $[U^{(n)}:\phi^{-1}U^{(n-1)}]=[T_{n+1}:T_n]=\beta$. 
Observe that 
\[U^{(n)}/\phi^{-1}U^{(n-1)}=(U\phi^{-1}U^{(n-1)})/\phi^{-1}U^{(n-1)},\] and that $T_{n+1} /T_n=(\phi^{n}(U)T_n)/T_n$. 
Consider the mapping 
\[\Phi:(U\phi^{-1}U^{(n-1)})/\phi^{-1}U^{(n-1)}\to (\phi^{n}(U)T_n)/T_n, \ x\phi^{-1}U^{(n-1)}\mapsto \phi^{n}(x)T_n.\]
By Lemma \ref{lem:pro}(5), $\Phi$ is well-defined and injective, and clearly $\Phi$ is a surjective homomorphism.
\end{proof}

The following result was inspired by Fact \ref{LFcortop} below (proved in \cite{GBV}) which is a similar result for the topological entropy.

\begin{prop}\label{LFcoralg}
Let $G$ be a strongly compactly covered group, and $\phi\in \End(G)$. Then
$$h_{alg}(\phi)=\sup\{\log[A:\phi^{-1}A]: A\unlhd G,\ A\ \text{open},\  \phi^{-1}A\leq A,\ [A:\phi^{-1}A]<\infty\}=:s.$$
\end{prop}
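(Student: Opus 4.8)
The plan is to prove the equality $h_{alg}(\phi) = s$ by establishing the two inequalities $h_{alg}(\phi) \leq s$ and $h_{alg}(\phi) \geq s$ separately, using the Limit-free Formula from Proposition \ref{prop:limit-free} as the central tool. The key observation is that the Limit-free Formula already expresses $H_{alg}(\phi, U)$ as $\log[U^- : \phi^{-1}U^-]$, and by Lemma \ref{lem:pro} the subgroup $A := U^-$ is precisely of the type admitted in the supremum defining $s$: it is open and normal in $G$ (as noted after Definition \ref{def:umin}), it satisfies $\phi^{-1}U^- \leq U^-$ by Lemma \ref{lem:pro}(1), and the index $[U^- : \phi^{-1}U^-]$ is finite by Lemma \ref{lem:pro}(4).

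For the inequality $h_{alg}(\phi) \leq s$, I would start from Equation \eqref{eq:algnormal}, which gives $h_{alg}(\phi) = \sup\{H_{alg}(\phi, U) : U \in \mathcal N(G)\}$ since $G$ is strongly compactly covered. For each $U \in \mathcal N(G)$, Proposition \ref{prop:limit-free} yields $H_{alg}(\phi, U) = \log[U^- : \phi^{-1}U^-]$. Taking $A = U^-$, which is admissible in the supremum defining $s$ by the properties just recalled, we get $H_{alg}(\phi, U) = \log[A : \phi^{-1}A] \leq s$. Taking the supremum over $U \in \mathcal N(G)$ gives $h_{alg}(\phi) \leq s$.

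For the reverse inequality $h_{alg}(\phi) \geq s$, the task is to show that every admissible $A$ in the supremum is ``reached'' by some trajectory computation. Fix an open normal subgroup $A \unlhd G$ with $\phi^{-1}A \leq A$ and $[A : \phi^{-1}A] < \infty$. Since $A$ is open in the strongly compactly covered group $G$, and since $\mathcal N(G)$ is cofinal in $\mathcal C(G)$, I expect to find some $U \in \mathcal N(G)$ with $U \leq A$; more precisely I want to choose $U$ so that the pair $(U, A)$ interacts well. The natural strategy is to pick $U \in \mathcal N(G)$ contained in $A$ and large enough that $U^- = A$, or at least large enough that $\log[U^- : \phi^{-1}U^-] = \log[A : \phi^{-1}A]$. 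By Lemma \ref{lem:pro}(2), since $U \leq A$ and $\phi^{-1}A \leq A$, we automatically have $U^- \leq A$; the issue is to force the reverse containment, or to relate the two indices directly. I would try to show that for a suitable choice of $U$ one has $A = U\phi^{-1}A$ (so that $A$ itself satisfies the recursive relation defining $U^-$) and then argue $U^- = A$, which by Proposition \ref{prop:limit-free} gives $H_{alg}(\phi, U) = \log[A : \phi^{-1}A]$, whence $s \leq h_{alg}(\phi)$.

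The main obstacle will be this reverse inequality, specifically the construction of a compact open normal subgroup $U$ with $U^- = A$ (or with the matching index) for an arbitrary admissible $A$. The difficulty is that $A$ need not be compact, so one cannot take $U = A$; one must descend to a compact open $U \in \mathcal N(G)$ while preserving the index $[A : \phi^{-1}A]$. I would exploit that $[A : \phi^{-1}A]$ is finite to choose finitely many coset representatives of $\phi^{-1}A$ in $A$, and then, using cofinality of $\mathcal N(G)$ in $\mathcal C(G)$ (Proposition \ref{cclca:structure}), take $U \in \mathcal N(G)$ containing these representatives and contained in $A$. The key verification will be that $A = U\phi^{-1}A$, which combined with $\phi^{-1}A \leq A$ and Lemma \ref{lem:pro}(2),(3) should yield $U^- = A$, completing the proof.
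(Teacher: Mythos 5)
Your first inequality and your construction of $U$ are essentially those of the paper: one chooses finitely many coset representatives of $\phi^{-1}A$ in $A$, encloses them in some member of $\mathcal N(G)$ (via Proposition \ref{cclca:structure}(3); note a finite set is not itself in $\mathcal C(G)$ unless $G$ is discrete), intersects with $A$ to get $U\in\mathcal N(G)$ with $U\leq A$, and obtains $A=U\,\phi^{-1}A$. (The paper routes this through a finitely generated $F\leq A$, a compact $K\supseteq F$, and Lemma \ref{cofinal:families}(1), but the effect is the same.) The genuine gap is your last step: from $A=U\phi^{-1}A$, $U\leq A$ and $\phi^{-1}A\leq A$ you cannot conclude $U^-=A$. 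Lemma \ref{lem:pro}(2) gives only $U^-\leq A$, and the reverse containment is false in general: take $\phi=\mathrm{id}_G$ on an infinite discrete torsion FC-group, $A=G$ and $U$ any finite normal subgroup; then $\phi^{-1}A=A$, so $A=U\phi^{-1}A$ holds trivially, yet $U^-=U\neq A$. The point is that $A=U\phi^{-1}A$ does not make $A$ the \emph{smallest} inversely $\phi$-invariant subgroup containing $U$, which is what $U^-=A$ would require.

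What you actually need --- and what the paper proves --- is only the index inequality $[U^-:\phi^{-1}U^-]\geq[A:\phi^{-1}A]$; you mention this weaker goal in passing, but your concrete plan does not deliver it. It does follow from your identity $A=U\phi^{-1}A$ by ``saturating with $\phi^{-1}A$'': since $U^-\leq A$ implies $\phi^{-1}U^-\leq\phi^{-1}A$, one has
$$[U\phi^{-1}U^-:\phi^{-1}U^-]\ \geq\ \bigl[(U\phi^{-1}U^-)\phi^{-1}A:(\phi^{-1}U^-)\phi^{-1}A\bigr]\ =\ [U\phi^{-1}A:\phi^{-1}A]\ =\ [A:\phi^{-1}A],$$
and $U\phi^{-1}U^-=U^-$ by Lemma \ref{lem:pro}(3). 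Replacing your final sentence by this computation closes the argument; the inequality $\geq$ suffices, since one then takes the supremum over admissible $A$.
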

\begin{proof}
Using Proposition \ref{prop:limit-free} we obtain 
\[h_{alg}(\phi)=\sup\{H_{alg}(\phi, U): U\in \mathcal N(G)\}=\sup\{\log[U^-:\phi^{-1}U^-]: U\in \mathcal N(G)\}.\]
Then  $h_{alg}(\phi)\leq s$, as for every $U\in \mathcal N(G)$, $U^-$ is an open normal subgroup of $G$ such that $\phi^{-1}U^-\leq U^-$ and $[U^-:\phi^{-1}U^-]<\infty$ by Lemma \ref{lem:pro}(4).  

For the converse inequality, fix an open normal subgroup $A$ of $G$ such that $\phi^{-1}A\leq A$ and $[A:\phi^{-1}A]<\infty.$ Our aim is to find $U\in \mathcal N(G)$ such that $[U^-:\phi^{-1}U^-]\geq [A:\phi^{-1}A]$. Since $[A:\phi^{-1}A]<\infty$, there exists a finitely generated subgroup $F\leq A$ such that $A=\phi^{-1}(A)F$. As $G$ is compactly covered by Proposition \ref{cclca:structure}, there exists a compact subgroup $K\leq G$ such that $F\leq K.$ Clearly, $A\cap K\in \mathcal N(K).$ So, Lemma \ref{cofinal:families}(1) implies that there exists $M\in \mathcal N(G)$ such that $A\cap K\subseteq M\cap K.$ We claim that 
\begin{equation} \label{eq:coadd}
A=\phi^{-1}(A)U, 
\end{equation} 
where $U=M\cap A\in \mathcal N(G).$ First observe that  
$$F\subseteq A\cap K = M\cap K\cap A\subseteq M\cap A=U\subseteq A.$$ 
Taking also into account that $\phi^{-1}A\leq A$ we obtain
\[A=\phi^{-1}(A)F\subseteq \phi^{-1}(A)U\subseteq \phi^{-1}(A)A=A,\] which proves Equation \eqref{eq:coadd}.

We now show that $[U^-:\phi^{-1}U^-]\geq [A:\phi^{-1}A].$
Since $U\leq A$ and $\phi^{-1}A\leq A$, Lemma \ref{lem:pro}(2) gives $U^-\leq A$, so  $\phi^{-1}U^-\leq \phi^{-1}A\leq A$. 
Now  by Equation \eqref{eq:coadd}, 
$$[U\phi^{-1}U^-:\phi^{-1}U^-]\geq [(U\phi^{-1}(U^-))\phi^{-1}A:\phi^{-1}(U^-)\phi^{-1}A]=[A:\phi^{-1}A].$$
Finally, $[U^-:\phi^{-1}U^-]=[U\phi^{-1}U^-:\phi^{-1}U^-]$ by Lemma \ref{lem:pro}(3).
\end{proof}

\section{Bridge Theorem}\label{BT-sec}

For a locally compact abelian group $G$, its Pontryagin dual $\widehat G$ is the group of all continuous homomorphism $G \to \T$, equipped with the pointwise operation, and the compact open topology; by Pontryagin duality, $\widehat G$ is a topological group such that $\widehat {\widehat G\, }$ is canonically topologically isomorphic to $G$. 

For a subgroup $H$ of $G$, the annihilator of $H$ in $\widehat G$ is 
$H^\perp = \{ \chi \in \widehat G : \chi(H) = 0 \}$,
while for a subgroup $A$ of $\widehat G$, the annihilator of $A$ in $G$ is $A^\top = \{ x \in G : \chi(x) = 0 \text{ for every } \chi \in A \}$. 

Then, for every closed subgroup $H$ of $G$, we have $(H^\perp )^\top = H$, while $H\leq G$ is compact if and only if $H^\perp \leq \widehat G$ is open. In particular, $H \in \mathcal B(G)$ if and only if $H^\perp \in \mathcal B( \widehat G)$.

Finally, if $\{ H_i \}_{i \in I}$ is a family of open subgroups of $G$, then
\begin{equation*}
\Big( \sum_{i \in I} H_i \Big)^\perp = \bigcap_{i \in I} H_i^\perp 
\text{\ \ and \ \ }
\Big( \bigcap_{i \in I} H_i \Big)^\perp = \sum_{i \in I} H_i^\perp.
\end{equation*}

In what follows, we need also the following standard properties of Pontryagin duality.

\begin{lemma}\label{dual:of:quotient}
Let $B\leq A$ be closed subgroups of a locally compact abelian group $G$. Then $\widehat{ A/B}$ is topologically isomorphic to $B^\perp/A^\perp$.
\end{lemma}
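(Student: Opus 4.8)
The plan is to prove the duality isomorphism $\widehat{A/B}\cong B^\perp/A^\perp$ by combining two standard exact-sequence facts, each of which is itself essentially a special case of the general duality of closed subgroups and quotients. First I would recall the basic dual-sequence principle: for a closed subgroup $H$ of a locally compact abelian group $L$, the restriction map $\widehat{L}\to\widehat{H}$ is a continuous surjection (by extendability of characters, a consequence of Pontryagin duality and the injectivity of $\T$) whose kernel is exactly the annihilator $H^\perp$, so that $\widehat{H}\cong \widehat{L}/H^\perp$. Dually, the inflation map $\widehat{L/H}\to\widehat{L}$ (precomposition with the quotient projection) is a topological isomorphism onto the closed subgroup $H^\perp$, giving $\widehat{L/H}\cong H^\perp$. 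I would cite these as the standard properties of Pontryagin duality already invoked in the excerpt (the identities $(H^\perp)^\top=H$ and the compact/open correspondence are in the same spirit), so they may be used freely.

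Next I would apply these two facts in sequence to the configuration $B\le A\le G$. The quotient $A/B$ is a closed subgroup of $G/B$, so the first principle applied inside the ambient group $G/B$ yields
\begin{equation*}
\widehat{A/B}\cong \widehat{G/B}\big/ (A/B)^{\perp_{G/B}},
\end{equation*}
where the annihilator is taken in $\widehat{G/B}$. I would then identify $\widehat{G/B}$ with $B^\perp$ via the inflation isomorphism (the second principle applied to $B\le G$), and under this identification track where the annihilator $(A/B)^{\perp_{G/B}}$ goes: a character of $G/B$ annihilates $A/B$ precisely when the corresponding character of $G$ (vanishing on $B$) vanishes on all of $A$, i.e. lies in $A^\perp$. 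Since $B\le A$ forces $A^\perp\le B^\perp$, the image is exactly $A^\perp$, and the quotient becomes $B^\perp/A^\perp$, yielding the claimed topological isomorphism.

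The main obstacle I anticipate is not the algebraic bookkeeping but verifying that all the maps involved are topological isomorphisms, not merely abstract group isomorphisms: one must ensure the inflation map $\widehat{G/B}\to B^\perp$ is a homeomorphism onto its image and that the induced quotient map respects the compact-open topologies. This is where the standing assumption of local compactness and the full strength of Pontryagin duality are needed, since restriction and inflation maps are continuous by general nonsense but their inverses require the duality theorem (the reflexivity $\widehat{\widehat{G}}\cong G$) to be continuous as well. I would handle this by observing that each map in the chain is a continuous bijection between locally compact groups arising from the duality functor applied to a short exact sequence of closed-subgroup/quotient type, and the duality functor sends such exact sequences to topologically exact sequences — the one genuinely nontrivial input being the openness of the quotient maps, which follows from the open mapping theorem for locally compact groups.
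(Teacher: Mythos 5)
Your argument is correct and is the standard derivation: realize $A/B$ as a closed subgroup of $G/B$, use surjectivity of restriction to get $\widehat{A/B}\cong \widehat{G/B}\big/(A/B)^{\perp}$, identify $\widehat{G/B}$ with $B^\perp$ via inflation, and check that this identification carries $(A/B)^{\perp}$ onto $A^\perp$ (using $A^\perp\leq B^\perp$), with the topological issues settled by character extension and the open mapping theorem. The paper itself states this lemma without proof, as one of the ``standard properties of Pontryagin duality,'' so there is no argument in the text to compare against; your proof supplies exactly the standard reasoning the authors are implicitly invoking.
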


For a continuous group homomorphism $\phi : G \to H$, its dual is the continuous group homomorphism $\hat \phi : \widehat H \to \widehat G$, mapping $f \mapsto f \circ \phi$.

\begin{lemma}\label{annihilator:of:preimage}
Let $G$ be a locally compact abelian group, $\phi\in \End(G)$ and $U$ be a closed subgroup of $G$. Then $( \phi^{-1}U )^\perp = \widehat \phi( U^\perp )$.
\end{lemma}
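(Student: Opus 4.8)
The plan is to prove the two inclusions separately, obtaining the routine one by a direct computation and the delicate one from the annihilator bijection. The inclusion $\widehat\phi(U^\perp)\subseteq(\phi^{-1}U)^\perp$ is purely definitional: if $f\in U^\perp$ and $x\in\phi^{-1}U$, then $\phi(x)\in U$, so $\widehat\phi(f)(x)=f(\phi(x))=0$; hence $\widehat\phi(f)$ annihilates $\phi^{-1}U$. This uses only the definition of the dual homomorphism $\widehat\phi\colon f\mapsto f\circ\phi$ and of the annihilator, and it holds in complete generality.

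For the reverse inclusion I would compare the two subgroups of $\widehat G$ by computing their annihilators $(\cdot)^\top$ in $G$ and then invoking that $\perp$ and $\top$ restrict to mutually inverse, order-reversing bijections between the closed subgroups of $G$ and those of $\widehat G$. On one side, since $\phi^{-1}U$ is closed, $((\phi^{-1}U)^\perp)^\top=\phi^{-1}U$. On the other side, for $A=\widehat\phi(U^\perp)$ one has $A^\top=\{x\in G:f(\phi(x))=0\text{ for all }f\in U^\perp\}=\{x\in G:\phi(x)\in(U^\perp)^\top\}=\phi^{-1}U$, where I used $(U^\perp)^\top=U$, valid because $U$ is closed. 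Thus $\widehat\phi(U^\perp)$ and $(\phi^{-1}U)^\perp$ have the same annihilator $\phi^{-1}U$ in $G$; since taking $\top$ of a closed subgroup followed by $\perp$ returns the subgroup, applying $\perp$ to the equality of annihilators yields $(\phi^{-1}U)^\perp=\overline{\widehat\phi(U^\perp)}$, the closure of the image.

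The main obstacle, and the only nontrivial point, is to remove the closure, i.e.\ to establish that $\widehat\phi(U^\perp)$ is already closed in $\widehat G$; the annihilator correspondence is injective only on \emph{closed} subgroups, so without this one gets equality only up to closure. I would resolve it in the setting in which the lemma is used, where $U$ is open: by the duality between compact and open subgroups ($H$ is compact if and only if $H^\perp$ is open, and dually $U$ open forces $U^\perp$ compact), the subgroup $U^\perp$ is then compact, so its continuous image $\widehat\phi(U^\perp)$ is compact and hence closed. The closure may therefore be dropped and equality $(\phi^{-1}U)^\perp=\widehat\phi(U^\perp)$ follows. I would record explicitly that this closedness is what makes the identity exact, since it is exactly the hypothesis satisfied by the open subgroups $U^-$ and their preimages that occur in the Bridge Theorem.
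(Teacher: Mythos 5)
The paper states this lemma without proof, presenting it as a standard property of Pontryagin duality, so there is no argument of the authors' to compare yours against; your proof is the standard double-annihilator argument and is correct where it applies. Moreover, your insistence on the closure issue is not pedantry but a genuine necessity: for an arbitrary closed subgroup $U$ only $(\phi^{-1}U)^\perp=\overline{\widehat\phi(U^\perp)}$ holds, and the identity as literally stated can fail. Indeed, taking $U=\{0\}$ it reduces to the claim that $\widehat\phi$ has closed image, which is false for, e.g., $G=\T\times\Z$ and $\phi(t,n)=(n\alpha,0)$ with $\alpha$ generating a dense cyclic subgroup of $\T$: there $(\ker\phi)^\perp=\{0\}\times\T$ while $\widehat\phi(\widehat G\,)=\{0\}\times\Z\alpha$ is dense but proper. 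Your repair --- observing that in the only place the lemma is invoked (Theorem~\ref{BT}) the subgroup $U=A$ is open, so $U^\perp$ is compact and its continuous image is closed --- is exactly right, and it would improve the paper to add the openness hypothesis (or the closure bar) to the statement of the lemma itself.
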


\begin{remark}\label{cc-td}
For a topological abelian group $G$, the subgroup 
$$B(G) = \sum \{ K\leq G : K \text{ compact} \}$$
is the biggest compactly covered subgroup of $G$, so $G$ is compactly covered exactly when $B(G) = G$.
When $G$ is locally compact abelian, as $B(G)^\perp = c( \widehat G)$, one has that $G$ is compactly covered if and only if $\widehat G$ is totally disconnected.
\end{remark}

The following result is a consequence of the Limit-free Formula for the topological entropy proved in \cite[Proposition 3.9]{GBV}.
\begin{fact}\cite[Equation (3.10)]{GBV}\label{LFcortop}
Let $G$ be a totally disconnected locally compact group and $\phi\in\End(G)$. Then
$$h_{top}(\phi)=\sup\{\log[\phi M:M]: M\leq G,\ M\ \text{compact},\ M\subseteq\phi M,\ [\phi M:M]<\infty\}.$$
\end{fact}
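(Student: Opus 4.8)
The plan is to prove this as the topological-entropy mirror of the algebraic Limit-free Formula (Proposition~\ref{prop:limit-free}) and its consequence (Proposition~\ref{LFcoralg}); this is precisely the content of \cite[Proposition 3.9 and Equation (3.10)]{GBV}, so I sketch the strategy by exploiting the duality with the algebraic case treated above. First I would recall the setup: since $G$ is totally disconnected and locally compact, van Dantzig's theorem provides a base at $e_G$ of compact open subgroups, so that $h_{top}(\phi)=\sup\{H_{top}(\phi,U): U \text{ a compact open subgroup of } G\}$, where for such $U$ one uses the \emph{cotrajectories} $C_n(\phi,U)=\bigcap_{i=0}^{n-1}\phi^{-i}U$. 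Each $\phi^{-i}U$ is an open subgroup by continuity of $\phi$, so $C_n(\phi,U)$ is a compact open subgroup, and $H_{top}(\phi,U)=\lim_n \frac1n\log[U:C_n(\phi,U)]$. Dually to Lemma~\ref{lem:conv}, the preliminary step is to show that the finite indices $\delta_n=[C_n(\phi,U):C_{n+1}(\phi,U)]$ form a weakly decreasing sequence (using $C_{n+1}=U\cap\phi^{-1}C_n$ together with the facts that preimages do not increase indices and that intersecting with $U$ can only decrease them), hence eventually stabilize to some $\delta$ with $H_{top}(\phi,U)=\log\delta$.

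The heart of the argument is a limit-free formula for the single-subgroup entropy $H_{top}(\phi,U)$, dual to Proposition~\ref{prop:limit-free}. Mirroring Definition~\ref{def:umin} by replacing increasing unions of products with decreasing intersections and $\phi^{-1}$ with $\phi$, I would set $U_{[0]}=U$, $U_{[n+1]}=U\cap\phi(U_{[n]})$, and $U_+=\bigcap_{n\in\N}U_{[n]}$. A short induction shows each $U_{[n]}$ is a compact subgroup with $U_{[n+1]}\leq U_{[n]}$, and that $U_+$ is the largest compact subgroup $M\leq U$ satisfying $M\subseteq\phi M$ (this is the dual universal property of $U^-$ in Lemma~\ref{lem:pro}(2)); moreover, since $\phi$ commutes with decreasing intersections of compact sets, one gets $U_+\subseteq\phi U_+$. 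The goal of this step is the identity $H_{top}(\phi,U)=\log[\phi U_+:U_+]$, to be obtained exactly as $\beta=[U^-:\phi^{-1}U^-]$ was identified in Proposition~\ref{prop:limit-free}: one shows the stable value $\delta$ coincides with $[\phi U_+:U_+]$ by transporting the quotients through the images $\phi^n(C_n(\phi,U))$ and constructing a well-defined isomorphism between the relevant finite index sets, analogous to the map $\Phi$ used there.

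With the single-subgroup formula available, the stated equality follows as in Proposition~\ref{LFcoralg}. Let $s$ denote the supremum on the right-hand side. For $h_{top}(\phi)\leq s$, note that each $U_+$ is a compact subgroup with $U_+\subseteq\phi U_+$ and $[\phi U_+:U_+]<\infty$, so $H_{top}(\phi,U)=\log[\phi U_+:U_+]\leq s$, and one takes the supremum over $U$. For the reverse inequality $h_{top}(\phi)\geq s$, given a compact $M$ with $M\subseteq\phi M$ and $[\phi M:M]<\infty$, I would use total disconnectedness to choose a compact open subgroup $U\supseteq M$ and then, invoking the maximality property of $U_+$ together with the finite-index relation $M\subseteq\phi M$, argue that $[\phi U_+:U_+]\geq[\phi M:M]$, whence $H_{top}(\phi,U)\geq\log[\phi M:M]$ and the supremum gives $h_{top}(\phi)\geq s$.

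The hard part will be this reverse inequality, namely producing from an abstract compact $M$ a compact open subgroup $U$ whose associated $U_+$ realizes at least the index $[\phi M:M]$. This is the exact counterpart of the delicate passage in Proposition~\ref{LFcoralg} (finding $U\in\mathcal N(G)$ with $[U^-:\phi^{-1}U^-]\geq[A:\phi^{-1}A]$), but it is genuinely more subtle here: a general totally disconnected locally compact group need not possess any base of compact open \emph{normal} subgroups, so the convenient product and quotient manipulations that $\mathcal N(G)$ affords in Proposition~\ref{LFcoralg} must be replaced by arguments that use only (possibly non-normal) compact open subgroups together with the compactness of $M$. Controlling the behaviour of indices under $\phi$ in this normality-free setting, both in the limit-free identity of the second step and in this comparison, is where the real work lies, and is the reason the result is quoted from \cite{GBV} rather than reproved here.
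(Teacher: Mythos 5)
You should first note that the paper itself contains no proof of Fact~\ref{LFcortop}: it is quoted verbatim from \cite[Equation (3.10)]{GBV}, so the only meaningful comparison is with the cited source. Your reconstruction of the overall route is faithful to it: the reduction via van Dantzig to compact open subgroups, the cotrajectories $C_n(\phi,U)=\bigcap_{i=0}^{n-1}\phi^{-i}U$ with $C_{n+1}=U\cap\phi^{-1}C_n$, the weak decrease and stabilization of $\delta_n=[C_n:C_{n+1}]$, the dual construction $U_{[0]}=U$, $U_{[n+1]}=U\cap\phi(U_{[n]})$, $U_+=\bigcap_{n\in\N}U_{[n]}$ (one checks $U_{[n]}=\phi^n(C_{n+1})$, using $\phi(A\cap\phi^{-1}B)=\phi(A)\cap B$), the inclusion $U_+\subseteq\phi U_+$ via continuity on a decreasing chain of compacta, and the limit-free identity $H_{top}(\phi,U)=\log[\phi U_+:U_+]$ are all exactly the content of \cite[Section 3]{GBV}, and your verified steps there are sound, as is the easy inequality $h_{top}(\phi)\leq s$.

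However, there is a genuine gap precisely where you locate the difficulty, and your proposed mechanism for closing it would fail. For the inequality $h_{top}(\phi)\geq\log[\phi M:M]$ you suggest taking \emph{any} compact open $U\supseteq M$ and invoking the maximality property of $U_+$; but maximality works \emph{against} you here. Indeed, maximality only gives $M\subseteq U_+$, whence the containment-induced injection of coset spaces yields $[\phi U_+:U_+]\geq[\phi M:\phi M\cap U_+]$, and since $\phi M\cap U_+\supseteq M$ this is an inequality in the wrong direction; worse, if $U$ happens to contain $\phi M$, then $\phi M$ is itself a compact subgroup of $U$ with $\phi M\subseteq\phi(\phi M)$, so maximality forces $\phi M\subseteq U_+$ and all $[\phi M:M]$ cosets are swallowed by $U_+$, leaving no lower bound at all. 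The missing idea is that $U$ must be chosen \emph{small}, not merely containing $M$: since $M$ is closed of finite index in the compact group $\phi M$, it is open in $\phi M$, so total disconnectedness allows one to pick a compact open subgroup $U$ with $M\subseteq U$ and $U\cap\phi M=M$ (for instance $U=MW$ for a sufficiently small compact open $W$ normalized by $M$). Then $M\subseteq U_+$ while $\phi M\cap U_+\subseteq\phi M\cap U=M$, so the $[\phi M:M]$ cosets of $M$ in $\phi M$ remain distinct modulo $U_+$, giving $[\phi U_+:U_+]\geq[\phi M:M]$ and hence $H_{top}(\phi,U)\geq\log[\phi M:M]$. Note also that the absence of compact open \emph{normal} subgroups, which you single out as the main obstruction, is a red herring: the coset-counting above never uses normality; the real crux is the separating choice of $U$. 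This distinction matters even in principle, because for a non-injective $\phi$ the indices $[\phi^{n+1}M:\phi^nM]$ can strictly decrease, so an unluckily large $U$ genuinely loses the quantity $[\phi M:M]$ that the supremum must see.
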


Using Proposition~\ref{LFcoralg} and Fact~\ref{LFcortop}, which are respectively consequences of the Limit-free Formulas for the algebraic and for the topological entropy, we give in Theorem \ref{BT} a new and very short proof for the Bridge Theorem from \cite{DGB}.
 
\begin{thm}\label{BT}
Let $G$ be a compactly covered locally compact abelian group and $\phi\in\End(G)$. Then $h_{alg}(\phi)=h_{top}(\widehat\phi)$.
\end{thm}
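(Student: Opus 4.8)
The plan is to prove $h_{alg}(\phi) = h_{top}(\widehat\phi)$ by showing that the two suprema appearing in Proposition~\ref{LFcoralg} and Fact~\ref{LFcortop} are termwise equal under Pontryagin duality. First I would observe that since $G$ is a compactly covered locally compact abelian group, by Remark~\ref{cc-td} its dual $\widehat G$ is totally disconnected, so that Fact~\ref{LFcortop} applies to $\widehat\phi \in \End(\widehat G)$ and gives
\[
h_{top}(\widehat\phi) = \sup\{\log[\widehat\phi M : M] : M \leq \widehat G,\ M \text{ compact},\ M \subseteq \widehat\phi M,\ [\widehat\phi M : M] < \infty\}.
\]
Meanwhile Proposition~\ref{LFcoralg} expresses $h_{alg}(\phi)$ as a supremum over open normal (here just open, since $G$ is abelian) subgroups $A \leq G$ with $\phi^{-1}A \leq A$ and $[A : \phi^{-1}A] < \infty$. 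The strategy is to set up a correspondence $A \leftrightarrow M = A^\perp$ between these two index sets and to check that the corresponding indices coincide.

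Next I would verify that annihilation is the right bijection. By the annihilator dictionary recalled before the theorem, $A \leq G$ is open precisely when $A^\perp \leq \widehat G$ is compact, and $(A^\perp)^\top = A$ gives a bijection between open subgroups of $G$ and compact subgroups of $\widehat G$. I would then translate the dynamical condition: by Lemma~\ref{annihilator:of:preimage}, $(\phi^{-1}A)^\perp = \widehat\phi(A^\perp)$, so writing $M = A^\perp$ we have $(\phi^{-1}A)^\perp = \widehat\phi M$. Since annihilation reverses inclusions among closed subgroups, the hypothesis $\phi^{-1}A \leq A$ becomes $A^\perp \leq (\phi^{-1}A)^\perp$, i.e. $M \subseteq \widehat\phi M$. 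Thus the constraint set in Proposition~\ref{LFcoralg} maps exactly onto the constraint set in Fact~\ref{LFcortop}.

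Finally I would match the indices. Applying Lemma~\ref{dual:of:quotient} to the pair $\phi^{-1}A \leq A$ of closed subgroups of $G$ yields a topological isomorphism $\widehat{A/\phi^{-1}A} \cong (\phi^{-1}A)^\perp / A^\perp = \widehat\phi M / M$. Since $A/\phi^{-1}A$ is finite (by the hypothesis $[A:\phi^{-1}A] < \infty$), it is self-dual in cardinality, so
\[
[A : \phi^{-1}A] = |A/\phi^{-1}A| = |\widehat{A/\phi^{-1}A}| = [\widehat\phi M : M].
\]
Hence each term $\log[A : \phi^{-1}A]$ equals the corresponding term $\log[\widehat\phi M : M]$, and since the correspondence $A \mapsto A^\perp$ is a bijection between the two admissible families, the two suprema are equal, giving $h_{alg}(\phi) = h_{top}(\widehat\phi)$.

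The main obstacle I anticipate is purely bookkeeping rather than conceptual: one must confirm that every finiteness and openness hypothesis transfers cleanly across duality in both directions (so that the correspondence is genuinely a bijection of the two index sets, not merely an injection one way), and in particular that the finite-index condition $[A:\phi^{-1}A]<\infty$ is preserved — this is where Lemma~\ref{dual:of:quotient} and the finiteness of the quotient do the essential work, since for a finite abelian group the duality gives an equality of orders and hence of indices.
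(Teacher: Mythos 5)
Your proposal is correct and follows essentially the same route as the paper: the paper's proof likewise establishes the annihilator correspondence $A \leftrightarrow M = A^\perp$ between the two index sets, uses Lemma~\ref{annihilator:of:preimage} to identify $(\phi^{-1}A)^\perp$ with $\widehat\phi M$ and Lemma~\ref{dual:of:quotient} to identify $\widehat\phi M/M$ with $\widehat{A/\phi^{-1}A}$, and concludes by equating the two suprema from Proposition~\ref{LFcoralg} and Fact~\ref{LFcortop}.
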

\begin{proof}
Observe that $\widehat G$ is totally disconnected by Remark \ref{cc-td}.

Let $A$ be a subgroup of $G$. Then the following conditions are equivalent:
\begin{enumerate}[(i)]
\item $A$ is open, $\phi^{-1}A\subseteq A$ and $A/\phi^{-1}A$ is finite;
\item $M=A^\perp$ is compact, $M\subseteq\widehat\phi M$ and $\widehat\phi M/M$ is finite.
\end{enumerate}
Under these conditions, by Lemma \ref{annihilator:of:preimage} and Lemma \ref{dual:of:quotient} respectively, 
\[\widehat \phi M/M =  ( \phi^{-1} A )^\perp / A ^\perp \cong \widehat{ A/\phi^{-1}A },\] 
which is isomorphic to $A/\phi^{-1}A$, being finite.
Now the thesis follows from Fact~\ref{LFcortop} and Proposition~\ref{LFcoralg}.
\end{proof}

\section{Addition Theorem}\label{sec:AT}

This section is dedicated to the following Addition Theorem.

\begin{thm}\label{addthm}
Let $G$ be a strongly compactly covered group, $\phi\in \End(G)$, $H$ a closed normal $\phi$-stable subgroup of $G$ with $\ker\phi\leq H$, and $\bar \phi : G/H \to G/H$ the induced map. Then 
\begin{equation*}
h_{alg} (\phi) =h_{alg} (\bar \phi) + h_{alg} (\phi \restriction_H).
\end{equation*}
\end{thm}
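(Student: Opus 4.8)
The plan is to prove the two inequalities $h_{alg}(\phi)\ge h_{alg}(\bar\phi)+h_{alg}(\phi\restriction_H)$ and $h_{alg}(\phi)\le h_{alg}(\bar\phi)+h_{alg}(\phi\restriction_H)$ separately. The first (``superadditivity'') holds already under the weaker assumption that $H$ is merely a closed normal $\phi$-invariant subgroup: it is exactly the content of Proposition \ref{first:half:add:thm} (the strengthening of Monotonicity \ref{monotonicity} announced there), so I would simply invoke it. One sees this half quickly from the splitting $[T_n(\phi,U):U]=[T_n(\bar\phi,\pi U):\pi U]\cdot[T_n(\phi,U)\cap H:U\cap H]$ together with the inclusion $T_n(\phi,U)\cap H\supseteq T_n(\phi\restriction_H,U\cap H)$, followed by a supremum over a cofinal family. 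Thus the real work is the reverse inequality, and this is where the stronger hypotheses $\phi(H)=H$ and $\ker\phi\le H$ enter.

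For the reverse inequality I would argue through the Limit-free reformulation in Proposition \ref{LFcoralg}. Since $H$ and $G/H$ are again strongly compactly covered by Lemma \ref{cor:cclcstable}, and $\phi\restriction_H\in\End(H)$, $\bar\phi\in\End(G/H)$, Proposition \ref{LFcoralg} applies to all three maps and rewrites each entropy as a supremum of the indices $\log[A:\phi^{-1}A]$ over open normal subgroups $A$ that are inversely invariant (i.e. $\phi^{-1}A\le A$) and of finite index. The first step is to record the consequence $\phi^{-1}(H)=H$ of the hypotheses: the inclusion $H\subseteq\phi^{-1}(H)$ is $\phi$-invariance, while $\phi^{-1}(H)\subseteq H$ follows since $\phi(g)\in H=\phi(H)$ forces $g\in(\ker\phi)H=H$. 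From $\phi(H)=H$ one then gets $\phi^{-1}(AH)=\phi^{-1}(A)H$ for every $A\le G$, and from $\phi^{-1}(H)=H$ one gets $(\phi\restriction_H)^{-1}(X)=\phi^{-1}(X)$ for every subgroup $X\le H$.

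Now fix an open normal subgroup $A$ of $G$ with $\phi^{-1}A\le A$ and $[A:\phi^{-1}A]<\infty$. I would check that $\pi A=AH/H$ is an admissible competitor for $\bar\phi$ and that $C=A\cap H$ is one for $\phi\restriction_H$; using $\bar\phi\,\pi=\pi\,\phi$ together with the preimage identities above gives $\bar\phi^{-1}(\pi A)=\pi(\phi^{-1}A)$ and $(\phi\restriction_H)^{-1}(C)=\phi^{-1}(A)\cap H$, both of finite index and inversely invariant. The crux is then the purely group-theoretic splitting of the index along the normal subgroup $H$: for $\phi^{-1}A\le A$ one has
\[
[A:\phi^{-1}A]=[\pi A:\bar\phi^{-1}(\pi A)]\cdot[C:(\phi\restriction_H)^{-1}(C)],
\]
which I would derive from the standard identity $[T:U]=[\pi T:\pi U]\cdot[T\cap H:U\cap H]$ (valid for any $U\le T$ and $H$ normal) applied to $T=A$, $U=\phi^{-1}A$, and the preimage identities. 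Taking logarithms, bounding the two factors on the right by $h_{alg}(\bar\phi)$ and $h_{alg}(\phi\restriction_H)$ respectively via Proposition \ref{LFcoralg}, and finally taking the supremum over all such $A$ yields $h_{alg}(\phi)\le h_{alg}(\bar\phi)+h_{alg}(\phi\restriction_H)$.

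The main obstacle is the displayed multiplicative index relation, and specifically the identity $\bar\phi^{-1}(\pi A)=\pi(\phi^{-1}A)$ on which it rests: this is precisely where $\phi$-stability is used (to compute $\phi^{-1}(AH)=\phi^{-1}(A)H$), while the condition $\ker\phi\le H$ enters through $\phi^{-1}(H)=H$, which is what lets the restricted preimage $(\phi\restriction_H)^{-1}$ be computed as the ambient $\phi^{-1}$ and thereby identifies the second factor with a genuine competitor for $\phi\restriction_H$. Without these hypotheses the two ``halves'' of the index need not recombine multiplicatively, and only the inequality of Proposition \ref{first:half:add:thm} survives.
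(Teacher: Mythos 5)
Your proposal is correct and follows essentially the same route as the paper: the first half is exactly Proposition \ref{first:half:add:thm}, and the second half proceeds, as in the paper, by fixing an open normal inversely invariant $A$ of finite index, splitting $[A:\phi^{-1}A]$ along $H$ into $[\pi A:\bar\phi^{-1}(\pi A)]\cdot[A\cap H:(\phi\restriction_H)^{-1}(A\cap H)]$ (the paper derives this same factorization via the intermediate subgroup $(\phi^{-1}A)(A\cap H)$ and the modular law), and bounding each factor by Proposition \ref{LFcoralg}. Your identification of where $\phi(H)=H$ and $\ker\phi\le H$ enter (namely $\phi^{-1}(AH)=\phi^{-1}(A)H$ and $\phi^{-1}(H)=H$) matches Lemma \ref{lem:stable} and the corresponding steps in the paper's argument.
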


As an immediate corollary we obtain the following result for a topological automorphism $\phi\in \Aut(G)$ of $G$.

\begin{corol}\label{ATaut}
If $G$ is a strongly compactly covered group, $\phi\in \Aut(G)$ and $H$ is a normal $\phi$-stable subgroup of $G$, then  \begin{equation*}
h_{alg} (\phi) =h_{alg} (\bar \phi) + h_{alg} (\phi \restriction_H).
\end{equation*}
\end{corol}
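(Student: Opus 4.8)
The plan is to derive Corollary \ref{ATaut} directly from Theorem \ref{addthm}, since the corollary is simply the special case where $\phi$ is a topological automorphism. The only gap to bridge is that Theorem \ref{addthm} has two hypotheses on $H$ beyond being a closed normal subgroup --- namely that $H$ is $\phi$-\emph{stable} (i.e. $\phi(H) = H$) and that $\ker\phi \leq H$ --- whereas the corollary assumes only that $H$ is normal and $\phi$-stable. So first I would observe that when $\phi \in \Aut(G)$, the condition $\ker\phi \leq H$ is automatically satisfied, because $\phi$ is injective and hence $\ker\phi = \{e_G\}$, which is contained in every subgroup of $G$.

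Next I would verify that the remaining hypotheses of Theorem \ref{addthm} are met. The group $G$ is strongly compactly covered by assumption, and $H$ is a closed normal $\phi$-stable subgroup by assumption; the only point needing a remark is that in the corollary $H$ is called merely ``normal,'' so I would note that since $\phi \in \Aut(G)$ is in particular a continuous endomorphism of $G$, the statement fits the framework of Theorem \ref{addthm} with $\phi \in \End(G)$. With $\ker\phi = \{e_G\} \leq H$ established, all the hypotheses of Theorem \ref{addthm} hold verbatim. Applying the theorem then yields
\begin{equation*}
h_{alg}(\phi) = h_{alg}(\bar\phi) + h_{alg}(\phi\restriction_H),
\end{equation*}
which is exactly the assertion of the corollary.

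I do not expect any genuine obstacle here, as this is a routine specialization argument rather than a substantive proof. The only subtlety worth making explicit is the injectivity of $\phi$: a topological automorphism is a bijection, so its kernel is trivial, which is what makes the hypothesis $\ker\phi \leq H$ vacuous in this setting. (One could also remark, though it is not needed for the argument, that $\bar\phi$ and $\phi\restriction_H$ are themselves topological automorphisms of $G/H$ and $H$ respectively, since $\phi$ restricts to an automorphism of the $\phi$-stable subgroup $H$ and induces an automorphism on the quotient.) The proof is therefore a single invocation of Theorem \ref{addthm} after checking that $\ker\phi = \{e_G\}$ is contained in $H$.
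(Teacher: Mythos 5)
Your proposal is correct and matches the paper's (implicit) argument: the corollary is obtained by specializing Theorem \ref{addthm}, noting that for $\phi \in \Aut(G)$ the kernel is trivial, so the hypothesis $\ker\phi \leq H$ holds automatically. The paper treats this as immediate and offers no further detail, so your routine verification is exactly what is intended.
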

In particular, the Addition Theorem holds for automorphisms of discrete torsion FC-groups. These groups are locally finite, and we conjecture the Addition Theorem holds also in the bigger class of discrete locally finite groups.

\begin{conj}\label{conj}
	If $G$ is a discrete locally finite group, $\phi\in \Aut(G)$ and $H$ is a normal $\phi$-stable subgroup of $G$, then  \begin{equation*}
	h_{alg} (\phi) =h_{alg} (\bar \phi) + h_{alg} (\phi \restriction_H).
	\end{equation*}
\end{conj}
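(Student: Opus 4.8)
The plan is to establish the two inequalities separately and combine them. First I record the consequences of the hypotheses: since $\phi(H)=H$ the restriction $\phi\restriction_H$ is a surjective endomorphism of $H$, and since in addition $\ker\phi\leq H$ one gets $\phi^{-1}(H)=H$, so the induced map $\bar\phi$ on $G/H$ is \emph{injective}. Both $H$ and $G/H$ are again strongly compactly covered by Lemma \ref{cor:cclcstable}, so $h_{alg}(\phi\restriction_H)$ and $h_{alg}(\bar\phi)$ are computed via Corollary \ref{cor:halg:induced:maps} over the cofinal families $\mathcal N_G(H)$ and $\mathcal N_G(G/H)$, and all three entropies admit the Limit-free description of Proposition \ref{LFcoralg}. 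The reduction step common to both inequalities is the index factorization: for $U\in\mathcal N(G)$ and $n\geq 1$, writing $T_n=T_n(\phi,U)$ and using that $T_n$ is a subgroup (Lemma \ref{anna:lemma}) with $\pi T_n=T_n(\bar\phi,\pi U)$ (Lemma \ref{trajectories:quotient}), the second isomorphism theorem gives
\[
[T_n:U]=[\pi T_n:\pi U]\cdot[T_n\cap H:U\cap H].
\]
Passing to $\tfrac1n\log(-)$ and letting $n\to\infty$ (limits exist by Lemma \ref{lem:conv}) yields $H_{alg}(\phi,U)=H_{alg}(\bar\phi,\pi U)+L(U)$, where $L(U)$ is the exponential growth rate of $T_n\cap H$.

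For the inequality $h_{alg}(\phi)\geq h_{alg}(\bar\phi)+h_{alg}(\phi\restriction_H)$ (this is essentially Proposition \ref{first:half:add:thm}, which already yields Monotonicity), I would note $T_n(\phi\restriction_H,U\cap H)\subseteq T_n\cap H$, whence $L(U)\geq H_{alg}(\phi\restriction_H,U\cap H)$ and therefore $H_{alg}(\phi,U)\geq H_{alg}(\bar\phi,\pi U)+H_{alg}(\phi\restriction_H,U\cap H)$. To separate the two suprema I would use that $\mathcal N(G)$ is directed (a product of compact open normal subgroups is again one): given $U_1,U_2\in\mathcal N(G)$, put $U=U_1U_2\in\mathcal N(G)$, so that $\pi U\supseteq\pi U_1$ and $U\cap H\supseteq U_2\cap H$; by monotonicity of $H_{alg}(\cdot,-)$ (Remark \ref{rem:cofin}) this gives $h_{alg}(\phi)\geq H_{alg}(\bar\phi,\pi U_1)+H_{alg}(\phi\restriction_H,U_2\cap H)$. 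Taking suprema independently over $U_1,U_2$ and invoking Corollary \ref{cor:halg:induced:maps} finishes this direction.

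The reverse inequality $h_{alg}(\phi)\leq h_{alg}(\bar\phi)+h_{alg}(\phi\restriction_H)$ is the main obstacle, and I would attack it through Proposition \ref{LFcoralg}. Fix an open normal $A\unlhd G$ with $\phi^{-1}A\leq A$ and $[A:\phi^{-1}A]<\infty$. Applying the same second-isomorphism factorization to the pair $\phi^{-1}A\leq A$ gives $[A:\phi^{-1}A]=[\pi A:\pi\phi^{-1}A]\cdot[A\cap H:\phi^{-1}A\cap H]$. The crux is to recognize each factor as a \emph{legal} Limit-free test index for $\bar\phi$ and for $\phi\restriction_H$ respectively; this is exactly where $\phi$-stability enters. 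Using $\phi(H)=H$ (so that $\phi(gH)=\phi(g)H$ for all $g$) I expect to prove the identity $\pi\phi^{-1}A=\bar\phi^{-1}(\pi A)$, while $H$ being $\phi$-invariant gives $\phi^{-1}A\cap H=(\phi\restriction_H)^{-1}(A\cap H)$. Then $\pi A$ is open normal in $G/H$ with $\bar\phi^{-1}(\pi A)\leq\pi A$ of finite index, and $A\cap H$ is open normal in $H$ with $(\phi\restriction_H)^{-1}(A\cap H)\leq A\cap H$ of finite index, so $\log[\pi A:\bar\phi^{-1}\pi A]\leq h_{alg}(\bar\phi)$ and $\log[A\cap H:(\phi\restriction_H)^{-1}(A\cap H)]\leq h_{alg}(\phi\restriction_H)$ by Proposition \ref{LFcoralg}. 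Summing and taking the supremum over all such $A$ yields the bound. The delicate point — and the step I expect to require the most care — is verifying the identity $\pi\phi^{-1}A=\bar\phi^{-1}(\pi A)$ and that the factorization lands \emph{exactly} on admissible test subgroups; the surjectivity of $\phi\restriction_H$ and the injectivity of $\bar\phi$ coming from $\ker\phi\leq H$ are what guarantee that no index is lost or overcounted when splitting $[A:\phi^{-1}A]$ between the quotient and the subgroup.
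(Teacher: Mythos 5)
There is a genuine gap, and it is not in any single step of your argument but in its scope: what you have sketched is (essentially verbatim) the paper's proof of Theorem \ref{addthm}, whereas the statement you were asked to prove is Conjecture \ref{conj}, which the paper deliberately leaves \emph{open}. Every tool you invoke --- the cofinality of $\mathcal N(G)$ in $\mathcal C(G)$, Lemma \ref{cor:cclcstable}, Corollary \ref{cor:halg:induced:maps}, the Limit-free Formula of Proposition \ref{LFcoralg}, and even the fact that $T_n(\phi,U)$ is a subgroup so that the second isomorphism theorem applies to $[T_n:U]$ --- is established only for strongly compactly covered groups, i.e.\ (in the discrete case, by Lemma \ref{cofinality:discrete}(3)) for torsion FC-groups. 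The conjecture concerns the strictly larger class of discrete locally finite groups. For a locally finite group that is not an FC-group, $\mathcal N(G)$ is \emph{not} cofinal in $\mathcal C(G)$; only $\mathcal B(G)$, the family of finite subgroups, is (Lemma \ref{cofinality:discrete}(2)). In an extreme case such as an infinite simple locally finite group, $\mathcal N(G)$ consists of the trivial subgroup alone, so the supremum of $H_{alg}(\phi,U)$ over $U\in\mathcal N(G)$ carries no information about $h_{alg}(\phi)$, and Proposition \ref{LFcoralg} is simply not available.

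Concretely, the argument breaks at the very first reduction: you compute $h_{alg}(\phi)$ as a supremum over $U\in\mathcal N(G)$ and factor $[T_n:U]=[\pi T_n:\pi U]\cdot[T_n\cap H:U\cap H]$, but for a general locally finite $G$ the supremum must range over finite subgroups $U$ that need not be normal, $T_n(\phi,U)$ need not be a subgroup (Lemma \ref{anna:lemma} requires the commuting hypothesis), and the index factorization has no meaning. The same obstruction kills the reverse inequality, since the Limit-free test over open normal $A$ with $\phi^{-1}A\leq A$ of finite index is only proved to compute $h_{alg}$ when $\mathcal N(G)$ is cofinal. So your proposal correctly re-derives Corollary \ref{ATaut} in the discrete torsion FC case, but contributes nothing toward the new content of the conjecture; to attack it one would need either a new computation of $h_{alg}$ over non-normal finite subgroups or a reduction of an arbitrary locally finite group to its FC-pieces, neither of which appears in your sketch.
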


In the sequel, $G$ is a strongly compactly covered group, $\phi \in \End(G)$, and $H$ is a closed normal $\phi$-invariant subgroup of $G$. We denote by $\pi: G \to G/H$ the canonical projection, and by $\bar \phi : G/H \to G/H$ the map induced by $\phi$ on the quotient group.
Obviously, $\bar \phi \pi = \pi \phi$.

First we prove the following easy charaterization of the subgroups $H$ of $G$  that we consider.
\begin{lemma}\label{lem:stable}
Let $G$ be a group, $\phi\in \End(G)$ and $H$ a closed  subgroup of $G$. The following assertions are equivalent:
\ben 
\item  $H$ is   $\phi$-stable and contains $\ker(\phi)$;
\item  $\phi^{-1}(H)=H$ and $H\leq \mathrm{Im}(\phi)$.
\een
\end{lemma}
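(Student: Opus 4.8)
The plan is to treat this as a purely algebraic equivalence: the hypothesis that $H$ is closed plays no role here, so I would verify each implication directly from the set-theoretic relations among $\ker\phi$, $\mathrm{Im}(\phi)$, and the preimage $\phi^{-1}(H)$, never invoking the topology. Both directions split into two inclusions, and in each direction exactly one of the two inclusions is the substantive one; the other three are immediate.

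For the implication (1)$\Rightarrow$(2), I would assume $\phi(H)=H$ and $\ker\phi\leq H$. The inclusion $H\leq\mathrm{Im}(\phi)$ is immediate from $H=\phi(H)\subseteq\phi(G)$. For the equality $\phi^{-1}(H)=H$, the inclusion $H\subseteq\phi^{-1}(H)$ follows at once because $\phi(H)=H$ means $\phi(h)\in H$ for each $h\in H$. The one step requiring the kernel hypothesis is the reverse inclusion $\phi^{-1}(H)\subseteq H$: given $g$ with $\phi(g)\in H=\phi(H)$, I would pick $h\in H$ with $\phi(g)=\phi(h)$, conclude $\phi(gh^{-1})=e_G$ so that $gh^{-1}\in\ker\phi\leq H$, and hence $g=(gh^{-1})h\in H$.

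For the implication (2)$\Rightarrow$(1), I would assume $\phi^{-1}(H)=H$ and $H\leq\mathrm{Im}(\phi)$. The inclusion $\ker\phi\leq H$ is immediate, since $e_G\in H$ forces $\ker\phi\subseteq\phi^{-1}(H)=H$. For $\phi(H)=H$, the inclusion $\phi(H)\subseteq H$ is again immediate from $H=\phi^{-1}(H)$ (each $h\in H=\phi^{-1}(H)$ satisfies $\phi(h)\in H$). The substantive step, now using $H\leq\mathrm{Im}(\phi)$, is the reverse inclusion $H\subseteq\phi(H)$: given $h\in H\leq\mathrm{Im}(\phi)$, I would write $h=\phi(g)$ for some $g\in G$, observe $g\in\phi^{-1}(H)=H$, and deduce $h\in\phi(H)$.

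I do not expect a genuine obstacle here, as every step is elementary. The only points that deserve care are the two ``surjectivity-type'' arguments, which are dual to each other: using $\ker\phi\leq H$ to pass from $\phi(g)\in\phi(H)$ back to $g\in H$, and using $H\leq\mathrm{Im}(\phi)$ to realize each $h\in H$ as $\phi(g)$ with $g\in\phi^{-1}(H)=H$. These are precisely the places where the two hypotheses are actually consumed, so I would make sure each is invoked explicitly rather than conflating $\phi(H)=H$ with $\phi^{-1}(H)=H$, which are equivalent only under the stated side conditions.
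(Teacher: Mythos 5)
Your proof is correct and follows essentially the same route as the paper's: both directions hinge on the identities $\phi^{-1}(\phi(H))=H$ under $\ker\phi\leq H$ and $\phi(\phi^{-1}(H))=H$ under $H\leq\mathrm{Im}(\phi)$, which you simply verify elementwise where the paper states them as set identities. No gap; nothing further to add.
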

\begin{proof}
(1)$\Rightarrow$(2) Since $H$ is   $\phi$-stable,  we have $H=\phi(H)\leq \phi(G)=\mathrm{Im}(\phi)$ and  $\phi^{-1}(H)=\phi^{-1}(\phi(H))$. As $\ker(\phi)\leq H$,  $\phi^{-1}(H)=\phi^{-1}(\phi(H))=H$.

(2)$\Rightarrow$(1) Since $\ker(\phi)\leq \phi^{-1}(H)$, the condition $\phi^{-1}(H)=H$ implies that $\ker(\phi)\leq H$ and also that $\phi(\phi^{-1}(H))=\phi(H)$. As $H\leq \mathrm{Im}(\phi)$,  $\phi(\phi^{-1}(H))$ coincides also with $H$. It follows that $H=\phi(H)$.
\end{proof}

Now we prove the first half of the Addition Theorem. Note that it holds in a more general setting than Theorem \ref{addthm}.

\begin{prop}\label{first:half:add:thm}
Let $G$ be a strongly compactly covered group, $\phi \in \End(G)$, $H$ a closed normal $\phi$-invariant subgroup of $G$, and $\bar \phi : G/H \to G/H$ the induced map. Then 
\begin{equation*}
h_{alg} (\phi) \geq h_{alg} (\bar \phi) + h_{alg} (\phi \restriction_H).
\end{equation*}
\end{prop}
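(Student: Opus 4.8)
The plan is to reduce everything to the normalized-trajectory index formula of Lemma~\ref{lem:conv} together with the two descriptions of $h_{alg}(\bar\phi)$ and $h_{alg}(\phi\restriction_H)$ furnished by Corollary~\ref{cor:halg:induced:maps}, and then, for a \emph{single} $U\in\mathcal N(G)$, to compare the index $[T_n(\phi,U):U]$ with the corresponding indices for the quotient and the restriction. First I would fix $U\in\mathcal N(G)$ and write $T_n=T_n(\phi,U)$, which is a compact open subgroup of $G$ containing $U$ by Lemma~\ref{anna:lemma}. The key algebraic step is an exact factorization of the index through the chain $U\leq U(T_n\cap H)\leq T_n$: using that $T_n\cap H$ is the kernel of $\pi\restriction_{T_n}$ and the second isomorphism theorem, I would establish
\[[T_n:U]=[\pi(T_n):\pi U]\cdot[T_n\cap H:U\cap H].\]

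By Lemma~\ref{trajectories:quotient}, applied with $\alpha=\pi$ and $\psi=\bar\phi$ (so that $\bar\phi\pi=\pi\phi$), the first factor equals exactly $[T_n(\bar\phi,\pi U):\pi U]$. For the second factor, the containment $T_n(\phi\restriction_H,U\cap H)\subseteq T_n\cap H$ holds because each factor $\phi^i(U\cap H)$ lies both in $\phi^i(U)$ and, by $\phi$-invariance of $H$, in $H$; hence $[T_n\cap H:U\cap H]\geq[T_n(\phi\restriction_H,U\cap H):U\cap H]$. Taking $\log$, dividing by $n$, and letting $n\to\infty$ — invoking Lemma~\ref{lem:conv} so that each of the three relevant quantities has a genuine limit, after noting that $\pi U\in\mathcal N(G/H)$ and $U\cap H\in\mathcal N(H)$ since $\pi$ is an open surjection and $H$ is closed normal — yields the pointwise inequality
\[H_{alg}(\phi,U)\geq H_{alg}(\bar\phi,\pi U)+H_{alg}(\phi\restriction_H,U\cap H).\]

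To finish I would pass to suprema, and here lies the only real subtlety: the supremum of a sum is not the sum of the suprema, and the two terms on the right are optimized by different choices of $U$. I would resolve this using the directedness of $\mathcal N(G)$ together with the monotonicity of $H_{alg}(\psi,-)$ from Remark~\ref{rem:cofin}. Given $U_1,U_2\in\mathcal N(G)$, the product $U=U_1U_2$ again lies in $\mathcal N(G)$ and satisfies $\pi U\supseteq\pi U_1$ and $U\cap H\supseteq U_2\cap H$, so monotonicity gives
\[h_{alg}(\phi)\geq H_{alg}(\phi,U)\geq H_{alg}(\bar\phi,\pi U_1)+H_{alg}(\phi\restriction_H,U_2\cap H).\]
Since the left-hand side is independent of $U_1$ and $U_2$, I can take the supremum over $U_1$ and over $U_2$ \emph{separately}; by the two formulas of Corollary~\ref{cor:halg:induced:maps} these give $h_{alg}(\bar\phi)$ and $h_{alg}(\phi\restriction_H)$ respectively, and the desired inequality follows, with the usual conventions in $[0,+\infty]$ should some entropy be infinite.

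I expect the main obstacle to be precisely this separation-of-suprema step, since it is exactly the point where the naive estimate runs the wrong way; it is forced to work by the lattice structure of $\mathcal N(G)$ and monotonicity. By contrast, the index factorization is a routine application of the isomorphism theorems, and the passage to the limit is guaranteed by Lemma~\ref{lem:conv}; a minor but necessary bookkeeping point is the verification that the induced sets $\pi U$ and $U\cap H$ belong to the respective families $\mathcal N(G/H)$ and $\mathcal N(H)$ so that all cited lemmas genuinely apply.
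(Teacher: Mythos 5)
Your proposal is correct and follows essentially the same route as the paper: the index factorization through $U\leq U(T_n\cap H)\leq T_n$ is the paper's chain $U\leq T_n\cap(UH)\leq T_n$ in disguise (the two middle terms coincide by the modular law), the estimate of the two factors via Lemma~\ref{trajectories:quotient} and the containment $T_n(\phi\restriction_H,U\cap H)\subseteq T_n\cap H$ is identical, and the crucial separation-of-suprema step via $U=U_1U_2$, monotonicity, and Corollary~\ref{cor:halg:induced:maps} is exactly the paper's argument. The only cosmetic difference is that you identify the first factor directly as $[\pi(T_n):\pi U]$, whereas the paper passes through $T_nH=T_n(\phi,UH)$ before projecting; both are valid.
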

\begin{proof}
For $U\in \mathcal N(G)$, and $n \in \N$, let $T_n = T_n(\phi,U)$. Then $U \leq T_n \cap (UH) \leq T_n$, so that 
\begin{equation}\label{two:indexes}
[T_n : U] = [T_n: T_n \cap (UH)] [T_n \cap (UH):U],
\end{equation} 
and we study separately the two indices in the right hand side of the above equation. 

Let $a = [T_n: T_n \cap (UH)] $ and consider the following Hasse diagram in the lattices of subgroups of $G$ and of $G/H$.
\begin{center}
\begin{tikzpicture}
\node[above] at (4,0) {$\pi$};
\node(G) at (0,0) {$G$};											\node(G/H) at (8,0) {$G/H$};
\node (TnH) at (0,-2) {$T_n  H = T_n(\phi, UH)$};			\node(piTn) at (8,-2) {$T_n(\bar \phi, \pi U)$};
\node[below left] at (1,-3) {$a$};								\node [right] at(8,-3) {$a$};
\node(UH)      
at (2,-4) {$UH$};										\node(piU) at (8,-4) {$\pi U$};
\node(Tn)      
at (-2,-4) {$T_n$};
\node[below left] at (-1,-5) {$a$};
\node(inters)     
at (0,-6)  {$T_n \cap (UH)$};
\node(U)     
at (-2,-8)     {$U$};
\node(H)    
at (4,-6)  {$H$};										\node(o) at (8,-6) {$\{0\}$};
\draw(G) -- (TnH);
\draw(TnH)       -- (UH);
\draw(TnH)       -- (Tn);
\draw(Tn)       -- (inters);
\draw(UH)       -- (inters);
\draw(inters)       -- (U);
\draw(UH)       -- (H);
\draw(G/H) -- (piTn) -- (piU) -- (o);
\draw[-latex](1,0)--(7,0);
\draw[-latex](2.5,-2)--(7,-2);
\draw[-latex](3,-4)--(7,-4);
\draw[-latex](5,-6)--(7,-6);
\end{tikzpicture}
\end{center}

Note that $T_n H = T_n(\phi, UH)$, since $\phi H\leq H$ and both $U$ and $H$ are normal in $G$, so
\[a= [T_n  H : UH] = [ T_n(\phi, UH) : UH].\]
Moreover, both $T_n(\phi, UH)$ and $UH$ contain $H = \ker \pi$, so considering their images in $G/H$ we have
$[ T_n(\phi, UH) : UH]= [\pi ( T_n(\phi, UH) ) : \pi(UH)]$, and the latter coincides with $[T_n(\bar \phi, \pi U ):  \pi U]$ by Lemma \ref{trajectories:quotient}.

To study the second index in the right hand side of Equation \eqref{two:indexes}, let $b = [T_n \cap (UH):U]$. As $T_n \cap (UH) = (T_n\cap H) U$ by the modular law, we have 
\[b = [(T_n\cap H) U : U] = [T_n \cap H:U\cap H] \geq [T_n(\phi, U\cap H):U\cap H],\]
where the last inequality follows from the inclusions $T_n \cap H \geq T_n(\phi, U\cap H) \geq U \cap H$.
\begin{center}
\begin{tikzpicture}
\node(TncapH U) at(0,0)     
{$
T_n \cap (UH) = (T_n\cap H)U$};		\node[above left] at (-1,-1) {$b$};
\node(U)     
at (-2,-2)     {$U$};
\node(Tn intH)     
at (2,-2)     {$T_n \cap H$};
\node(H)    
at (4,0)    {$H$};
\node (Tnint) at (5,-3) {$T_n(\phi, U\cap H)$};
\node(UintH)    
at (0,-4)    {$U\cap H$};				\node[above left] at (1,-3) {$b$};
\draw(TncapH U)       -- (U);
\draw(TncapH U)       -- (Tn intH);
\draw(H)       -- (Tn intH);
\draw(U)       -- (UintH);
\draw(Tn intH)       -- (UintH);
\draw(Tn intH)       --(Tnint)--(UintH);
\end{tikzpicture}
\end{center}
Finally, from Equation \eqref{two:indexes} and the above discussion it follows that
\begin{equation*}
[T_n : U] \geq [T_n( \bar \phi, \pi U):  \pi U] [T_n(\phi, U\cap H):U\cap H].
\end{equation*} 
Applying $\log$, dividing by $n$, and taking the limit for $n \rightarrow \infty$ we conclude that, for every $U\in \mathcal N(G)$,
\begin{equation} \label{first:half:add:thm:eq1}
H_{alg} (\phi, U) \geq H_{alg} (\bar \phi, \pi U) + H_{alg} (\phi \restriction_H, U \cap H).
\end{equation}

Let $U_1, U_2 \in \mathcal N(G)$, and $U = U_1 U_2 \in \mathcal N(G)$. Then $\pi U \geq \pi U_1$ are elements of $\mathcal N(G/H)$, and $U \cap H \geq U_2 \cap H$ are in $\mathcal N(H)$, so that
\begin{gather}	
H_{alg} (\bar \phi, \pi U) \geq H_{alg} (\bar \phi, \pi U_1), \label{first:half:add:thm:eq2}\\
H_{alg} (\phi \restriction_H, U \cap H) \geq H_{alg} (\phi \restriction_H, U_2 \cap H). \label{first:half:add:thm:eq3}
\end{gather}
From Equations \eqref{first:half:add:thm:eq1}, \eqref{first:half:add:thm:eq2} and \eqref{first:half:add:thm:eq3}, it follows that
\begin{equation*}
h_{alg} (\phi) \geq H_{alg} (\phi, U) \geq H_{alg} (\bar \phi, \pi U_1) + H_{alg} (\phi \restriction_H, U_2 \cap H).
\end{equation*}
Taking the suprema over $U_1, U_2 \in \mathcal N(G)$, we conclude by applying Corollary \ref{cor:halg:induced:maps}.
\end{proof}

We are now in position to prove the Addition Theorem.

\begin{proof}[\bf Proof of Theorem \ref{addthm}]
In view of Proposition \ref{first:half:add:thm}, it only remains to prove that $h_{alg} (\phi) \leq h_{alg} (\bar \phi) + h_{alg}(\phi\restriction_H~).$

Let $O$ be an arbitrary open normal subgroup of $G$ such that $\phi^{-1}O\leq O$ and $[O:\phi^{-1}O]< \infty$.
Then $\phi^{-1}O$ is normal in $G$ (so in particular in $O$ itself), and
\begin{equation}\label{twoind}
[O:\phi^{-1}O] = [O : (\phi^{-1}O)(O\cap H)] \cdot [(\phi^{-1}O)(O\cap H) : \phi^{-1}O].
\end{equation}
\begin{center}
\begin{tikzpicture}
\node(O) at(0,0)     {$O$};
\node(sum)    at (0,-2)     {$(\phi^{-1}O)(O\cap H)$};
\node(phimO)  at (-2,-4)     {$\phi^{-1}O$};
\node(OintH)    at (2,-4)    {$O \cap H$};
\node (int) at (0,-6) {$\phi^{-1}O \cap H$};
\node(H)   at (4,-2)    {$H$};	
\draw(O)  -- (sum) -- (phimO) -- (int) -- (OintH) -- (sum);
\draw(H)   -- (OintH);
\end{tikzpicture}
\end{center}

First observe that since $H$ is $\phi$-stable with  $\ker\phi\leq H$, then we also have $\phi^{-1}H=H$ by Lemma \ref{lem:stable}, so 
$$\phi^{-1}(O)\cap H = \phi^{-1}O\cap \phi^{-1}H = \phi^{-1}(O\cap H).$$
Then, computing the second index in the right hand side of Equation \eqref{twoind} we obtain 
\begin{equation*}
[(\phi^{-1}O)(O\cap H) : \phi^{-1}O] = [O\cap H:\phi^{-1}(O)\cap H] =[O\cap H:\phi^{-1}(O\cap H)].
\end{equation*} 
Note that $H$ is a strongly compactly covered group, and having an open normal subgroup $O\cap H$ such that $\phi^{-1}(O\cap H)\leq O\cap H$, and $[O\cap H:\phi^{-1}(O\cap H)]<\infty$ by  Equation \eqref{twoind}. By Proposition \ref{LFcoralg},
\begin{equation}\label{2:add:th:eq:1}
h_{alg} (\phi \restriction_H) \geq \log [O\cap H:\phi^{-1}(O\cap H)].
\end{equation}
To compute the first index in the right hand side of Equation \eqref{twoind}, first note that $$(\phi^{-1}O)(O\cap H) = (\phi^{-1}(O) H )\cap O$$ by the modular law. Then, chasing the diagram 
\begin{center}
\begin{tikzpicture}
\node(O) at(-2,0)     {$O$};		
\node(sum)    at (0,-2)  {$(\phi^{-1}O)(O\cap H) = (\phi^{-1}(O) H )\cap O$};
\node(phimO)  at (-2,-4)     {$\phi^{-1}O$};
\node(OintH)    at (2,-4)    {$O \cap H$};
\node(H)   at (4,-2)    {$H$};										\node(zero) at (8,-2) {$\pi H$};
\node(phimO+H) at (2,0) {$\phi^{-1}(O) H$};					\node(piphimO) at (8,0) {$\pi (\phi^{-1}O) = {\bar \phi}^{\ -1}(\pi O)$};
\node(O+H) at (0,2) {$OH$};											\node(piO+H) at (8,2) {$\pi O$};
\node(G) at (0,4) {$G$};												\node(G/H) at (8,4) {$G/H$};
\draw(O+H) -- (O)  -- (sum) -- (OintH) --(H) -- (phimO+H) -- (sum) -- (phimO);
\draw(phimO+H) --  (O+H) -- (G);
\draw(G/H) -- (piO+H) -- (piphimO) -- (zero);
				\node[above] at (4,4) {$\pi$};
\draw[-latex](1,4)--(7,4);
\draw[-latex](1.5,2)--(7,2);
\draw[-latex](3.5,0)--(6,0);
\draw[-latex](5,-2)--(7,-2);
\end{tikzpicture}
\end{center}
of the subgroups of $G$ and $G/H$, one can easily verify that
\begin{multline*}
[O:(\phi^{-1}O)(O\cap H)] = [O : (\phi^{-1}(O) H )\cap O] = [OH:\phi^{-1}(O)H] = \\ = [\pi(OH) : \pi ( \phi^{-1}(O)H )] = [\pi O:\pi (\phi^{-1}O)] = [\pi O: {\bar \phi}^{\ -1}(\pi O)].
\end{multline*}

By our assumption, $G/H$ is a strongly compactly covered group. As $\pi O$ is an open normal subgroup of $G/H$, such that ${\bar \phi}^{\ -1}(\pi O)\leq \pi O$ and $[\pi O:{\bar \phi}^{\ -1}(\pi O)]<\infty$ by Equation \eqref{twoind}, Proposition \ref{LFcoralg} applied to $G/H$ and to $\bar \phi$ implies that
\begin{equation}\label{2:add:th:eq:2}
h_{alg} (\bar \phi) \geq \log [\pi O:{\bar \phi}^{\ -1}(\pi O)].
\end{equation}
Summing up Equation \eqref{2:add:th:eq:1} and Equation \eqref{2:add:th:eq:2}, and using Equation \eqref{twoind}, we obtain 
\[h_{alg} (\bar \phi)+h_{alg} (\phi \restriction_H) \geq \log [O:\phi^{-1}O].\] 
By the arbitrariness of $O$ and applying Proposition \ref{LFcoralg} to $G$ we conclude that $h_{alg} (\bar \phi) + h_{alg} (\phi \restriction_H) \geq h_{alg} (\phi)$.
\end{proof}

\begin{remark}
For the subclass of compactly covered locally compact abelian  groups  Theorem  \ref{addthm} admits an alternative proof. Indeed, one can  combine the Addition Theorem proved in \cite{GBV} for the topological entropy of continuous endomorphisms of locally compact totally disconnected groups with the Bridge Theorem proved in \cite{DGB} (see Theorem \ref{BT}).
\end{remark}

The next corollary shows that when we compute the algebraic entropy of a topological \emph{automorphism}  of a strongly compactly covered group $G$, we may assume that $G$ is also totally disconnected, i.e., its connected component $c(G)$ is trivial. 

\begin{corol}\label{c(G)}
Let $G$ be a strongly compactly covered group, and $\phi \in \End(G)$ be such that $c(G)$ is $\phi$-stable and contains $\ker \phi$.  Then  $h_{alg} (\phi) =h_{alg} (\bar \phi)$, where $\bar \phi : G/c(G) \to G/c(G)$ is the induced map. 

In particular, if $\phi\in \Aut(G)$,  then $c(G)$ is $\phi$-stable and $h_{alg} (\phi) =h_{alg} (\bar \phi)$.
\end{corol}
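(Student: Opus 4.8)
The plan is to derive the corollary directly from the Addition Theorem (Theorem \ref{addthm}) applied with $H = c(G)$, after observing that the restriction term vanishes. First I would verify the hypotheses of Theorem \ref{addthm}: the connected component $c(G)$ of the identity is a closed characteristic subgroup of $G$, hence closed and normal; it is $\phi$-stable by assumption; and $\ker\phi \leq c(G)$ by assumption. Thus Theorem \ref{addthm} applies and yields
\[
h_{alg}(\phi) = h_{alg}(\bar\phi) + h_{alg}(\phi\restriction_{c(G)}).
\]
It therefore suffices to prove that $h_{alg}(\phi\restriction_{c(G)}) = 0$.

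The key observation is that $c(G)$ is in fact compact. By Lemma \ref{cor:cclcstable}, the closed subgroup $c(G)$ is again strongly compactly covered, so by Proposition \ref{cclca:structure}(3) there is $N \in \mathcal N(c(G))$ with $e_G \in N$. As an open subgroup, $N$ is clopen in $c(G)$; since $c(G)$ is connected and $N$ is nonempty, this forces $N = c(G)$. Hence $c(G) = N$ is compact. Because the algebraic entropy vanishes on compact groups (as observed just after the definition of $h_{alg}$), we get $h_{alg}(\phi\restriction_{c(G)}) = 0$, and therefore $h_{alg}(\phi) = h_{alg}(\bar\phi)$, as claimed.

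For the ``in particular'' statement with $\phi \in \Aut(G)$, I would check that the general hypotheses hold automatically. A topological automorphism is a homeomorphism fixing $e_G$, hence preserves the connected component of the identity: indeed $\phi(c(G))$ is a connected subgroup containing $e_G$, so $\phi(c(G)) \leq c(G)$, and applying the same argument to $\phi^{-1}$ gives the reverse inclusion, whence $c(G)$ is $\phi$-stable. Moreover $\ker\phi = \{e_G\} \leq c(G)$ trivially. Thus the first part of the corollary applies and gives $h_{alg}(\phi) = h_{alg}(\bar\phi)$.

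The only genuinely non-routine point — and the step I expect to be the main obstacle — is the reduction to compactness of $c(G)$. Everything else is a direct verification of hypotheses, but recognizing that strong compact coverage together with connectedness forces $c(G)$ itself to be the (compact, open, normal) subgroup $N$ is exactly what makes the restriction term collapse to zero.
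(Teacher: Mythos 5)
Your proof is correct and follows essentially the same route as the paper: apply Theorem \ref{addthm} with $H=c(G)$ and kill the restriction term by showing $c(G)$ is compact. The only difference is in how compactness of $c(G)$ is justified: the paper simply cites a lemma of Bagley--Wu--Yang asserting that the connected component of any compactly covered locally compact group is compact, whereas you give a short self-contained argument (via Lemma \ref{cor:cclcstable} and Proposition \ref{cclca:structure}, a nonempty clopen subgroup of the connected group $c(G)$ must be all of $c(G)$) that exploits the stronger ``strongly compactly covered'' hypothesis; your version avoids the external reference at no real cost, while the cited lemma is the more general fact.
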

\begin{proof}
As $G$ is compactly covered by Proposition \ref{cclca:structure}, the connected component $c(G)$ is compact by \cite[Lemma 2.15]{BWY}, so
$h_{alg} (\phi \restriction_{c(G)})=0$. Applying Theorem \ref{addthm} we get \[h_{alg} (\phi) =h_{alg} (\bar \phi) + h_{alg} (\phi \restriction_{c(G)})=h_{alg} (\bar \phi).\]  For the last assertion use the fact that $c(G)$ is a characteristic subgroup of $G$. 
\end{proof}

\subsection*{Acknowledgments}
This work is supported by Programma SIR 2014 by MIUR, project GADYGR, number RBSI14V2LI, cup G22I15000160008 and by INdAM - Istituto Nazionale di Alta Matematica.

\end{document}